\newtheorem{thm}{Theorem}
\newtheorem{lem}{Lemma}
\newtheorem{prop}{Proposition}
\newtheorem{ex}{Example}
\newtheorem{definition}{Definition}
\newtheorem{remark}{Remark}
\title{Combinatorics of type $D$ exceptional sequences}
\author{Emily Carrick}
\author{Alexander Garver}
\begin{document}


\begin{abstract}{Exceptional sequences are important sequences of quiver representations in the study of representation theory of algebras. They are also closely related to the theory of cluster algebras and the combinatorics of Coxeter groups. We combinatorially classify exceptional sequences of a family of type $D$ Dynkin quivers, and we show how our model for exceptional sequences connects to the combinatorics of type $D$ noncrossing partitions.}\end{abstract}
\keywords{quiver, exceptional sequence, noncrossing partition}





\maketitle

\section{Introduction}

Exceptional sequences are certain homologically-defined sequences of quiver representations that are useful in understanding the structure of the associated bounded derived category of quiver representations. They were first studied by Crawley-Boevey \cite{c93},  who showed that the braid group acts transitively on the set of exceptional sequences of maximal length.  This result was then generalized by Ringel \cite{r94}. Exceptional sequences have also been connected to other areas of mathematics since their invention, including the combinatorics of Coxeter groups \cite{b03,hk13,ingalls2009noncrossing} and cluster algebras \cite{is10, st13}. In particular, exceptional sequences of representations of \textit{Dynkin quivers} (i.e., quivers whose underlying graph is a simply-laced Dynkin diagram) are in bijection with saturated chains in the corresponding lattice of noncrossing partitions that contain the minimal element \cite{ingalls2009noncrossing}.

Although exceptional sequences are very well studied, they have only been combinatorially classified when the quiver is a type $A$ Dynkin quiver \cite{a13, garver2015combinatorics}. In this paper, we present a combinatorial classification of the exceptional sequences of representations of the type $D_n$ Dynkin quivers $Q^n$ appearing in Figure \ref{fig:1}.
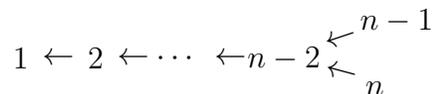
\begin{figure}[!htbp]
$$\begin{tikzpicture}
	\node[inner sep=0.5, fill=none, label=center:{1}](n1) at (0,0) {};
	\node[inner sep=0.5, fill=none, label=center:{2}](n2) at (1,0) {};
	\node[inner sep=0.5, fill=none](n6) at (2,0) {};
	\node[inner sep=0.5, fill=none](n7) at (2.1,0) {};
	\node[inner sep=1, fill=none, label=center:{$n-2$}](n3) at (3.5,0) {};
	\node[inner sep=0.5, fill=none, label=center:{$n-1$}](n4) at (5,0.5) {};
	\node[inner sep=0.5, fill=none, label=left:{$n$}](n5) at (5,-0.4) {};
	\path (n1)--(n2) node [midway, sloped] {$\leftarrow$};
	\path (n2)--(n6) node [midway] {$\leftarrow$};
	\path (n6)--(n7) node [midway] {$\cdots$};
	\path (n7)--(n3) node [midway, sloped] {$\leftarrow$};
	\path (n3)--(n4) node [midway, sloped] {$\leftarrow$};
	\path (n3)--(n5) node [midway, sloped] {$\leftarrow$};
\end{tikzpicture}$$
\caption{The type $D_n$ Dynkin quiver $Q^n$.}
\label{fig:1}
\end{figure}

For our purposes, we will define exceptional sequences as sequences of quiver representations, but they can equivalently be defined as sequences of objects in the bounded derived category of quiver representations. Since any two Dynkin quivers with the same underlying Dynkin diagram have triangle equivalent bounded derived categories of representations \cite{dieter1988triangulated}, to classify the exceptional sequences of one orientation of a Dynkin diagram is to classify them for them all orientations of the same Dynkin diagram (see Remark~\ref{der_cat_defn}). Therefore, our main result (Theorem~\ref{thm:2}) gives a combinatorial description of all exceptional sequences of objects in the bounded derived category of a type $D$ Dynkin quiver.

We classify exceptional sequences by showing that they are equivalent to certain sequences of curves on a punctured disk that we call \textit{exceptional sequences of curves}; see Section~\ref{sec_proof_idea} for the precise definition. After explaining the necessary background on exceptional sequences in Section~\ref{sec:exc_sequences}, we show how indecomposable representations of $Q^n$ may be thought of as pairs of curves on this punctured disk in Section~\ref{sec:geometric_model}.  We remark that the model we present Section~\ref{sec:geometric_model} is similar to the geometric model for cluster categories of type $D$ constructed by Schiffler \cite{schiffler2008geometric}. We present our classification of exceptional sequences in Section~\ref{sec_proof_idea}, and we interpret our model in terms of noncrossing partitions of type $D$ in Section~\ref{sec:noncrossing}. We prove our main results in Sections~\ref{sec_prop_1_proof}, \ref{sec_thm_1_2_proof}, and \ref{sec_thm_3_proof}.


\section{Exceptional sequences}\label{sec:exc_sequences}

A \textit{quiver} $Q$ is a 4-tuple $(Q_0,Q_1,s,t)$, where $Q_0$ is a set of \textit{vertices}, $Q_1$ is a set of \textit{arrows}, and $s, t:Q_1 \to Q_0$ are two functions defined so that for every $a \in Q_1$, we have $s(a) \xrightarrow{a} t(a)$. A \textit{representation} $V = ((V_k)_{k \in Q_0}, (f_a)_{a \in Q_1})$ of a quiver $Q$  is an assignment of a finite dimensional $\mathbb{K}$-vector space $V_k$ to each vertex $k$ and a $\mathbb{K}$-linear map $f_a: V_{s(a)} \rightarrow V_{t(a)}$ to each arrow $a$ where $\mathbb{K}$ is a field.  The \textit{dimension vector} of $V$ is the vector $\textbf{dim}(V):=(\dim V_k)_{k\in Q_0}$. The \textit{dimension} of $V$ is defined as $\text{dim}(V) = \sum_{k \in Q_0}\dim V_k.$

Given representations $V = ((V_k)_{k}, (f_a)_{a})$ and $W  = ((W_k)_{k}, (g_a)_{a})$, a \textit{morphism} $\theta : V \rightarrow W$ is a collection of linear maps $\theta_k : V_k \rightarrow W_k$ where $\theta_{t(a)} \circ f_a = g_a \circ \theta_{s(a)}$ for all $a \in Q_1$. We let $\text{Hom}(V,W)$ denote the $\mathbb{K}$-vector space of all morphisms from $V$ to $W$.

We remark that the category of all representations of $Q$ is equivalent to the category of finitely generated left modules over the path algebra of $Q$. This implies that there is a well-defined $\text{Ext}$-functor, denoted $\text{Ext}^k(-,-)$, for each $k \ge 1$ on the category of representations of $Q$.

In Figure~\ref{fig_first_repn_ex}, we show two examples of representations of the quiver $Q^5.$ We now define exceptional sequences of representations, which are our main object of study.

\begin{figure}[!htbp]
$$\includegraphics[scale=1]{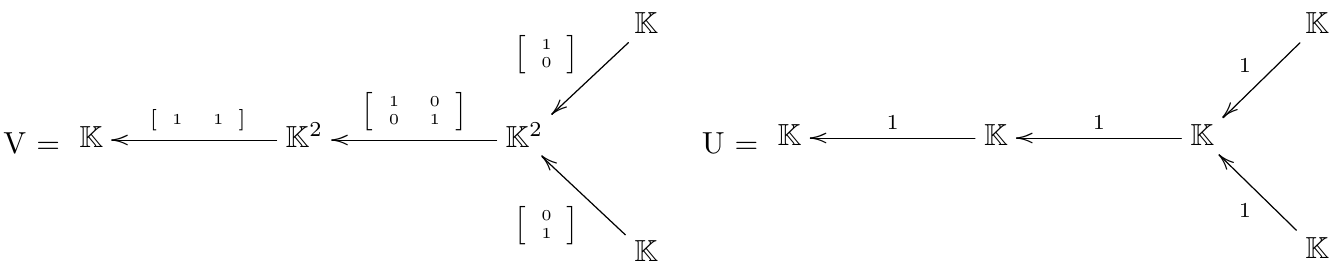}$$
\caption{Two examples of indecomposable representations of $Q^5.$}
\label{fig_first_repn_ex}
\end{figure}

\begin{definition}
An \textit{exceptional sequence} is a sequence of representations $(V^1,\ldots,V^\ell)$ of a quiver $Q$ that satisfies the following:
\begin{enumerate}
	\item[a)] $\text{Hom}(V^i,V^i)$ is a division algebra and $\text{Ext}^k(V^i,V^i)=0$ for all $k\geq1$ and all $i$, and
	\item[b)] $\text{Hom}(V^i,V^j)=0$ and $\text{Ext}^k(V^i,V^j)=0$ for all $k \ge 1$ and all pairs $i>j$.
\end{enumerate}
We say that a set of representations $\{V^1, \ldots, V^\ell\}$ is an \textit{exceptional collection} of representations if they may be totally ordered in some way so that they form an exceptional sequence.
\end{definition}

If $(V^1,\ldots,V^\ell)$ is an exceptional sequence, then property a) implies that each representation $V^i$ is indecomposable. Furthermore, in this paper, since we will only work with the type $D$ quivers $Q^n$, any representations $U$ and $V$ of $Q^n$ satisfy
\begin{itemize}
\item $\text{Ext}^k(U,V) = 0$ for all $k \ge 2,$ and 
\item $\text{Ext}^1(U,V) \simeq \text{D}\text{Hom}(V,\tau U)$
\end{itemize}
where $\tau$ is the \textit{Auslander--Reiten translation} and $\text{D}:= \text{Hom}_{\mathbb{K}}(-,\mathbb{K})$. We omit the homological definition of $\tau$, but we give a geometric description of it in Section~\ref{sec:geometric_model}. From these observations, to understand the exceptional sequences of $Q^n$, it is enough to understand the spaces of morphisms between all indecomposable representations of $Q^n$. 

We henceforth assume that $\mathbb{K}$ is an algebraically closed field. Therefore, Gabriel's Theorem implies that the indecomposable representations of $Q^n$ are in one-to-one correspondence with the positive roots of the root system corresponding to $Q^n$, under the map sending a representation to its dimension vector \cite{gabriel1972unzerlegbare}. To understand the morphisms between the indecomposables, we use the Auslander--Reiten quiver of $Q^n.$ We show the Auslander--Reiten quiver of $Q^5$ in Figure~\ref{D5_ar_fig}.

By definition, the \textit{Auslander--Reiten quiver} of $Q$ is the quiver with vertices indexed by the isomorphism classes of indecomposable representations of $Q$ and arrows indexed by a basis of the space of irreducible morphisms between the corresponding representations.  Since there are only finitely many isomorphism classes of indecomposable representations of $Q^n$, any morphism $\theta: U \to V$ between two indecomposable representations may be expressed as a sum of compositions of irreducible morphisms where the compositions correspond to paths  from $U$ to $V$ in the Auslander--Reiten quiver. 

It is also possible to calculate the Auslander--Reiten translation of a representation using the Auslander--Reiten quiver.we show how the Auslander--Reiten translation acts on indecomposable representations in Figure~\ref{D5_ar_fig}.



\begin{figure}
$$\includegraphics[scale=1]{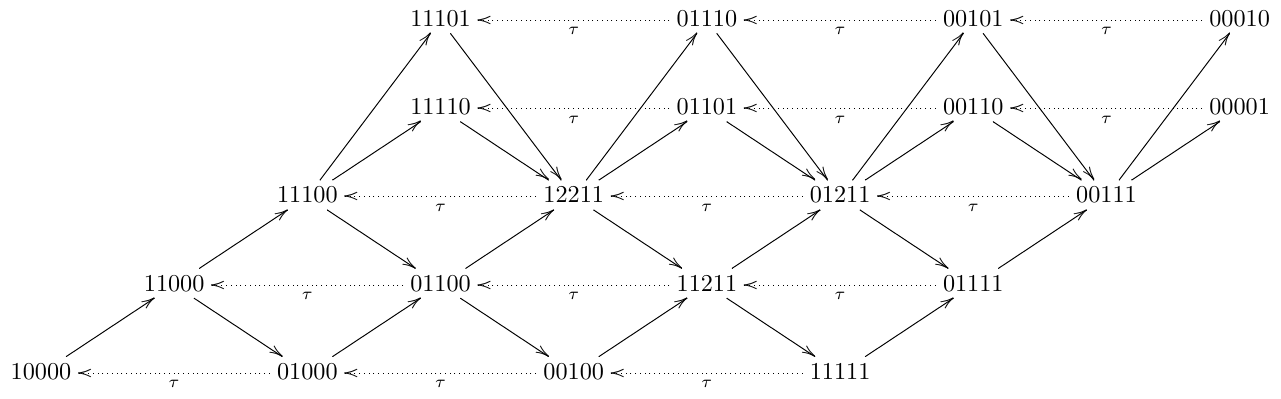}$$
\caption{The Auslander--Reiten quiver of $Q^5$. Its arrows are the solid black arrows. The dotted arrows show the action of $\tau$ on indecomposables. The leftmost indecomposables are sent to zero by $\tau$.}
\label{D5_ar_fig}
\end{figure}

\begin{ex}
The sequence $(V,U)$ is an exceptional sequence, but $(U,V)$ is not where $V$ and $U$ are the representations in Figure~\ref{fig_first_repn_ex}. One verifies this using the Auslander--Reiten quiver of $Q^5$. 
\end{ex}


\section{Curves on a punctured disk}\label{sec:geometric_model}

In this section, we introduce the geometric model that we use to classify type $D$ exceptional sequences. 

Consider the collection of $2n-1$ points in the plane such that the convex hull $2n-2$ of these points is a regular $(2n-2)$-gon and the remaining point is the centroid of this $(2n-2)$-gon. Label the vertices of the $(2n-2)$-gon with $1,2,\ldots,n-1,-1,-2,\ldots,-(n-1)$ in clockwise order, and the centroid with both $n$ and $-n$. Let $\Sigma_n$ denote the data of these labeled points. Equivalently, we think of $\Sigma_n$ as a disk with $2n-2$ marked points on its boundary and one interior marked point thought of as a puncture.


Let $\gamma:[0,1]\rightarrow \Sigma_n$ denote a geodesic curve whose endpoints are $i$ and $j$ such that $j\neq -i$ and $\gamma'$ the geodesic curve whose endpoints are $-i$ and $-j$. Define an equivalence relation on all geodesic curves on $\Sigma_n$ where $\gamma$ and $\gamma'$ are equivalent, and let $[\gamma]=\{\gamma,\gamma'\}$ denote the equivalence class containg $\gamma$. Let $\text{Eq}(\Sigma_n)$ denote the collection of all such equivalence classes of curves on $\Sigma_n$.  Given $[\gamma] \in \text{Eq}(\Sigma_n)$, we say $\gamma\in[\gamma]$ is the \textit{essential curve} of $[\gamma]$ if it has the smallest positive endpoint of either of the curves in $[\gamma].$ {In Figure~\ref{fig_gamma_gamma_bar}, we show two examples of elements of $\text{Eq}(\Sigma_n)$. The essential curves of these classes appear in blue.}


When $[\gamma] \in \text{Eq}(\Sigma_n)$ contains a curve connecting $i$ and $-n$, we will draw this curve with a solid dot in its interior. In this case, we let $[\overline{\gamma}] = \{\overline{\gamma}, \overline{\gamma}^\prime\}$ denote the equivalence class whose curves connect $i$ to $n$ and $-i$ to $-n$. {We set $ [\overline{\overline{\gamma}}] := [\gamma]$, and $[\overline{\delta}] := [\delta]$ for any $[\delta] \in \text{Eq}(\Sigma_n)$ where neither $\delta$ nor $\delta^\prime$ have $\pm n$ as an endpoint.} 
In Figure~\ref{fig_gamma_gamma_bar}, we show an example of two such an equivalence classes $[\gamma]$ and $[\overline{\gamma}]$. 

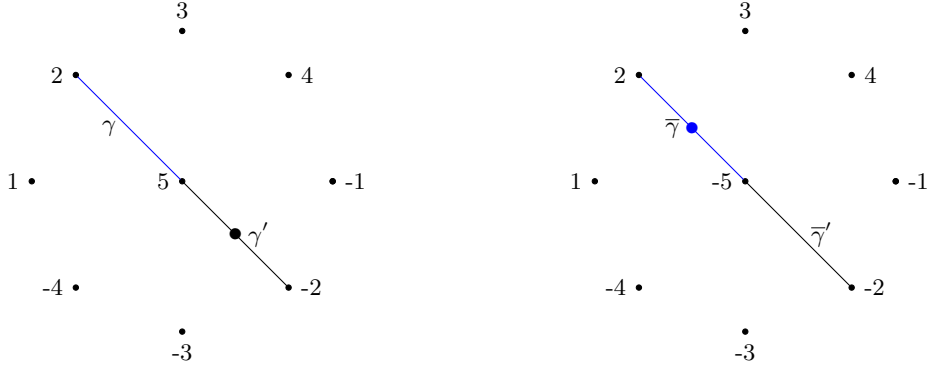
\begin{figure}
\centering
\begin{subfigure}
\centering
\hfill
\begin{tikzpicture}
\draw [fill] circle (1pt);
	
	\foreach \a in {0,45,...,360} {
		\draw[fill] (\a:2cm) circle (1pt); 
		}
		
\node[inner sep=0.5,fill=none,label=left:{\scriptsize{5}}]at (0:0) {};
\node[inner sep=0.5,fill=none,label=left:{\scriptsize{1}}]at (180:2) {};
\node[inner sep=0.5,fill=none,label=right:{\scriptsize{4}}]at (45:2) {};
\node[inner sep=0.5,fill=none,label=above:{\scriptsize{3}}]at (90:2) {};
\node[inner sep=0.5,fill=none,label=left:{\scriptsize{2}}]at (135:2) {};
\node[inner sep=0.5,fill=none,label=right:{\scriptsize{-1}}]at (180:-2) {};
\node[inner sep=0.5,fill=none,label=left:{\scriptsize{-4}}]at (45:-2) {};
\node[inner sep=0.5,fill=none,label=below:{\scriptsize{-3}}]at (90:-2) {};
\node[inner sep=0.5,fill=none,label=right:{\scriptsize{-2}}]at (135:-2) {};

	\node[inner sep=0.5, fill=none, label=right:{}] (a) at (135:2) {};
	\node[inner sep=0.5, fill=none, label=below:{}] (b) at (0:0) {};
	\path[every node/.style={font=\sffamily\small}]
(a) edge[color=blue] node [right] {} (b);	
	
	\node[inner sep=0.5, fill=none, label=right:{}] (-a) at (135:-2) {};
	\node[inner sep=0.5, fill=none, label=below:{}] (-b) at (0:0) {};
	\path[every node/.style={font=\sffamily\small}]
(-a) edge[color=black] node [midway] {$\bullet$} (-b);
\node[inner sep=0.5,fill=none,label=left:{\scriptsize{$\gamma$}}]at (135:1) {};
\node[inner sep=0.5,fill=none,label=right:{\scriptsize{$\gamma'$}}]at (135:-1) {};
\end{tikzpicture}
\end{subfigure}\hfill
\begin{subfigure}
\centering
\begin{tikzpicture}
\draw [fill] circle (1pt);
	
	\foreach \a in {0,45,...,360} {
		\draw[fill] (\a:2cm) circle (1pt); 
		}
\node[inner sep=0.5,fill=none,label=left:{\scriptsize{-5}}]at (0:0) {};		
\node[inner sep=0.5,fill=none,label=left:{\scriptsize{1}}]at (180:2) {};
\node[inner sep=0.5,fill=none,label=right:{\scriptsize{4}}]at (45:2) {};
\node[inner sep=0.5,fill=none,label=above:{\scriptsize{3}}]at (90:2) {};
\node[inner sep=0.5,fill=none,label=left:{\scriptsize{2}}]at (135:2) {};
\node[inner sep=0.5,fill=none,label=right:{\scriptsize{-1}}]at (180:-2) {};
\node[inner sep=0.5,fill=none,label=left:{\scriptsize{-4}}]at (45:-2) {};
\node[inner sep=0.5,fill=none,label=below:{\scriptsize{-3}}]at (90:-2) {};
\node[inner sep=0.5,fill=none,label=right:{\scriptsize{-2}}]at (135:-2) {};

	\node[inner sep=0.5, fill=none, label=right:{}] (f) at (135:2) {};
	\node[inner sep=0.5, fill=none, label=below:{}] (g) at (0:0) {};
	\path[every node/.style={font=\sffamily\small}]
	 (f) edge[color=blue] node [midway] {$\bullet$} (g);

	\node[inner sep=0.5, fill=none, label=right:{}] (-a) at (135:-2) {};
	\node[inner sep=0.5, fill=none, label=below:{}] (-b) at (0:0) {};
	\path[every node/.style={font=\sffamily\small}]
	(-a) edge[color=black] node [right] {} (-b);
\node[inner sep=0.5,fill=none,label=left:{\scriptsize{$\overline{\gamma}$}}]at (135:1) {};
\node[inner sep=0.5,fill=none,label=right:{\scriptsize{$\overline{\gamma}'$}}]at (135:-1) {};	

\end{tikzpicture}
\hfill
\end{subfigure}
\caption{Examples of two equivalence classes $[\gamma], [\overline{\gamma}] \in \text{Eq}(\Sigma_n).$ }
\label{fig_gamma_gamma_bar}
\end{figure}

\begin{definition}\label{def:2}
Let $[\gamma] \in \text{Eq}(\Sigma_n)$ with essential curve $\gamma$. Assume that $\gamma$ has endpoints $i$ and $j$ and $i$ is its smallest positive endpoint. Define $V(\gamma)= ((V(\gamma)_k)_k, (V(\gamma)_a)_a) = ((V_k)_{k}, (f_{a})_{a})$ to be the representation of $Q^n$ whose vector spaces are as follows:
\begin{enumerate}
\item[a)] if $i \in \{1,\ldots, n-2\}$ and $j \in \{i+1,\ldots, n-1\}$, then $$\begin{array}{ccccc}V_k & := & \left\{\begin{array}{lcl} \mathbb{K} & : & k \in \{i, \ldots, j-1\} \\ 0 & : & \text{otherwise} \end{array}\right.\end{array}$$ and $$\begin{array}{ccccc}f_a & := & \left\{\begin{array}{lcl} id & : & s(a), t(a) \in \{i, \ldots, j-1\} \\ 0 & : & \text{otherwise;} \end{array}\right.\end{array}$$
\item[b)] if $i \in \{1,\ldots, n-2\}$ and $j \in \{-(i+1),\ldots, -(n-1)\}$, then $$\begin{array}{ccccc}V_k & := & \left\{\begin{array}{lcl} \mathbb{K}^2 & : & k \in \{-j, \ldots, n-2\} \text{ for $-j \neq n-1$} \\ \mathbb{K} & : & k \in \{i, \ldots, -j-1\}\cup\{n-1,n\} \\ 0 & : & \text{otherwise} \end{array}\right.\end{array}$$ and $$\begin{array}{ccccc}f_a & := & \left\{\begin{array}{lcl} id & : & \dim(V_{s(a)}) = \dim(V_{t(a)}) \neq 0 \\ \left[\begin{array}{c} 1 \\ 0 \end{array}\right] & : & s(a) = n-1, t(a) = n-2 \\ \\ \left[\begin{array}{c} 0 \\ 1 \end{array}\right] & : & s(a) = n, t(a) = n-2 \\ \\ \left[\begin{array}{c} 1 \\ 1 \end{array}\right] & : & \dim(V_{s(a)}) = 2, \dim(V_{t(a)}) = 1 \\ 0 & : & \text{otherwise;} \end{array}\right.\end{array}$$ 
\item[c)] if $i \in \{1,\ldots, n-1\}$ and $j= n$ (resp., $j = -n$), then $$\begin{array}{ccccc}V_k & := & \left\{\begin{array}{lcl} \mathbb{K} & : & k \in \{i, i + 1, \ldots, n-2, n\} \text{ for $i \neq n-1$}\\ & & \text{(resp., $k \in \{i, i+1, \ldots, n-2, n-1\}$)}  \\ 0 & : & \text{otherwise} \end{array}\right.\end{array}$$ and $$\begin{array}{ccccc}f_a & := & \left\{\begin{array}{lcl} id & : & \dim(V_{s(a)}) = \dim(V_{t(a)}) = 1 \\ 0 & : & \text{otherwise.}\end{array}\right.\end{array}$$

\end{enumerate} The representation $V(\gamma)$ is defined above using the information of the essential curve $\gamma$ from the equivalence class $[\gamma] = \{\gamma, \gamma^\prime\}.$ For convenience, we also define $V(\gamma^\prime) := V(\gamma)$. 
\end{definition}

\begin{lem}
The map $[\gamma] \mapsto V(\gamma)$ defines a bijection between equivalence classes of curves on $\Sigma_n$ and the indecomposable representations of $Q^n.$
\end{lem}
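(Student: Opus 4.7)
The plan is to verify the map is well-defined, identify each $V(\gamma)$ with an indecomposable by matching its dimension vector to a positive root of $D_n$, and complete the bijection via a cardinality count that invokes Gabriel's theorem.

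I would first observe that each equivalence class $[\gamma]$ has a unique essential curve (the one achieving the smallest positive endpoint over both representatives), so $V(\gamma)$ is well-defined by Definition~\ref{def:2}, and the convention $V(\gamma'):=V(\gamma)$ handles the other representative. I would then compute $\textbf{dim}(V(\gamma))$ in each case. Writing the simple roots of $D_n$ as $\alpha_k=\epsilon_k-\epsilon_{k+1}$ for $k\le n-1$ and $\alpha_n=\epsilon_{n-1}+\epsilon_n$, case (a) yields the positive root $\epsilon_i-\epsilon_j$ for $1\le i<j\le n-1$; case (b) yields $\epsilon_i+\epsilon_{-j}$ for $1\le i<-j\le n-1$, matching the expansion $\alpha_i+\cdots+\alpha_{-j-1}+2(\alpha_{-j}+\cdots+\alpha_{n-2})+\alpha_{n-1}+\alpha_n$; and case (c) yields $\epsilon_i+\epsilon_n$ when $j=n$ and $\epsilon_i-\epsilon_n$ when $j=-n$.

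To upgrade ``positive-root dimension vector'' to ``indecomposable'', I would verify $\text{End}(V(\gamma))\cong\mathbb{K}$. In cases (a) and (c), $V(\gamma)$ is an interval representation of a type-$A$ subquiver of $Q^n$, hence indecomposable by the classical argument. In case (b), I would argue that any endomorphism $\theta$ must act as a common scalar $c$ on each $\mathbb{K}$-block (from commuting with the identity maps between adjacent blocks of equal dimension), and on each $\mathbb{K}^2$-block as a $2\times 2$ matrix $M$ that commutes with the three structure maps $(1,0)^T$, $(0,1)^T$, and $(1,1)^T$ emanating from the fork at vertex $n-2$. Since these three vectors share no common eigenline in $\mathbb{K}^2$, we must have $M=c\cdot\text{Id}$, forcing $\text{End}(V(\gamma))=\mathbb{K}$ and hence indecomposability.

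Injectivity then follows because the dimension vector recovers both the case and the endpoints of the essential curve: the first and last nonzero positions and the pattern of $\mathbb{K}$ versus $\mathbb{K}^2$ pin down $(i,j)$ uniquely. Surjectivity is immediate from counting: case (a) contributes $\binom{n-1}{2}$ classes, case (b) contributes $\binom{n-1}{2}$, and case (c) contributes $2(n-1)$, summing to $n(n-1)$, which equals the number of positive roots of $D_n$ and hence, by Gabriel's theorem, the number of iso classes of indecomposables. The main obstacle is the indecomposability verification in case (b); the essential algebraic input is that no nontrivial $2\times 2$ matrix simultaneously fixes the three lines spanned by $(1,0)$, $(0,1)$, $(1,1)$, which geometrically reflects the fork structure at vertex $n-2$ of the $D_n$ Dynkin diagram.
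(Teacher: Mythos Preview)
Your proof is correct and follows essentially the same route as the paper: both establish injectivity by showing distinct equivalence classes yield distinct dimension vectors, and both invoke Gabriel's theorem for the bijection with indecomposables. Your version is more thorough---the paper simply asserts surjectivity is ``straightforward'' and omits any verification that the $V(\gamma)$ are indecomposable, whereas you supply the endomorphism-ring computation (with one minor slip: the map $(1,1)$ in case~(b) sits at vertex $-j$ as a row vector $\mathbb{K}^2\to\mathbb{K}$, not at the fork at $n-2$, though your conclusion about the three lines forcing $M=c\cdot\mathrm{Id}$ is correct).
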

\begin{proof}
It straightforward to check that every indecomposable representation of $Q^n$ is of the form $V(\gamma)$ for some $[\gamma] \in \text{Eq}(\Sigma_n)$. It therefore suffices to show that the map $[\gamma] \mapsto V(\gamma)$ is injective. A case-by-case check shows that given two distinct equivalences $[\gamma], [\delta] \in \text{Eq}(\Sigma_n)$, the representations $V(\gamma)$ and $V(\delta)$ have distinct dimension vectors. Therefore, $V(\gamma) \not \simeq V(\delta)$.
\end{proof}




\begin{ex}\label{commuting_curves_fig}
Pairs of equivalence classes $[\gamma]$ and $[\overline{\gamma}]$ such as the two appearing in Figure~\ref{fig_gamma_gamma_bar} are special in that both of the sequences $(V(\gamma), V(\overline{\gamma}))$ and $(V(\overline{\gamma}), V({\gamma}))$ are exceptional. This can be verified using the Auslander--Reiten quiver in Figure~\ref{D5_ar_fig}. This phenomenon differs from the second author's previous work where only curves that did not intersect corresponded to representations forming an exceptional sequence in either order \cite{garver2015combinatorics}.
\end{ex}

\begin{ex}\label{proj_and_inj_repns}
Figure~\ref{fig_proj_inj} shows the equivalence classes of curves corresponding to the indecomposable projective representations of $Q^5$ on the left and the equivalence classes of curves corresponding to the indecomposable injective representations of $Q^5$ on the right. We have not drawn the curves connected to the $-5$ as geodesics.
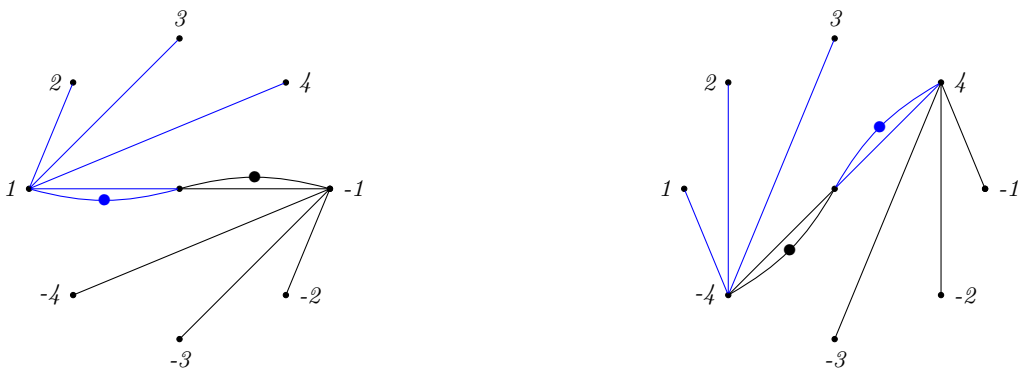
\begin{figure}[!htbp]
\centering
\begin{subfigure}%
\centering
\hfill
\begin{tikzpicture}
	\draw [fill] circle (1pt);
	
	\foreach \a in {0,45,...,360} {
		\draw[fill] (\a:2cm) circle (1pt); 
		}
\node[inner sep=0.5,fill=none,label=left:{\scriptsize{1}}](a)at (180:2) {};
\node[inner sep=0.5,fill=none,label=right:{\scriptsize{4}}](d)at (45:2) {};
\node[inner sep=0.5,fill=none,label=above:{\scriptsize{3}}](c)at (90:2) {};
\node[inner sep=0.5,fill=none,label=left:{\scriptsize{2}}](b)at (135:2) {};
\node[inner sep=0.5,fill=none,label=right:{\scriptsize{-1}}](e)at (180:-2) {};
\node[inner sep=0.5,fill=none,label=left:{\scriptsize{-4}}](h)at (45:-2) {};
\node[inner sep=0.5,fill=none,label=below:{\scriptsize{-3}}](g)at (90:-2) {};
\node[inner sep=0.5,fill=none,label=right:{\scriptsize{-2}}](f)at (135:-2) {};
\node[inner sep=0.5,fill=none](i)at (0:0) {};
	
	\path[every node/.style={font=\sffamily\small}]
(a) edge[color=blue] node [midway] {} (b);	
\path[every node/.style={font=\sffamily\small}]
(a) edge[color=blue] node [midway] {} (c);
\path[every node/.style={font=\sffamily\small}]
(a) edge[color=blue] node [midway] {} (d);
\path[every node/.style={font=\sffamily\small}]
(a) edge[color=blue] node [midway] {} (i);	
\path[every node/.style={font=\sffamily\small}]
(a) edge[bend right=15, color=blue] node [midway] {$\bullet$} (i);			
	\path[every node/.style={font=\sffamily\small}]
(e) edge[color=black] node [midway] {} (f);	
\path[every node/.style={font=\sffamily\small}]
(e) edge[color=black] node [midway] {} (g);
\path[every node/.style={font=\sffamily\small}]
(e) edge[color=black] node [midway] {} (h);
\path[every node/.style={font=\sffamily\small}]
(e) edge[color=black] node [midway] {} (i);	
\path[every node/.style={font=\sffamily\small}]
(e) edge[bend right=15, color=black] node [midway] {$\bullet$} (i);
	\end{tikzpicture}
	\hfill
\end{subfigure}
\centering
\begin{subfigure}%
\centering\hfill
\begin{tikzpicture}
	\draw [fill] circle (1pt);
	
	\foreach \a in {0,45,...,360} {
		\draw[fill] (\a:2cm) circle (1pt); 
		}
\node[inner sep=0.5,fill=none,label=left:{\scriptsize{1}}](a)at (180:2) {};
\node[inner sep=0.5,fill=none,label=right:{\scriptsize{4}}](d)at (45:2) {};
\node[inner sep=0.5,fill=none,label=above:{\scriptsize{3}}](c)at (90:2) {};
\node[inner sep=0.5,fill=none,label=left:{\scriptsize{2}}](b)at (135:2) {};
\node[inner sep=0.5,fill=none,label=right:{\scriptsize{-1}}](e)at (180:-2) {};
\node[inner sep=0.5,fill=none,label=left:{\scriptsize{-4}}](h)at (45:-2) {};
\node[inner sep=0.5,fill=none,label=below:{\scriptsize{-3}}](g)at (90:-2) {};
\node[inner sep=0.5,fill=none,label=right:{\scriptsize{-2}}](f)at (135:-2) {};
\node[inner sep=0.5,fill=none](i)at (0:0) {};
	
	\path[every node/.style={font=\sffamily\small}]
(h) edge[color=blue] node [midway] {} (a);	
\path[every node/.style={font=\sffamily\small}]
(h) edge[color=blue] node [midway] {} (b);
\path[every node/.style={font=\sffamily\small}]
(h) edge[color=blue] node [midway] {} (c);
\path[every node/.style={font=\sffamily\small}]
(i) edge[color=blue] node [midway] {} (d);	
\path[every node/.style={font=\sffamily\small}]
(i) edge[bend left=15, color=blue] node [midway] {$\bullet$} (d);			
	\path[every node/.style={font=\sffamily\small}]
(d) edge[color=black] node [midway] {} (e);	
\path[every node/.style={font=\sffamily\small}]
(d) edge[color=black] node [midway] {} (f);
\path[every node/.style={font=\sffamily\small}]
(d) edge[color=black] node [midway] {} (g);
\path[every node/.style={font=\sffamily\small}]
(h) edge[color=black] node [midway] {} (i);	
\path[every node/.style={font=\sffamily\small}]
(h) edge[bend right=15, color=black] node [midway] {$\bullet$} (i);
	\end{tikzpicture}
\hfill
\end{subfigure}
\caption{The essential curves in each equivalence class appear in blue.}
\label{fig_proj_inj}
\end{figure}

\end{ex}



Using our geometric model for indecomposable representations of $Q^n$, we can describe other representation theoretic objects related to $Q^n$. Given a curve $\gamma$ on $\Sigma_n$ that connects $i, j \in \{\pm 1, \ldots, \pm(n-1)\}$, define $\varrho(\gamma)$ to be the curve whose endpoints are immediately counterclockwise along the boundary of $\Sigma_n$ from $i$ and $j$. If $\gamma$ is a curve on $\Sigma_n$ that connects $i \in \{\pm 1, \ldots, \pm(n-1)\}$ and $j \in \{\pm n\}$, define $\varrho(\gamma)$ to be the connecting $-j$ and the point on the boundary of $\Sigma_n$ immediately counterclockwise from $i$. 

We show examples of how $\varrho$ acts on elements of $\text{Eq}(\Sigma_n)$ in Figure~\ref{fig:varrho}. We use the map $\varrho$ to describe how the Auslander--Reiten translation acts on indecomposable  representations and, therefore, on all representations of $Q^n$.

\begin{figure}
    \centering
    \hfill
\begin{subfigure}
\centering
\begin{tikzpicture}
\draw [fill] circle (1pt);
	
	\foreach \a in {0,45,...,360} {
		\draw[fill] (\a:2cm) circle (1pt); 
		}
		\node[inner sep=0.5,fill=none,label=left:{\scriptsize{1}}]at (180:2) {};	
		
		\node[inner sep=0.5,fill=none,label=left:{\scriptsize{1}}]at (180:2) {};
\node[inner sep=0.5,fill=none,label=right:{\scriptsize{4}}]at (45:2) {};
\node[inner sep=0.5,fill=none,label=above:{\scriptsize{3}}]at (90:2) {};
\node[inner sep=0.5,fill=none,label=left:{\scriptsize{2}}]at (135:2) {};
\node[inner sep=0.5,fill=none,label=right:{\scriptsize{-1}}]at (180:-2) {};
\node[inner sep=0.5,fill=none,label=left:{\scriptsize{-4}}]at (45:-2) {};
\node[inner sep=0.5,fill=none,label=below:{\scriptsize{-3}}]at (90:-2) {};
\node[inner sep=0.5,fill=none,label=right:{\scriptsize{-2}}]at (135:-2) {};
		
	\node[inner sep=0.5, fill=none, label=right:{}] (a) at (135:2) {};
	\node[inner sep=0.5, fill=none, label=right:{}] (b) at (225:2) {};
	\node[inner sep=0.5, fill=none, label=right:{}] (c) at (180:2) {};
	\node[inner sep=0.5, fill=none, label=right:{}] (d) at (270:2) {};
	\node[inner sep=0.5, fill=none, label=below:{}] (e) at (0:0) {};
			
	\path[every node/.style={font=\sffamily\small}] 
	(a) edge[color=blue] node [right] {} (b);
	\path[every node/.style={font=\sffamily\small}] 
	(c) edge[densely dotted, color=red] node [right] {} (d);
	
	\node[inner sep=0.5, fill=none, label=right:{}] (a) at (135:-2) {};
	\node[inner sep=0.5, fill=none, label=right:{}] (b) at (225:-2) {};
	\node[inner sep=0.5, fill=none, label=right:{}] (c) at (180:-2) {};
	\node[inner sep=0.5, fill=none, label=right:{}] (d) at (270:-2) {};
	
	\path[every node/.style={font=\sffamily\small}] 
	(a) edge[color=blue] node [right] {} (b);
	\path[every node/.style={font=\sffamily\small}] 
	(c) edge[densely dotted, color=red] node [right] {} (d);
	
	\node[inner sep=0.5, fill=none, label=left:{$\scriptsize{\gamma}$}] (a) at (180:0.9) {};
	\node[inner sep=0.5, fill=none, label=right:{$\scriptsize{\gamma'}$}] (a) at (180:-0.8) {};
	\node[inner sep=0.5, fill=none, label=right:{$\scriptsize{\varrho(\gamma')}$}] (a) at (80:1.8) {};
	\node[inner sep=0.5, fill=none, label=left:{$\scriptsize{\varrho(\gamma)}$}] (a) at (80:-1.8) {};

\end{tikzpicture}
\end{subfigure}\hfill
\begin{subfigure}
\centering
\begin{tikzpicture}
\draw [fill] circle (1pt);
	
	\foreach \a in {0,45,...,360} {
		\draw[fill] (\a:2cm) circle (1pt); 
		}
		\node[inner sep=0.5,fill=none,label=left:{\scriptsize{1}}]at (180:2) {};
		
		\node[inner sep=0.5,fill=none,label=left:{\scriptsize{1}}]at (180:2) {};
\node[inner sep=0.5,fill=none,label=right:{\scriptsize{4}}]at (45:2) {};
\node[inner sep=0.5,fill=none,label=above:{\scriptsize{3}}]at (90:2) {};
\node[inner sep=0.5,fill=none,label=left:{\scriptsize{2}}]at (135:2) {};
\node[inner sep=0.5,fill=none,label=right:{\scriptsize{-1}}]at (180:-2) {};
\node[inner sep=0.5,fill=none,label=left:{\scriptsize{-4}}]at (45:-2) {};
\node[inner sep=0.5,fill=none,label=below:{\scriptsize{-3}}]at (90:-2) {};
\node[inner sep=0.5,fill=none,label=right:{\scriptsize{-2}}]at (135:-2) {};	
		
	\node[inner sep=0.5, fill=none, label=right:{}] (a) at (45:2) {};
	\node[inner sep=0.5, fill=none, label=right:{}] (b) at (45:-2) {};
	\node[inner sep=0.5, fill=none, label=right:{}] (c) at (90:2) {};
	\node[inner sep=0.5, fill=none, label=right:{}] (d) at (90:-2) {};
	\node[inner sep=0.5, fill=none, label=below:{}] (e) at (0:0) {};
			
	\path[every node/.style={font=\sffamily\small}] 
	(a) edge[color=blue] node [right] {} (e);
	\path[every node/.style={font=\sffamily\small}] 
	(b) edge[color=blue] node [midway] {$\bullet$} (e);
	\path[every node/.style={font=\sffamily\small}] 
	(c) edge[densely dotted,color=red] node [midway] {$\bullet$} (e);
	\path[every node/.style={font=\sffamily\small}] 
	(d) edge[densely dotted, color=red] node [right] {} (e);
	
	\node[inner sep=0.5, fill=none, label=right:{$\scriptsize{\gamma}$}] (a) at (45:0.9) {};
	\node[inner sep=0.5, fill=none, label=left:{$\scriptsize{\gamma'}$}] (a) at (45:-0.9) {};
	\node[inner sep=0.5, fill=none, label=left:{$\scriptsize{\varrho(\gamma)}$}] (a) at (90:1) {};
	\node[inner sep=0.5, fill=none, label=right:{$\scriptsize{\varrho(\gamma')}$}] (a) at (90:-1) {};
\end{tikzpicture}\hfill
\end{subfigure}
    \caption{The action of $\varrho$ on elements of $\text{Eq}(\Sigma_n).$}
    \label{fig:varrho}
\end{figure}
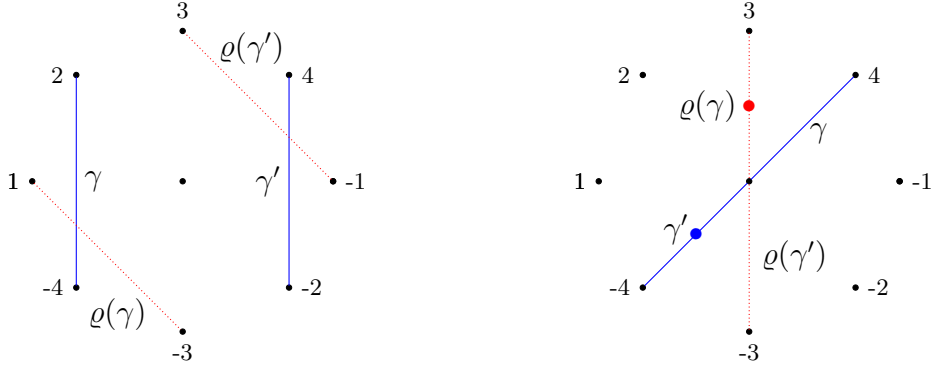

\begin{prop}\label{prop_tau_action}
The action of $\varrho$ on $\text{Eq}(\Sigma_n)$ induces an action on the indecomposable representations of $Q^n$ as follows: $$\begin{array}{ccccc}\varrho: V(\gamma) & \mapsto & V(\varrho(\gamma)) \simeq \left\{\begin{array}{cccc}
\tau V(\gamma) \text{ if } \tau V(\gamma) \neq 0\\
\nu V(\gamma) \text{ if } \tau V(\gamma) = 0\end{array}. \right. \end{array}$$
Here, the map $\nu$ is the \textit{Nakayama functor}.
\end{prop}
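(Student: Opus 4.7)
The plan is to prove the proposition by case analysis, handling the projective and non-projective indecomposables separately. The key fact I will use throughout is Gabriel's theorem: two indecomposable representations of the Dynkin quiver $Q^n$ are isomorphic if and only if they have the same dimension vector, so the verification reduces to matching $\textbf{dim}(V(\varrho(\gamma)))$ with either $\textbf{dim}(\tau V(\gamma))$ or $\textbf{dim}(\nu V(\gamma))$ as appropriate.

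First I would identify the projective and injective equivalence classes explicitly. Since vertex $1$ is the unique sink of $Q^n$, every indecomposable projective $P_k$ has $1$ in its support, and unpacking Definition~\ref{def:2} shows that the essential curves with $V(\gamma) = P_k$ are precisely the case~(a) curves from $1$ to $k+1$ (for $k \in \{1,\ldots,n-2\}$) together with the two case~(c) curves from $1$ to $\pm n$ (giving $P_{n-1}$ and $P_n$). Dually, the injectives correspond to the case~(b) curves with endpoint $-(n-1)$ together with the case~(c) curves from $n-1$ to $\pm n$. This reproduces the dictionary recorded in Example~\ref{proj_and_inj_repns}.

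For the case $\tau V(\gamma) = 0$, I verify $V(\varrho(\gamma)) \simeq \nu V(\gamma)$ by direct inspection. The map $\varrho$ sends a boundary endpoint at $1$ to $-(n-1)$ and swaps the puncture labels $n \leftrightarrow -n$, which is precisely the combinatorial transformation taking each projective curve in the list above to the corresponding injective curve $I_k = \nu P_k$, after replacing $\varrho(\gamma)$ with its essential twin when the image lies on the negative boundary. Checking this for the $n$ projectives reduces to a finite verification using Definition~\ref{def:2}.

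For the case $\tau V(\gamma) \neq 0$, I split along the three cases of Definition~\ref{def:2}. In each case I compute $\textbf{dim}(V(\varrho(\gamma)))$ from the new endpoint data and compare with $\textbf{dim}(\tau V(\gamma))$, which I extract from the Coxeter transformation of $Q^n$ (equivalently, by tracing how $\tau$ shifts dimension vectors along the AR quiver, as illustrated in Figure~\ref{D5_ar_fig}). In case~(a) with $i \geq 2$, $\varrho$ sends the curve from $i$ to $j$ to the curve from $i-1$ to $j-1$ and both sides are supported on $\{i-1,\ldots,j-2\}$. In case~(c) with $i \geq 2$, $\varrho$ toggles the puncture label and decrements the boundary endpoint, and the comparison is equally transparent. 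The main obstacle is case~(b), where after applying $\varrho$ the image may have both endpoints on the negative boundary so that one must first pass to the essential twin, and this twin may land in a different case of Definition~\ref{def:2} than $\gamma$ itself (for instance, a case~(b) curve with $i=1$ becomes a case~(a) curve on the positive boundary after twinning). Carefully enumerating these sub-cases, and checking in each that the resulting dimension vector matches $\textbf{dim}(\tau V(\gamma))$, is the technical heart of the argument; once that bookkeeping is complete, the proposition follows.
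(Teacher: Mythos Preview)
Your proposal is correct and follows essentially the same strategy as the paper: split into the projective case (handled by direct comparison with Example~\ref{proj_and_inj_repns}) and the non-projective case, then reduce the latter to a dimension-vector check via Gabriel's theorem across the three cases of Definition~\ref{def:2}. The only cosmetic difference is that the paper computes $\textbf{dim}(\tau V(\gamma))$ by explicitly writing $\tau = R_n^+ \cdots R_1^+$ as a composition of reflection functors and iterating the dimension-vector formula, whereas you invoke the Coxeter transformation; these are the same linear map on $\mathcal{K}_0(Q^n)$, so the arguments coincide.
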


\section{Classification of exceptional sequences}\label{sec_proof_idea}


In this section, we state the main theorems of our work. We first explain the conditions under which a collection of representations form an exceptional sequence in some order. We do this by giving conditions on the corresponding collection of equivalence classes of curves.

Let $[\gamma]$ and $[\delta]$ be distinct elements of $\text{Eq}(\Sigma_n)$. If $\gamma$ and $\delta$ or $\gamma$ and $\delta^\prime$ intersect in their interiors, we say that $[\gamma]$ and $[\delta]$ are \textit{crossing}. Otherwise, we say that $[\gamma]$ and $[\delta]$ are \textit{noncrossing}.


\begin{definition}
We say that two distinct equivalence classes $[\gamma], [\delta] \in \text{Eq}(\Sigma_n)$ where $[\delta] \neq [\overline{\gamma}]$ are a \textit{bad pair} if one of the following holds: 
\begin{itemize}
    \item the classes $[\gamma]$ and $[\delta]$ are crossing, or
    \item one has that $\{\text{endpoints of } [\gamma]\} = \{\text{endpoints of } [\delta]\}$.
\end{itemize}
\end{definition}
\begin{figure}[!htbp]
\centering
\begin{subfigure}%
\centering
\hfill
\begin{tikzpicture}
	\draw [fill] circle (1pt);
	
	\foreach \a in {0,45,...,360} {
		\draw[fill] (\a:2cm) circle (1pt); 
		}
\node[inner sep=0.5,fill=none,label=left:{\scriptsize{1}}]at (180:2) {};
\node[inner sep=0.5,fill=none,label=right:{\scriptsize{4}}]at (45:2) {};
\node[inner sep=0.5,fill=none,label=above:{\scriptsize{3}}]at (90:2) {};
\node[inner sep=0.5,fill=none,label=left:{\scriptsize{2}}]at (135:2) {};
\node[inner sep=0.5,fill=none,label=right:{\scriptsize{-1}}]at (180:-2) {};
\node[inner sep=0.5,fill=none,label=left:{\scriptsize{-4}}]at (45:-2) {};
\node[inner sep=0.5,fill=none,label=below:{\scriptsize{-3}}]at (90:-2) {};
\node[inner sep=0.5,fill=none,label=right:{\scriptsize{-2}}]at (135:-2) {};
\node[inner sep=0.5,fill=none,label=below:{\scriptsize{5,-5}}]at (0:0) {};
		
	\node[inner sep=0.5, fill=none, label=right:{}] (n-1) at (0:2) {};
	\node[inner sep=0.5, fill=none, label=left:{}] (-n-1) at (0:-2) {};
	\node[inner sep=0.5, fill=none, label=left:{}] at (165:2) {};
	\node[inner sep=0.5, fill=none, label=right:{}] at (-15:2) {};
	
	
	\node[inner sep=0.5, fill=none, label=right:{}] (i) at (315:2) {};
	\node[inner sep=0.5, fill=none, label=left:{}] (j) at (180:2) {};
	\draw (i)--(j) [color=blue] {};
	
	\node[inner sep=0.5, fill=none, label=left:{}] (-i) at (180:-2) {};
	\node[inner sep=0.5, fill=none, label=right:{}] (-j) at (315:-2) {};
	\draw (-i)--(-j) [color=blue] {};

	
	\node[inner sep=0.5, fill=none, label=right:{}] (k) at (180:2) {};
	\node[inner sep=0.5, fill=none, label=above:{}] (l) at (90:2) {};
	\draw (k)--(l) [color=black] {};

	
	\node[inner sep=0.5, fill=none,label={$\scriptsize{V}$}] (-l) at (225:1.4) {};
	
	\node[inner sep=0.5, fill=none, label=right:{}] (-k) at (180:-2) {};
	\node[inner sep=0.5, fill=none, label=below:{}] (-l) at (90:-2) {};
	\draw (-k)--(-l) [color=black] {};
	\end{tikzpicture}
	\hfill
\end{subfigure}
\centering
\begin{subfigure}%
\centering\hfill
\begin{tikzpicture}
	\draw [fill] circle (1pt);
	
	\foreach \a in {0,45,...,360} {
		\draw[fill] (\a:2cm) circle (1pt); 
		}
\node[inner sep=0.5,fill=none,label=left:{\scriptsize{1}}]at (180:2) {};
\node[inner sep=0.5,fill=none,label=right:{\scriptsize{4}}]at (45:2) {};
\node[inner sep=0.5,fill=none,label=above:{\scriptsize{3}}]at (90:2) {};
\node[inner sep=0.5,fill=none,label=left:{\scriptsize{2}}]at (135:2) {};
\node[inner sep=0.5,fill=none,label=right:{\scriptsize{-1}}]at (180:-2) {};
\node[inner sep=0.5,fill=none,label=left:{\scriptsize{-4}}]at (45:-2) {};
\node[inner sep=0.5,fill=none,label=below:{\scriptsize{-3}}]at (90:-2) {};
\node[inner sep=0.5,fill=none,label=right:{\scriptsize{-2}}]at (135:-2) {};
\node[inner sep=0.5,fill=none,label=below:{\scriptsize{5,-5}}]at (0:0) {};

	\node[inner sep=0.5, fill=none, label=right:{}] (n-1) at (0:2) {};
	\node[inner sep=0.5, fill=none, label=left:{}] (-n-1) at (0:-2) {};
	\node[inner sep=0.5, fill=none, label=left:{}] at (165:2) {};
	\node[inner sep=0.5, fill=none, label=right:{}] at (-15:2) {};
	
	
	\node[inner sep=0.5, fill=none, label=right:{}] (i) at (225:2) {};
	\node[inner sep=0.5, fill=none, label=left:{}] (j) at (180:2) {};
	\draw (i)--(j) [color=blue] {};
	
	\node[inner sep=0.5, fill=none, label=left:{}] (-i) at (180:-2) {};
	\node[inner sep=0.5, fill=none, label=right:{}] (-j) at (225:-2) {};
	\draw (-i)--(-j) [color=blue] {};

	
	\node[inner sep=0.5, fill=none, label=right:{}] (k) at (180:2) {};
	\node[inner sep=0.5, fill=none, label=above:{}] (l) at (45:2) {};
	\draw (k)--(l) [color=black] {};

	
	\node[inner sep=0.5, fill=none, label=right:{}] (-k) at (180:-2) {};
	\node[inner sep=0.5, fill=none, label=below:{}] (-l) at (45:-2) {};
	\draw (-k)--(-l) [color=black] {};
	
	 \node[inner sep=0.5, fill=none,label=below:{$\scriptsize{U}$}] (-l) at (190:1.5) {};
	
\end{tikzpicture}
\hfill
\end{subfigure}
\caption{Two examples of bad pairs. In blue, we show the equivalence class of curves corresponding to the representations $U$ and $V$ from Figure \ref{fig_first_repn_ex}.}
\end{figure}
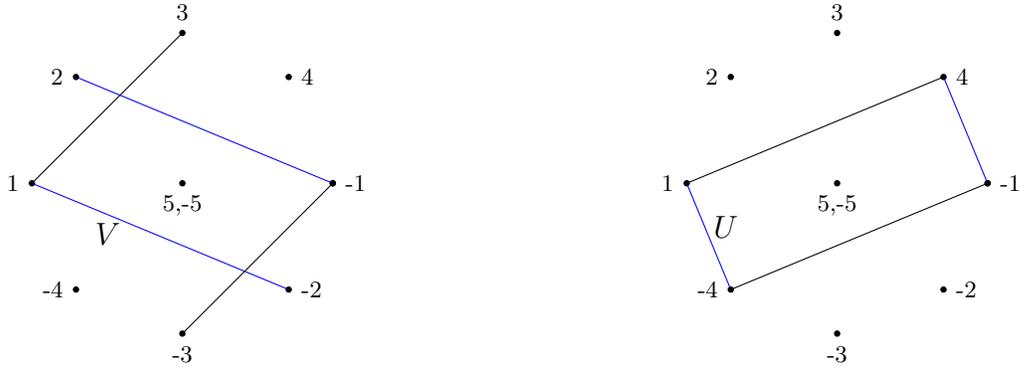






Now, let $\{[\gamma_i]\}_{i = 1}^k \subset \text{Eq}(\Sigma_n)$ with $k \le n$ be a collection equivalence classes where $\gamma_i$ is the essential curve of $[\gamma_i]$ for each $i \in \{1,\ldots, k\}$. Such a collection naturally defines a graph whose vertices are the $2n-1$ unlabeled vertices of $\Sigma_n$ and whose edges are all of the curves appearing in the equivalence classes $\{[\gamma_i]\}_{i = 1}^k$. We will refer to this as the \textit{graph determined} by $\{[\gamma_i]\}_{i = 1}^k.$

We say that $\mathcal{E} =\{[\gamma_i]\}_{i = 1}^k$ is an \textit{exceptional collection of curves} if 
\begin{itemize}
\item[(a)] classes $[\gamma_i]$ and $[\gamma_j]$ are not a bad pair for any distinct $i, j \in \{1, \ldots, k\}$,
\item[(b)] the only cycles in the graph determined by $\{[\gamma_i]\}_{i = 1}^k$ are those induced by a configuration of curves in $\{[\gamma_i]\}_{i = 1}^k$ of the form shown in Figure~\ref{fig_special_configuration} and at most one such configuration belongs to $\{[\gamma_i]\}_{i = 1}^k$, and
\item[(c)] among the curves $\gamma_1,  \ldots, \gamma_k$ there are no configurations of the form appearing in Figure~\ref{fig:forbidden_cconfiguration}.
\end{itemize}
We show two examples of an exceptional collection of curves and one collection of curves that is not exceptional in Figure~\ref{fig_exc_coll_of curves}.

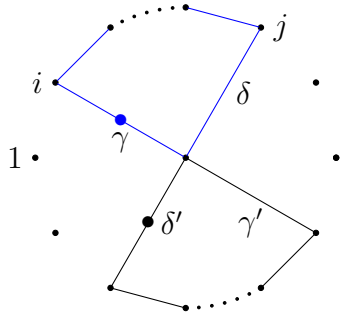
\begin{figure}[!htb]
\centering
\begin{tikzpicture}
\draw [fill] circle (1pt);
	
	\foreach \a in {0,30,...,360} {
		\draw[fill] (\a:2cm) circle (1pt); 
		}
		\foreach \a in {90,95,...,120} {
		\draw[fill] (\a:2cm) circle (0.5pt);
		}
		
		\foreach \a in {270,275,...,300} {
		\draw[fill] (\a:2cm) circle (0.5pt);
		}
\node[inner sep=0.5,fill=none,label=left:{$1$}]at (180:2) {};	

\node[inner sep=0.5,fill=none,label=left:{$i$}]at (150:2) {};	
\node[inner sep=0.5,fill=none,label=right:{$j$}]at (60:2) {};	

	\node[inner sep=0.5, fill=none, label=right:{}] (a) at (60:2) {};
	\node[inner sep=0.5, fill=none, label=right:{}] (b) at (150:2) {};
	\node[inner sep=0.5, fill=none, label=right:{}] (c) at (120:2) {};
	\node[inner sep=0.5, fill=none, label=right:{}] (d) at (90:2) {};
	\node[inner sep=0.5, fill=none, label=below:{}] (e) at (0:0) {};
	
	\path[every node/.style={font=\sffamily\small}] 
	(a) edge[color=blue] node [right] {} (e);
	\path[every node/.style={font=\sffamily\small}] 
	(b) edge[color=blue] node [midway] {$\bullet$} (e);
	\path[every node/.style={font=\sffamily\small}] 
	(c) edge[color=blue] node [right] {} (b);
	\path[every node/.style={font=\sffamily\small}] 
	(d) edge[color=blue] node [right] {} (a);
	
	\node[inner sep=0.5, fill=none, label=right:{}] (a) at (60:-2) {};
	\node[inner sep=0.5, fill=none, label=right:{}] (b) at (150:-2) {};
	\node[inner sep=0.5, fill=none, label=right:{}] (c) at (120:-2) {};
	\node[inner sep=0.5, fill=none, label=right:{}] (d) at (90:-2) {};
	\node[inner sep=0.5, fill=none, label=below:{}] (e) at (0:0) {};
	
	\path[every node/.style={font=\sffamily\small}] 
	(a) edge[color=black] node [midway] {$\bullet$} (e);
	\path[every node/.style={font=\sffamily\small}] 
	(b) edge[color=black] node [midway] {} (e);
	\path[every node/.style={font=\sffamily\small}] 
	(c) edge[color=black] node [right] {} (b);
	\path[every node/.style={font=\sffamily\small}] 
	(d) edge[color=black] node [right] {} (a);

\node[inner sep=0.5, fill=none, label=right:{$\delta$}]at (60:1) {};
	\node[inner sep=0.5, fill=none, label=below:{$\gamma$}] at (150:1) {};	
	\node[inner sep=0.5, fill=none, label=right:{$\delta'$}]at (60:-1) {};
	\node[inner sep=0.5, fill=none, label=below:{$\gamma '$}] at (150:-1) {};

\end{tikzpicture}

\caption{The only cycle configurations allowed in exceptional collections of curves, up to the action of $\varrho$ and $\overline{(\cdot)}$ on all of the curves in the configuration. Here we allow $i$ to equal $j$. If $i \neq j$, then there must be an angle strictly less than $\pi$ between $\gamma$ and $\delta$, and therefore the same must be true of $\gamma^\prime$ and $\delta^\prime$.}
    \label{fig_special_configuration}
\end{figure}

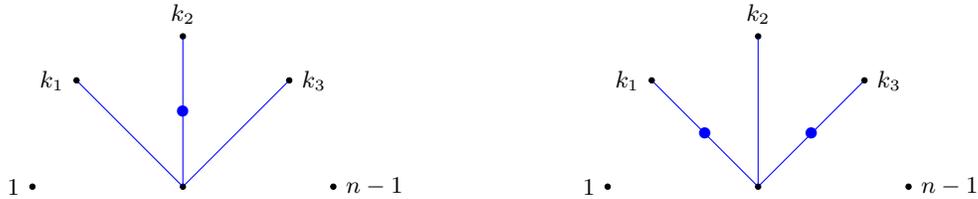
\begin{figure}[!htbp]
    \centering
    \hfill
    \begin{subfigure}
    \hfill
    \begin{tikzpicture}
    \draw [fill] circle (1pt);
	
	\foreach \a in {0,45,...,180} {
		\draw[fill] (\a:2cm) circle (1pt); 
		}
\node[inner sep=0.5,fill=none,label=left:{\scriptsize{$k_1$}}]at (135:2) {};
\node[inner sep=0.5,fill=none,label=right:{\scriptsize{$k_3$}}]at (45:2) {};
\node[inner sep=0.5,fill=none,label=above:{\scriptsize{$k_2$}}]at (90:2) {};
\node[inner sep=0.5,fill=none,label=left:{\scriptsize{$1$}}]at (180:2) {};
\node[inner sep=0.5,fill=none,label=right:{\scriptsize{$n-1$}}]at (180:-2) {};

\node[inner sep=0.5,fill=none](i)at (90:2) {};
\node[inner sep=0.5,fill=none,label=right:{}](j)at (45:2) {};
\node[inner sep=0.5,fill=none,label=below:{}](n) at (0:0) {};
\node[inner sep=0.5, fill=none, label=left:{}](k) at (135:2) {};

\path[every node/.style={font=\sffamily\small}]
(n) edge[color=blue] node [midway] {$\bullet$} (i);
\draw (j)--(n) [color=blue] {};
\draw (k)--(n) [color=blue] {};

    \end{tikzpicture}
    \end{subfigure}\hfill
    \begin{subfigure}
    \centering
    \begin{tikzpicture}
    \draw [fill] circle (1pt);
	
	\foreach \a in {0,45,...,180} {
		\draw[fill] (\a:2cm) circle (1pt); 
		}
\node[inner sep=0.5,fill=none,label=left:{\scriptsize{$k_1$}}]at (135:2) {};
\node[inner sep=0.5,fill=none,label=right:{\scriptsize{$k_3$}}]at (45:2) {};
\node[inner sep=0.5,fill=none,label=above:{\scriptsize{$k_2$}}]at (90:2) {};
\node[inner sep=0.5,fill=none,label=left:{\scriptsize{$1$}}]at (180:2) {};
\node[inner sep=0.5,fill=none,label=right:{\scriptsize{$n-1$}}]at (180:-2) {};

\node[inner sep=0.5,fill=none](i)at (90:2) {};
\node[inner sep=0.5,fill=none,label=right:{}](j)at (45:2) {};
\node[inner sep=0.5,fill=none,label=below:{}](n) at (0:0) {};
\node[inner sep=0.5, fill=none, label=left:{}](k) at (135:2) {};

\path[every node/.style={font=\sffamily\small}]
(n) edge[color=blue] node [midway] {$\bullet$} (j);
\path[every node/.style={font=\sffamily\small}]
(n) edge[color=blue] node [midway] {$\bullet$} (k);
\draw (i)--(n) [color=blue] {};
    \end{tikzpicture}
    \hfill
    \end{subfigure}
    \caption{The two types of configurations of three essential curves that may not appear in an exceptional collection of curves. Here $k_1, k_2, k_3 \in \{1, \ldots, n-1\}$ and $k_1 < k_2 < k_3$.}
    \label{fig:forbidden_cconfiguration}
\end{figure}

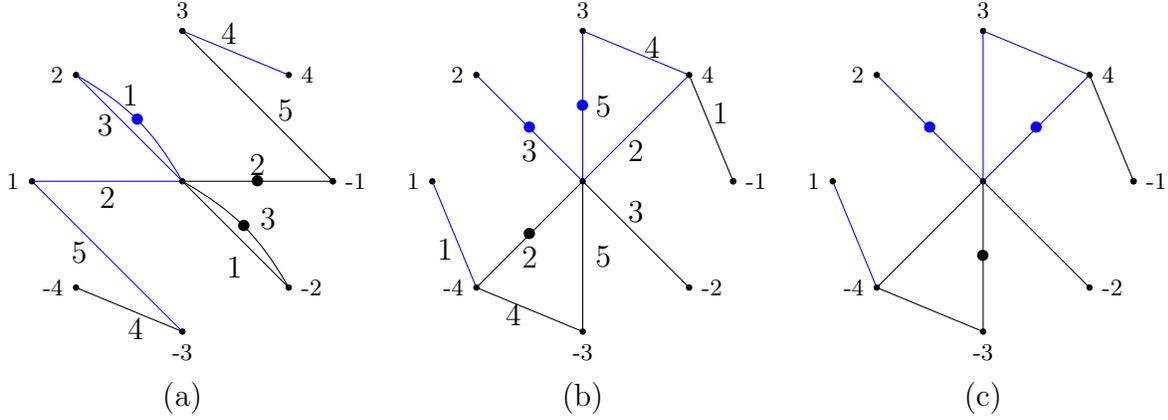
\begin{figure}[!htb]
\centering
\begin{subfigure}
\centering
\begin{tikzpicture}
\draw [fill] circle (1pt);
	
	\foreach \a in {0,45,...,360} {
		\draw[fill] (\a:2cm) circle (1pt); 
		}
\node[inner sep=0.5,fill=none,label=left:{\scriptsize{1}}]at (180:2) {};
\node[inner sep=0.5,fill=none,label=right:{\scriptsize{4}}]at (45:2) {};
\node[inner sep=0.5,fill=none,label=above:{\scriptsize{3}}]at (90:2) {};
\node[inner sep=0.5,fill=none,label=left:{\scriptsize{2}}]at (135:2) {};
\node[inner sep=0.5,fill=none,label=right:{\scriptsize{-1}}]at (180:-2) {};
\node[inner sep=0.5,fill=none,label=left:{\scriptsize{-4}}]at (45:-2) {};
\node[inner sep=0.5,fill=none,label=below:{\scriptsize{-3}}]at (90:-2) {};
\node[inner sep=0.5,fill=none,label=right:{\scriptsize{-2}}]at (135:-2) {};
	\node[inner sep=0.5, fill=none, label=right:{}] (a) at (270:2) {};
	\node[inner sep=0.5, fill=none, label=below:{}] (b) at (225:2) {};
	\draw (a)--(b) [color=black] {};
	
	\node[inner sep=0.5, fill=none, label=right:{}] (-a) at (270:-2) {};
	\node[inner sep=0.5, fill=none, label=below:{}] (-b) at (225:-2) {};
	\draw (-a)--(-b) [color=blue] {};
	\node[inner sep=0.5, fill=none, label=right:{}] (a) at (180:2) {};
	\node[inner sep=0.5, fill=none, label=below:{}] (b) at (270:2) {};
	\draw (a)--(b) [color=blue] {};
	
	\node[inner sep=0.5, fill=none, label=right:{}] (-a) at (180:-2) {};
	\node[inner sep=0.5, fill=none, label=below:{}] (-b) at (270:-2) {};
	\draw (-a)--(-b) [color=black] {};
	\node[inner sep=0.5, fill=none, label=right:{}] (a) at (180:2) {};
	\node[inner sep=0.5, fill=none, label=below:{}] (b) at (0:0) {};
	\draw (a)--(b) [color=blue] {};
	
	\node[inner sep=0.5, fill=none, label=right:{}] (-a) at (180:-2) {};
	\node[inner sep=0.5, fill=none, label=below:{}] (-b) at (0:0) {};
	\path[every node/.style={font=\sffamily\small}]
	 (-a) edge[color=black] node [midway] {$\bullet$} (-b);
	\node[inner sep=0.5, fill=none, label=right:{}] (f) at (135:2) {};
	\node[inner sep=0.5, fill=none, label=below:{}] (g) at (0:0) {};
	\path[every node/.style={font=\sffamily\small}]
	 (f) edge[bend left=15, color=blue] node [midway] {$\bullet$} (g);

	\node[inner sep=0.5, fill=none, label=right:{}] (-a) at (135:-2) {};
	\node[inner sep=0.5, fill=none, label=below:{}] (-b) at (0:0) {};
	\path[every node/.style={font=\sffamily\small}]
	(-a) edge[color=black] node [midway] {} (-b);
	
	\node[inner sep=0.5, fill=none, label=right:{}] (a) at (135:2) {};
	\node[inner sep=0.5, fill=none, label=below:{}] (b) at (0:0) {};
	\path[every node/.style={font=\sffamily\small}]
(a) edge[color=blue] node [right] {} (b);	
	
	\node[inner sep=0.5, fill=none, label=right:{}] (-a) at (135:-2) {};
	\node[inner sep=0.5, fill=none, label=below:{}] (-b) at (0:0) {};
	\path[every node/.style={font=\sffamily\small}]
(-a) edge[bend right=15] node [midway] {$\bullet$} (-b);
	
	\node[inner sep=0.5, fill=none, label=right:{5}] at (210:1.9) {};
	\node[inner sep=0.5, fill=none, label=right:{4}] at (80:2) {};
	\node[inner sep=0.5, fill=none, label=right:{1}] at (130:1.5) {};
	\node[inner sep=0.5, fill=none, label=below:{2}] at (175:1) {};
	\node[inner sep=0.5, fill=none, label=right:{3}] at (150:1.5) {};
	
	\node[inner sep=0.5, fill=none, label=left:{5}] at (210:-1.9) {};
	\node[inner sep=0.5, fill=none, label=left:{4}] at (80:-2) {};
	\node[inner sep=0.5, fill=none, label=left:{1}] at (130:-1.5) {};
	\node[inner sep=0.5, fill=none, label=above:{2}] at (175:-1) {};
	\node[inner sep=0.5, fill=none, label=right:{3}] at (150:-1) {};
	
	\node[inner sep=0.5,fill=none,label=below:{(a)}]at (90:-2.5) {};
\end{tikzpicture}
\end{subfigure}
\begin{subfigure}
\centering
\begin{tikzpicture}
\draw [fill] circle (1pt);
	
	\foreach \a in {0,45,...,360} {
		\draw[fill] (\a:2cm) circle (1pt); 
		}

\node[inner sep=0.5,fill=none,label=left:{\scriptsize{1}}]at (180:2) {};
\node[inner sep=0.5,fill=none,label=right:{\scriptsize{4}}]at (45:2) {};
\node[inner sep=0.5,fill=none,label=above:{\scriptsize{3}}]at (90:2) {};
\node[inner sep=0.5,fill=none,label=left:{\scriptsize{2}}]at (135:2) {};
\node[inner sep=0.5,fill=none,label=right:{\scriptsize{-1}}]at (180:-2) {};
\node[inner sep=0.5,fill=none,label=left:{\scriptsize{-4}}]at (45:-2) {};
\node[inner sep=0.5,fill=none,label=below:{\scriptsize{-3}}]at (90:-2) {};
\node[inner sep=0.5,fill=none,label=right:{\scriptsize{-2}}]at (135:-2) {};

	\node[inner sep=0.5, fill=none, label=right:{}] (a) at (0:0) {};
	\node[inner sep=0.5, fill=none, label=below:{}] (b) at (45:2) {};
	\node[inner sep=0.5, fill=none, label=right:{}] (c) at (90:2) {};
	\node[inner sep=0.5, fill=none, label=below:{}] (d) at (135:2) {};
	\node[inner sep=0.5, fill=none, label=below:{}] (e) at (180:2) {};
	\node[inner sep=0.5, fill=none, label=right:{}] (f) at (225:2) {};
	
	\path[every node/.style={font=\sffamily\small}]
(a) edge[color=blue] node [midway] {} (b);

\path[every node/.style={font=\sffamily\small}]
(a) edge[color=blue] node [midway] {$\bullet$} (c);
\path[every node/.style={font=\sffamily\small}]
(a) edge[color=blue] node [midway] {$\bullet$} (d);
\path[every node/.style={font=\sffamily\small}]
(b) edge[color=blue] node [midway] {} (c);
\path[every node/.style={font=\sffamily\small}]
(e) edge[color=blue] node [midway] {} (f);

	\node[inner sep=0.5, fill=none, label=below:{2}] (b) at (45:1) {};
	\node[inner sep=0.5, fill=none, label=right:{5}] (c) at (90:1) {};
	\node[inner sep=0.5, fill=none, label=below:{3}] (d) at (135:1) {};
	\node[inner sep=0.5, fill=none, label=right:{4}] (e) at (70:1.9) {};
	\node[inner sep=0.5, fill=none, label=left:{1}] (f) at (210:1.8) {};

\node[inner sep=0.5, fill=none, label=right:{}] (a) at (0:0) {};
	\node[inner sep=0.5, fill=none, label=below:{}] (b) at (45:-2) {};
	\node[inner sep=0.5, fill=none, label=right:{}] (c) at (90:-2) {};
	\node[inner sep=0.5, fill=none, label=below:{}] (d) at (135:-2) {};
	\node[inner sep=0.5, fill=none, label=below:{}] (e) at (180:-2) {};
	\node[inner sep=0.5, fill=none, label=right:{}] (f) at (225:-2) {};
	
	\path[every node/.style={font=\sffamily\small}]
(a) edge[color=black] node [midway] {$\bullet$} (b);

\path[every node/.style={font=\sffamily\small}]
(a) edge[color=black] node [midway] {} (c);
\path[every node/.style={font=\sffamily\small}]
(a) edge[color=black] node [midway] {} (d);
\path[every node/.style={font=\sffamily\small}]
(b) edge[color=black] node [midway] {} (c);
\path[every node/.style={font=\sffamily\small}]
(e) edge[color=black] node [midway] {} (f);

	\node[inner sep=0.5, fill=none, label=below:{2}] (b) at (45:-1) {};
	\node[inner sep=0.5, fill=none, label=right:{5}] (c) at (90:-1) {};
	\node[inner sep=0.5, fill=none, label=above:{3}] (d) at (135:-1) {};
	\node[inner sep=0.5, fill=none, label=left:{4}] (e) at (70:-1.9) {};
	\node[inner sep=0.5, fill=none, label=right:{1}] (f) at (210:-1.8) {};
\node[inner sep=0.5,fill=none,label=below:{(b)}]at (90:-2.5) {};
\end{tikzpicture}
\end{subfigure}
\begin{tikzpicture}
\draw [fill] circle (1pt);
	
	\foreach \a in {0,45,...,360} {
		\draw[fill] (\a:2cm) circle (1pt); 
		}
		
1
\node[inner sep=0.5,fill=none,label=left:{\scriptsize{1}}]at (180:2) {};
\node[inner sep=0.5,fill=none,label=right:{\scriptsize{4}}]at (45:2) {};
\node[inner sep=0.5,fill=none,label=above:{\scriptsize{3}}]at (90:2) {};
\node[inner sep=0.5,fill=none,label=left:{\scriptsize{2}}]at (135:2) {};
\node[inner sep=0.5,fill=none,label=right:{\scriptsize{-1}}]at (180:-2) {};
\node[inner sep=0.5,fill=none,label=left:{\scriptsize{-4}}]at (45:-2) {};
\node[inner sep=0.5,fill=none,label=below:{\scriptsize{-3}}]at (90:-2) {};
\node[inner sep=0.5,fill=none,label=right:{\scriptsize{-2}}]at (135:-2) {};

	\node[inner sep=0.5, fill=none, label=right:{}] (a) at (0:0) {};
	\node[inner sep=0.5, fill=none, label=below:{}] (b) at (45:2) {};
	\node[inner sep=0.5, fill=none, label=right:{}] (c) at (90:2) {};
	\node[inner sep=0.5, fill=none, label=below:{}] (d) at (135:2) {};
	\node[inner sep=0.5, fill=none, label=below:{}] (e) at (180:2) {};
	\node[inner sep=0.5, fill=none, label=right:{}] (f) at (225:2) {};
	
	\path[every node/.style={font=\sffamily\small}]
(a) edge[color=blue] node [midway] {$\bullet$} (b);

\path[every node/.style={font=\sffamily\small}]
(a) edge[color=blue] node [midway] {} (c);
\path[every node/.style={font=\sffamily\small}]
(a) edge[color=blue] node [midway] {$\bullet$} (d);
\path[every node/.style={font=\sffamily\small}]
(b) edge[color=blue] node [midway] {} (c);
\path[every node/.style={font=\sffamily\small}]
(e) edge[color=blue] node [midway] {} (f);

\node[inner sep=0.5, fill=none, label=right:{}] (a) at (0:0) {};
	\node[inner sep=0.5, fill=none, label=below:{}] (b) at (45:-2) {};
	\node[inner sep=0.5, fill=none, label=right:{}] (c) at (90:-2) {};
	\node[inner sep=0.5, fill=none, label=below:{}] (d) at (135:-2) {};
	\node[inner sep=0.5, fill=none, label=below:{}] (e) at (180:-2) {};
	\node[inner sep=0.5, fill=none, label=right:{}] (f) at (225:-2) {};
	
	\path[every node/.style={font=\sffamily\small}]
(a) edge[color=black] node [midway] {} (b);

\path[every node/.style={font=\sffamily\small}]
(a) edge[color=black] node [midway] {$\bullet$} (c);
\path[every node/.style={font=\sffamily\small}]
(a) edge[color=black] node [midway] {} (d);
\path[every node/.style={font=\sffamily\small}]
(b) edge[color=black] node [midway] {} (c);
\path[every node/.style={font=\sffamily\small}]
(e) edge[color=black] node [midway] {} (f);
\node[inner sep=0.5,fill=none,label=below:{(c)}]at (90:-2.5) {};
\end{tikzpicture}
\begin{subfigure}
\centering

\end{subfigure}\hfill
\caption{The collection of curves in (a) and (b) correspond to exceptional sequences. In (a) we draw the two curves connected to $-5$ not as geodesics to make it clear that four curves are incident to the centroid of $\Sigma_5$. The collection of curves in (c) is not an exceptional collection.}
\label{fig_exc_coll_of curves}
\end{figure}

\begin{thm}\label{thm1}
There is a bijection between exceptional collections of curves on $\Sigma_n$ and exceptional collections of representations of $Q^n$ given by
\[
\mathcal{E} = \{[\gamma_i]\}_{i=1}^k \mapsto \{V(\gamma_i)\}_{i = 1}^k
\]
where $\gamma_i$ is the essential curve of $[\gamma_i]$.
\end{thm}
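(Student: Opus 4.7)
The plan is to translate the representation-theoretic conditions defining exceptional collections into the combinatorial conditions (a), (b), (c) by working pairwise and among triples, using the reduction already recorded in Section~\ref{sec:exc_sequences}: for representations of $Q^n$ we have $\operatorname{Ext}^k(U,V)=0$ for $k\ge 2$ and $\operatorname{Ext}^1(U,V)\simeq\text{D}\operatorname{Hom}(V,\tau U)$. Thus a sequence $(V^1,\ldots,V^k)$ is exceptional if and only if, for each pair $i>j$, both $\operatorname{Hom}(V^i,V^j)$ and $\operatorname{Hom}(V^j,\tau V^i)$ vanish (together with the standard conditions at $i=j$, which just say each $V^i$ is a brick with no self-extensions, automatic for indecomposables of a Dynkin quiver). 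By Proposition~\ref{prop_tau_action}, the operator $\tau$ becomes the geometric map $\varrho$ on essential curves, so all obstructions live in the plane $\Sigma_n$.

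The first milestone is a pairwise dictionary. For two distinct classes $[\gamma],[\delta]\in\text{Eq}(\Sigma_n)$ with $[\delta]\neq[\overline{\gamma}]$, I would prove that some ordering of $(V(\gamma),V(\delta))$ is exceptional if and only if $\{[\gamma],[\delta]\}$ is not a bad pair. The forward direction is a direct computation from Definition~\ref{def:2}: if two essential curves cross in their interior or share the same pair of endpoints, one reads off from the dimension vectors that either a nonzero morphism exists both ways or a nonzero morphism exists from one and a nonzero morphism into the $\varrho$-image exists from the other. For the converse, when the curves are noncrossing with distinct endpoint sets, an ordering of the two representations can be prescribed by the relative clockwise positions of $\gamma$ and $\delta$ at any shared endpoint (and by endpoint comparisons if no endpoint is shared), and a case check against the Auslander--Reiten quiver verifies that the chosen order kills both the relevant $\operatorname{Hom}$ and the relevant $\operatorname{Hom}$ to $\varrho$-translate. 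The exceptional pair $\{[\gamma],[\overline{\gamma}]\}$ is handled separately: here $V(\gamma)$ and $V(\overline{\gamma})$ differ only by swapping vector spaces at the fork vertices $n{-}1$ and $n$ in Definition~\ref{def:2}, and one checks directly (as in Example~\ref{commuting_curves_fig}) that both orderings are exceptional, which is precisely why this pair is excluded from the definition of bad pair.

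With the pairwise dictionary in place, I would next show that the three-curve configurations in Figure~\ref{fig:forbidden_cconfiguration} correspond exactly to triples of indecomposables that admit no exceptional ordering, by computing the six relevant $\operatorname{Hom}$ and $\operatorname{Hom}(-, \varrho(-))$ spaces using Definition~\ref{def:2}(c) and detecting a circulating pattern of nonzero morphisms that no linear order can break. Likewise, I would argue that any cycle in the graph determined by $\{[\gamma_i]\}$ must arise from curves sharing endpoints at a common boundary vertex or at the puncture $\pm n$; at a fixed boundary vertex the incident essential curves are linearly ordered by inclusion of their supports, and non-chord cycles force a forbidden Hom pattern unless the cycle comes from the symmetric configuration of Figure~\ref{fig_special_configuration}, which is in turn controlled by the exceptional pair $\{[\gamma],[\overline{\gamma}]\}$ and hence allowed at most once. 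Conditions (a), (b), (c) then correspond, respectively, to avoiding non-orderable pairs, avoiding multiple or non-symmetric cycles, and avoiding forbidden triples. To assemble the forward direction of the theorem from these local constraints I would produce a total order by sweeping: each pair of curves prescribes a relative order from the pairwise dictionary, the forbidden triple condition guarantees these binary relations contain no $3$-cycle, and the restriction on graph cycles ensures no longer obstruction arises; a topological sort then yields the desired exceptional ordering. The converse is immediate from the local dictionaries. The main obstacle will be the pairwise case analysis: Definition~\ref{def:2} has several regimes according to whether the endpoints lie on the boundary, whether $\pm n$ is an endpoint, and whether the two-dimensional vector spaces of case (b) appear, so a careful organization of these cases (perhaps by the number of endpoints at the puncture) is what keeps the argument from becoming unwieldy.
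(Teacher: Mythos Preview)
Your approach is essentially the paper's: establish a pairwise dictionary (the paper breaks this into three lemmas---shared endpoint, bad pair, no common endpoint---and uses the Euler form rather than $\operatorname{Hom}(-,\tau-)$ directly, but the content is the same), then assemble. The backward direction, which you call ``immediate,'' does require checking that violations of (a), (b), (c) each obstruct orderability; the paper does this explicitly, and your local dictionaries do supply the needed contradictions, so this is fine.

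There is one genuine gap in your forward-direction assembly. Your topological-sort argument rests on the claim that conditions (b) and (c) force the precedence relation to be acyclic, but condition (b) \emph{permits} one cycle in the underlying graph, namely the configuration of Figure~\ref{fig_special_configuration}. You never explain why this allowed configuration does not itself produce a directed cycle in the precedence relation (and hence obstruct the topological sort). The paper confronts this head-on: it removes one specific curve $[\gamma]$ from the allowed cycle, observes that the remaining collection is forest-like and hence has a source $[\delta]$, and then checks by cases whether $[\gamma]$ or $[\delta]$ can serve as the global first element. Without an argument of this kind, your sweep could stall. A secondary point: your attribution of ``no $3$-cycle'' to condition (c) alone is slightly off, since graph triangles (length-$3$ cycles in the underlying graph) are precedence $3$-cycles handled by (b), not (c); condition (c) specifically handles the precedence $3$-cycles among three radii at the puncture, which form no cycle in the underlying graph at all. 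This is only a labeling issue and does not affect the logic once corrected.
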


Using our model, we now explicitly describe the ways in which we may order the representations coming from an exceptional collection of curves to produce an exceptional sequence. 

{Let $[\gamma]$ and $[\delta]$ be two distinct equivalences classes that are not a bad pair.} We say that $[\gamma]$ and $[\delta]$ \textit{share an endpoint} if the curves $\gamma$ and $\delta$ or $\gamma$ and $\delta^\prime$ share an endpoint $\ell \in \{\pm 1, \ldots, \pm n\}$. Assume that $\gamma$ and $\delta$ are the essential curves of $[\gamma]$ and $[\delta]$, respectively. We say that $[\gamma]$ is \textit{clockwise} from $[\delta]$ if one of the following holds:
\begin{enumerate}
\item[1)] the classes $[\gamma]$ and $[\delta]$ share an endpoint $\ell \in \{\pm 1, \ldots, \pm (n-1)\}$ where $\gamma$ is incident to $\ell$ and is clockwise about $\ell$ from $\delta$ or $\delta^\prime$ by a positive angle of rotation strictly less than $\pi$, 
\item[2)] the curve $\gamma$ (resp., $\delta$) connects $s > 0$ and $\pm n$ (resp., $t > 0$ and $\pm n$) and $s > t$.
\end{enumerate}
In this situation, we say that $[\delta]$ is \textit{counterclockwise} from $[\gamma]$.

Given an exceptional collection of curves $\mathcal{E}$, we will consider total orders on these equivalence classes of curves defined as follows. Let $[\gamma_i]$ and $[\gamma_j]$ be distinct equivalence classes where $\{\text{endpoints of $[\gamma_i]$}\} \neq \{\text{endpoints of $[\gamma_j]$}\}$ and which share an endpoint $\ell$. Furthermore, assume that $\gamma_i$ and $\gamma_j$ are the essential curves of $[\gamma_i]$ and $[\gamma_j]$, respectively. Now suppose that any such pair has the property that $i < j$ if the following holds:
\begin{itemize}
\item $[\gamma_{j}]$ is clockwise from $[\gamma_{i}]$ about $\ell$, or
\item if $\gamma_{j}$ has endpoints $k_j$ and  $\pm n$ and $\gamma_{i}$ has endpoints $k_i$ and $\mp n$, respectively, then $k_j < k_i.$ 
\end{itemize}
In this case, we call the sequence $\overrightarrow{\mathcal{E}} = ([\gamma_i])_{i=1}^k$ an \textit{exceptional sequence of curves}. That at least one such order on the classes in $\overrightarrow{\mathcal{E}}$ exists follows from the properties of exceptional collections of curves.

Observe that there is no prescribed order on pairs of classes $[\gamma_i]$ and $[\gamma_j] = [\overline{\gamma_i}]$. This is intentional because the corresponding representations $V(\gamma_i)$ and $V(\overline{\gamma_i})$ define exceptional sequences in either order. As the following theorem shows, exceptional sequences of curves are equivalent to exceptional sequences of representations.






\begin{thm}\label{thm:2}
There is a bijection between exceptional sequences of curves on $\Sigma_n$ and exceptional sequences of representations of $Q^n$ given by 
\[
\overrightarrow{\mathcal{E}} = ([\gamma_i])_{i=1}^k \mapsto (V(\gamma_i))_{i=1}^k
\]
where $\gamma_i$ is the essential curve of $[\gamma_i].$
\end{thm}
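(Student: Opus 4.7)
The starting point is Theorem~\ref{thm1}, which already gives a bijection $\mathcal{E} \mapsto \{V(\gamma_i)\}_{i=1}^k$ between exceptional collections of curves and exceptional collections of representations. Consequently, to prove the theorem it suffices, after fixing such an exceptional collection, to show that an ordering of $\mathcal{E}$ yields an exceptional sequence of curves if and only if the corresponding ordering of $\{V(\gamma_i)\}_{i=1}^k$ yields an exceptional sequence of representations.

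The plan has two main steps. First, I would establish a geometric criterion for Hom-vanishing: given two distinct classes $[\gamma]$ and $[\delta]$ drawn from an exceptional collection, I expect $\text{Hom}(V(\gamma), V(\delta)) \neq 0$ precisely when $[\gamma]$ and $[\delta]$ share an endpoint $\ell$ and $[\gamma]$ is clockwise from $[\delta]$ at $\ell$ (with the analogous condition for the centroid endpoints $\pm n$). This should be proved by computing Hom spaces along paths in the Auslander--Reiten quiver of $Q^n$ together with a direct case analysis on the explicit vector-space data of Definition~\ref{def:2}. Second, for Ext-vanishing I would invoke the identity $\text{Ext}^1(V(\gamma), V(\delta)) \cong D\,\text{Hom}(V(\delta), \tau V(\gamma))$ recalled in Section~\ref{sec:exc_sequences}. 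When $\tau V(\gamma) = 0$ this vanishes trivially; otherwise, Proposition~\ref{prop_tau_action} identifies $\tau V(\gamma)$ with $V(\varrho(\gamma))$, which reduces Ext-vanishing to the previous Hom-criterion applied to $[\delta]$ and $[\varrho(\gamma)]$. Because $\varrho$ rotates endpoints counterclockwise, this aligns with the clockwise-from requirement in the ordering axiom, including the second bullet of that axiom (the case where two curves in the sequence end at $\pm n$ of opposite sign), which arises precisely from applying $\varrho$ to a curve connecting to $\pm n$.

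Combining these, the requirement that $\text{Hom}(V(\gamma_i), V(\gamma_j)) = 0$ and $\text{Ext}^1(V(\gamma_i), V(\gamma_j)) = 0$ for all $i > j$ becomes equivalent to the requirement that, for all $i > j$, $[\gamma_j]$ is not clockwise from $[\gamma_i]$, which is exactly the defining condition of an exceptional sequence of curves. Bijectivity of the map then follows from the set-level bijection of Theorem~\ref{thm1}.

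The main obstacle will be the case analysis underlying the Hom-vanishing criterion. Definition~\ref{def:2} distinguishes three regimes for $V(\gamma)$: curves lying in a ``type $A$-like'' region (case a), curves that cross through the interior and produce two-dimensional summands (case b), and curves terminating at the puncture $\pm n$ (case c). Hom spaces between indecomposables in different regimes must be computed separately. A particularly delicate point is the pair $\{[\gamma], [\overline{\gamma}]\}$: the absence of a prescribed order between such classes in an exceptional sequence of curves must be matched by the algebraic fact, hinted at in Example~\ref{commuting_curves_fig}, that both orderings of $V(\gamma)$ and $V(\overline{\gamma})$ yield exceptional sequences, forcing Hom and Ext to vanish in each direction. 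Finally, one must verify that the forbidden configurations of Figure~\ref{fig:forbidden_cconfiguration} and the allowed cyclic configuration of Figure~\ref{fig_special_configuration} are exactly the geometric obstructions that govern which orderings are admissible; this connects the definition of an exceptional collection of curves back to the algebraic side via the Hom and Ext criteria.
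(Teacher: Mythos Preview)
Your overall architecture---reduce to Theorem~\ref{thm1} and then show that the two ordering conditions agree pair by pair---is exactly what the paper does. The divergence is in how you propose to handle each pair.

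The Hom-criterion you posit is false as stated. Take the configuration in Figure~\ref{fig:two_cases}(c) of the paper: there $[\delta]$ is clockwise from $[\gamma]$ (so that $(V(\gamma),V(\delta))$ is exceptional), yet the obstruction to $(V(\delta),V(\gamma))$ being exceptional is a nonsplit extension $0\to V(\delta)\to V(\nu)\to V(\gamma)\to 0$, i.e.\ $\text{Ext}^1(V(\gamma),V(\delta))\neq 0$, and hence by Lemma~2 of the paper $\text{Hom}(V(\gamma),V(\delta))=0$. So ``clockwise'' does not translate cleanly into nonvanishing of a single Hom-space; depending on where the \emph{other} endpoints sit, the obstruction is sometimes a Hom and sometimes an Ext. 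Your reduction of Ext to Hom via $\tau\cong V\circ\varrho$ therefore cannot be glued to the Hom-criterion in the way you describe. There is a second, more structural problem with that reduction: even when $[\gamma]$ and $[\delta]$ lie in an exceptional collection, $[\varrho(\gamma)]$ and $[\delta]$ may well form a bad pair (they can cross), so a Hom-criterion proved only for non-bad pairs does not apply to $\text{Hom}(V(\delta),V(\varrho(\gamma)))$.

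The paper sidesteps both issues by never isolating Hom from Ext. Its Lemma~2 shows that for indecomposables over $Q^n$, the pair $(U,V)$ is exceptional if and only if $\langle \textbf{dim}(V),\textbf{dim}(U)\rangle=0$; this Euler-form condition is symmetric in neither argument, is trivial to compute from dimension vectors, and packages Hom and Ext together. Lemma~\ref{common_endpt_lemma} then runs the case analysis directly on configurations of $[\gamma]$ and $[\delta]$: for each picture it exhibits either a nonzero morphism or a nonsplit short exact sequence (showing one order fails) and checks $\langle \textbf{dim}(V(\delta)),\textbf{dim}(V(\gamma))\rangle=0$ (showing the other succeeds). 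Lemma~\ref{no_intersection_lemma} handles the disjoint case, and Example~\ref{commuting_curves_fig} the $[\gamma],[\overline{\gamma}]$ case. With these in hand, the proof of Theorem~\ref{thm:2} is a short assembly argument. If you want to push your $\tau$-based approach through, you would need a Hom-criterion valid for \emph{arbitrary} pairs of indecomposables (including bad pairs), which is substantially more bookkeeping than the Euler-form route.
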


\begin{remark}\label{der_cat_defn}
Here, we recall the definition of exceptional sequences of objects in the bounded derived category of representations of $Q$. We use this definition to explain how Theorem~\ref{thm:2} provides a classification of exceptional sequences in the bounded derived category of $Q^n$. 

Let $\mathcal{D}$ denote the bounded derived category of representations of a quiver $Q$ over $\mathbb{K}$, and let $[1]$ denote its shift functor. We say that $X \in \mathcal{D}$ is \textit{exceptional} if $\text{Hom}_{\mathcal{D}}(X,X) \simeq \mathbb{K}$ and $\text{Hom}_{\mathcal{D}}(X[\ell], X) = 0$ for any integer $\ell \neq 0$. A sequence $(X_1, \ldots, X_k)$ of exceptional objects of $\mathcal{D}$ is called an \textit{exceptional sequence} if for each $i,j \in \{1, \ldots, k\}$ with $i < j$ one has that $\text{Hom}_{\mathcal{D}}(X_j[\ell],X_i) = 0$ for all integers $\ell$.

When $Q$ is a Dynkin quiver, it is known that for any indecomposable object $X$ in $\mathcal{D}$, there is a unique integer $\ell$ and a unique indecomposable representation $V$ of $Q$ such that $V \simeq X[\ell]$. In addition, for any exceptional sequence $(X_1, \ldots, X_k)$ in $\mathcal{D}$, the sequence $(X_1[\ell_1], \ldots, X_k[\ell_k])$ is also an exceptional sequence of objects in $\mathcal{D}$. Using these facts and that any two orientations of $Q$ give rise to triangle-equivalent derived categories \cite{dieter1988triangulated}, we see that any exceptional sequence of objects of $\mathcal{D}$ is unique identified with an exceptional sequence of representations of $Q$. 

Consequently, our classification of exceptional sequences of representations of $Q^n$ in \ref{thm:2} classifies all exceptional sequences in the bounded derived category of $Q^n$.
\end{remark}

\section{Type $D$ noncrossing partitions}\label{sec:noncrossing}

In this section, we interpret our geometric model in terms of a combinatorial model for noncrossing partitions of type $D$ introduced by Athanasiadis and Reiner \cite{athanasiadis2004noncrossing}.


A \textit{$D_n$-partition} is a partition $\pi$ of the $2n$ labeled points on $\Sigma_n$ into blocks that satisfying the following:
\begin{itemize}
\item[(1)] if $B$ is a block of $\pi$, then $-B$ is also a block of $\pi$, and 
\item[(2)] there is at most one \textit{zero-block} (if present) that contains $\{i,-i\}$ for some $i\in [n]^\pm := \{\pm 1, \ldots, \pm n\}$ and it does not consist of a single pair.
\end{itemize}
Let $\Pi^D(n)$ denote the poset of all $D_n$-partitions ordered by refinement. This poset is a geometric lattice. In fact, it is isomorphic to the intersection lattice of the type $D_n$ Coxeter hyperplane arrangement.

Now, let $\pi$ be a $D_n$-partition, $B \in \pi$ one of its blocks, and $\rho(B)$ the convex hull of the elements of $B$. We say that two blocks $B$ and $B^\prime$ of $\pi$ \textit{cross} if $\rho(B)$ and $\rho(B')$ do not coincide and one of them contains a point of the other in its relative interior. A $D_n$-partition $\pi$ is called \textit{noncrossing} if no two distinct blocks of $\pi$ are crossing. Let $NC^D(n)$ denote the subposet of $\Pi^D(n)$ of all noncrossing $D_n$-partitions. This poset is a lattice, and it is isomorphic to the lattice of noncrossing partitions of the associated Coxeter group \cite[Theorem 1.1]{athanasiadis2004noncrossing}.

Ingalls and Thomas have already connected the theory of exceptional sequences to the combinatorics of lattices of noncrossing partitions of finite Coxeter groups \cite{ingalls2009noncrossing}. More specifically, they proved that saturated chains  that contain the minimal element of these lattices are in bijection with exceptional sequences of representations of a suitable Dynkin quiver of the corresponding type. We have the following theorem, which shows that exceptional sequences of curves provide a combinatorial model for these chains in the lattice $NC^D(n)$.

\begin{thm}\label{thm:5.1}
Exceptional sequences of curves on $\Sigma_n$ are in bijection with the saturated chains of  NC$^D(n)$ containing its minimal element.
\end{thm}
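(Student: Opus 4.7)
The plan is to prove Theorem~\ref{thm:5.1} by constructing the bijection directly at the level of curves, and then verifying that it agrees with the composition of Theorem~\ref{thm:2} with the Ingalls--Thomas bijection of \cite{ingalls2009noncrossing} between exceptional sequences of representations of a Dynkin quiver and saturated chains from the minimal element of the corresponding lattice of noncrossing partitions. Since the representation-theoretic half is handled by the existing literature, the substance of the proof is the direct geometric description, which illuminates how the conditions in the definition of an exceptional collection of curves correspond to the noncrossing and saturated-chain conditions in $NC^D(n)$.

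Given an exceptional sequence of curves $\overrightarrow{\mathcal{E}} = ([\gamma_i])_{i=1}^k$, I would define a sequence of partitions $\pi_0, \pi_1, \ldots, \pi_k$ of the $2n$ labeled points of $\Sigma_n$ as follows: the blocks of $\pi_i$ are the connected components of the graph on vertex set $[n]^\pm$ whose edges are all curves appearing in $[\gamma_1], \ldots, [\gamma_i]$. A block of $\pi_i$ is declared a \emph{zero-block} precisely when it contains both $n$ and $-n$, which happens exactly when two curves through the centroid with opposite $\{n, -n\}$ labels have been used to form the block. One verifies $\pi_0 = \hat{0}$, and the claim is that $\pi_0 \lessdot \pi_1 \lessdot \cdots \lessdot \pi_k$ is a saturated chain in $NC^D(n)$.

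The verification splits into three steps. First, each $\pi_i$ is a $D_n$-partition: the symmetry $B \in \pi_i \Leftrightarrow -B \in \pi_i$ is automatic from the definition of equivalence classes of curves, and the zero-block condition is forced by the construction. Second, each $\pi_i$ is noncrossing: conditions (a), (b), (c) in the definition of an exceptional collection of curves rule out exactly those configurations whose blocks would cross in the sense of $NC^D(n)$, with condition (c) prohibiting the configurations of Figure~\ref{fig:forbidden_cconfiguration} that would produce crossing blocks involving the centroid. Third, each step $\pi_{i-1} \lessdot \pi_i$ is a cover relation: adding one equivalence class either merges exactly two distinct symmetric pairs of blocks, or it fuses a symmetric pair of blocks into a single zero-block, and each of these operations raises the rank in $NC^D(n)$ by exactly one.

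The main obstacle is the careful handling of curves incident to the centroid $\pm n$ and the associated zero-blocks, since the rank function on $NC^D(n)$ treats zero-blocks differently from ordinary blocks. The special cycle configuration allowed in Figure~\ref{fig_special_configuration} must be shown to be precisely the geometric mechanism by which a zero-block is formed in a single cover step; the "at most one such configuration" clause of the definition corresponds to the fact that a $D_n$-partition admits at most one zero-block. The clockwise/counterclockwise ordering rules in the definition of an exceptional sequence of curves translate directly into the requirement that each added curve produces a cover (rather than a longer jump in the lattice). Bijectivity is then completed either by constructing the inverse explicitly---reading off each cover relation $\pi_{i-1} \lessdot \pi_i$ as a uniquely determined new equivalence class of curves---or by checking that the above map agrees with the composition of Theorem~\ref{thm:2} with the Ingalls--Thomas bijection, from which bijectivity is immediate.
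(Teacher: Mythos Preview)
The paper's proof is much shorter and takes a different route: it simply composes Theorem~\ref{thm:2} with two results from the literature. First, Ingalls--Thomas \cite{ingalls2009noncrossing} gives a bijection between exceptional sequences of representations of $Q^n$ and saturated chains from the identity in the interval $[1,c]$ of the absolute order on $W$, via $(V^1,\ldots,V^k)\mapsto(1,\,s_{V^1},\,\ldots,\,s_{V^k}\cdots s_{V^1})$. Second, Athanasiadis--Reiner \cite{athanasiadis2004noncrossing} gives an explicit isomorphism $[1,c]\to NC^D(n)$. The paper records the compatibility lemma that $\varphi(s_{V(\gamma)})=((i,j))$ when $\gamma$ has endpoints $i,j$, and then just writes down the composite map. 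No direct geometric analysis of noncrossing conditions or cover relations is carried out.

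Your approach is more ambitious and, if completed, would be more illuminating: it would explain geometrically \emph{why} conditions (a), (b), (c) on curves encode noncrossing $D_n$-partitions. However, several of your steps are only asserted. The crucial one is the identification of your connected-components partition $\pi_m$ with the Athanasiadis--Reiner partition $f\bigl(\varphi(s_{V(\gamma_m)}\cdots s_{V(\gamma_1)})\bigr)$. This is not automatic: it requires showing that the cycle structure of the product of paired transpositions $((i_m,j_m))$, taken in the exceptional-sequence order, has supports equal to the connected components of the edge set on $[n]^\pm$, including the delicate case where the special configuration of Figure~\ref{fig_special_configuration} produces balanced cycles and hence the zero block. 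Without that verification, neither your direct argument (that each $\pi_m$ is noncrossing and each step is a cover) nor your fallback (``check that the above map agrees with the composition'') is actually complete---the fallback \emph{is} precisely this identification. The paper sidesteps all of this by never claiming a geometric description of the chain beyond the illustrative example in Figure~\ref{partitions}; the bijection it exhibits is purely the algebraic composite.
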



\begin{ex} Here we give an example of our bijection from Theorem~\ref{thm:5.1}. Consider the exceptional sequence $(00101,00001,01000,01110,10000)$. Sequentially adding the equivalence classes of curves corresponding to these representations to $\Sigma_5$, as shown in Figure \ref{partitions}, produces the following maximal chain of noncrossing partitions in $NC^D(5)$: 
\[\begin{array}{c}
\{[n]^\pm\}\\
\{\{2,3,4,5, -2,-3,-4,-5\}\} \sqcup \{\{i\} \mid i \in [n]^\pm \backslash \{\pm 2, \pm 3,\pm 4, \pm 5\}\}  \\
\{\{2,3,4,5\}, \{-2,-3,-4,-5\}\} \sqcup \{\{i\} \mid i \in [n]^\pm \backslash \{\pm 2, \pm 3,\pm 4, \pm 5\}\}  \\
\{\{3,4,5\}, \{-3,-4,-5\}\} \sqcup \{\{i\} \mid i \in [n]^\pm \backslash \{\pm 3,\pm 4, \pm 5\}\}  \\
\{\{3,5\}, \{-3,-5\}\} \sqcup \{\{i\} \mid i \in [n]^\pm \backslash \{\pm 3, \pm 5\}\}  \\
\{\{i\} \mid i \in [n]^\pm\} \end{array}.\] 
\end{ex}





\begin{figure}[htb!]
\centering
\begin{minipage}{0.33\textwidth}
\begin{tikzpicture}


\draw [fill] circle (1pt);

	\foreach \a in {0,45,...,360} {
		\draw[fill] (\a:2cm) circle (1pt); 
		}
		
		\node[inner sep=0.5, fill=none, label=left:{\scriptsize{$1$}}](1) at (180:2) {};
		\node[inner sep=0.5, fill=none, label=left:{\scriptsize{$2$}}](2) at (135:2) {};
		\node[inner sep=0.5, fill=none, label=above:{\scriptsize{$3$}}](3) at (90:2) {};
		\node[inner sep=0.5, fill=none, label=right:{\scriptsize{$4$}}](4) at (45:2) {};
		
		\node[inner sep=0.5, fill=none, label=right:{\scriptsize{$-1$}}](-1) at (180:-2) {};
		\node[inner sep=0.5, fill=none, label=right:{\scriptsize{$-2$}}](-2) at (135:-2) {};
		\node[inner sep=0.5, fill=none, label=below:{\scriptsize{$-3$}}](-3) at (90:-2) {};
		\node[inner sep=0.5, fill=none, label=left:{\scriptsize{$-4$}}](-4) at (45:-2) {};
		\node[inner sep=0.5, fill=none, label=right:{$\longrightarrow$}](-1) at (180:-2.5) {};

\end{tikzpicture}
\end{minipage}%
\begin{minipage}{0.33\textwidth}
\begin{tikzpicture}


\draw [fill] circle (1pt);

	\foreach \a in {0,45,...,360} {
		\draw[fill] (\a:2cm) circle (1pt); 
		}
		
		\node[inner sep=0.5, fill=none, label=left:{\scriptsize{$1$}}](1) at (180:2) {};
		\node[inner sep=0.5, fill=none, label=left:{\scriptsize{$2$}}](2) at (135:2) {};
		\node[inner sep=0.5, fill=none, label=above:{\scriptsize{$3$}}](3) at (90:2) {};
		\node[inner sep=0.5, fill=none, label=right:{\scriptsize{$4$}}](4) at (45:2) {};
		
		\node[inner sep=0.5, fill=none, label=right:{\scriptsize{$-1$}}](-1) at (180:-2) {};
		\node[inner sep=0.5, fill=none, label=right:{\scriptsize{$-2$}}](-2) at (135:-2) {};
		\node[inner sep=0.5, fill=none, label=below:{\scriptsize{$-3$}}](-3) at (90:-2) {};
		\node[inner sep=0.5, fill=none, label=left:{\scriptsize{$-4$}}](-4) at (45:-2) {};
		\node[inner sep=0.5, fill=none, label=right:{$\longrightarrow$}](-1) at (180:-2.5) {};
		\node[inner sep=0.5, fill=none, label=right:{}](0) at (0:0) {};
	
	\path[every node/.style={font=\sffamily\small}]
		(3) edge[color=blue] node [midway] {} (0);	
	\path[every node/.style={font=\sffamily\small}]
		(-3) edge[color=black] node [midway] {$\bullet$} (0);

\node[inner sep=0.5, fill=none, label=right:{1}] at (90:1) {};
\node[inner sep=0.5, fill=none, label=right:{1}] at (90:-1) {};

\end{tikzpicture}
\end{minipage}%
\begin{minipage}{0.33\textwidth}
\begin{tikzpicture}
\draw [fill] circle (1pt);

	\foreach \a in {0,45,...,360} {
		\draw[fill] (\a:2cm) circle (1pt); 
		}
		
		\node[inner sep=0.5, fill=none, label=left:{\scriptsize{$1$}}](1) at (180:2) {};
		\node[inner sep=0.5, fill=none, label=left:{\scriptsize{$2$}}](2) at (135:2) {};
		\node[inner sep=0.5, fill=none, label=above:{\scriptsize{$3$}}](3) at (90:2) {};
		\node[inner sep=0.5, fill=none, label=right:{\scriptsize{$4$}}](4) at (45:2) {};
		
		\node[inner sep=0.5, fill=none, label=right:{\scriptsize{$-1$}}](-1) at (180:-2) {};
		\node[inner sep=0.5, fill=none, label=right:{\scriptsize{$-2$}}](-2) at (135:-2) {};
		\node[inner sep=0.5, fill=none, label=below:{\scriptsize{$-3$}}](-3) at (90:-2) {};
		\node[inner sep=0.5, fill=none, label=left:{\scriptsize{$-4$}}](-4) at (45:-2) {};

	\path[every node/.style={font=\sffamily\small}]
		(3) edge[color=blue] node [midway] {} (0);	
	\path[every node/.style={font=\sffamily\small}]
		(-3) edge[color=black] node [midway] {$\bullet$} (0);
	
	\path[every node/.style={font=\sffamily\small}]
		(4) edge[color=blue] node [midway] {} (0);	
	\path[every node/.style={font=\sffamily\small}]
		(-4) edge[color=black] node [midway] {$\bullet$} (0);

\draw[fill=gray!50,opacity=0.3 ] (0,0) -- (45 : 2cm) -- (90:2cm)  -- cycle;
\draw[fill=gray!50, opacity=0.3 ] (0,0) -- (45 : -2cm) -- (90:-2cm)  -- cycle;

\node[inner sep=0.5, fill=none, label=right:{$\longrightarrow$}](-1) at (180:-2.5) {};

\node[inner sep=0.5, fill=none, label=right:{1}] at (90:1) {};
\node[inner sep=0.5, fill=none, label=right:{1}] at (90:-1) {};	

\node[inner sep=0.5, fill=none, label=right:{2}] at (45:1) {};
\node[inner sep=0.5, fill=none, label=right:{2}] at (45:-1) {};

\end{tikzpicture}
\end{minipage}
\begin{minipage}{0.33\textwidth}
\begin{tikzpicture}


\draw [fill] circle (1pt);

	\foreach \a in {0,45,...,360} {
		\draw[fill] (\a:2cm) circle (1pt); 
		}
		
		\node[inner sep=0.5, fill=none, label=left:{\scriptsize{$1$}}](1) at (180:2) {};
		\node[inner sep=0.5, fill=none, label=left:{\scriptsize{$2$}}](2) at (135:2) {};
		\node[inner sep=0.5, fill=none, label=above:{\scriptsize{$3$}}](3) at (90:2) {};
		\node[inner sep=0.5, fill=none, label=right:{\scriptsize{$4$}}](4) at (45:2) {};
		
		\node[inner sep=0.5, fill=none, label=right:{\scriptsize{$-1$}}](-1) at (180:-2) {};
		\node[inner sep=0.5, fill=none, label=right:{\scriptsize{$-2$}}](-2) at (135:-2) {};
		\node[inner sep=0.5, fill=none, label=below:{\scriptsize{$-3$}}](-3) at (90:-2) {};
		\node[inner sep=0.5, fill=none, label=left:{\scriptsize{$-4$}}](-4) at (45:-2) {};
		\node[inner sep=0.5, fill=none, label=right:{$\longrightarrow$}](-1) at (180:-2.5) {};
	
	\path[every node/.style={font=\sffamily\small}]
		(3) edge[color=blue] node [midway] {} (0);	
	\path[every node/.style={font=\sffamily\small}]
		(-3) edge[color=black] node [midway] {$\bullet$} (0);
	
	\path[every node/.style={font=\sffamily\small}]
		(4) edge[color=blue] node [midway] {} (0);	
	\path[every node/.style={font=\sffamily\small}]
		(-4) edge[color=black] node [midway] {$\bullet$} (0);	
		\path[every node/.style={font=\sffamily\small}]
		(3) edge[color=blue] node [midway] {} (2);	
	\path[every node/.style={font=\sffamily\small}]
		(-3) edge[color=black] node [midway] {} (-2);

\draw[fill=gray!50,opacity=0.3 ] (0,0) -- (45 : 2cm) -- (90:2cm)  -- cycle;
\draw[fill=gray!50, opacity=0.3 ] (0,0) -- (45 : -2cm) -- (90:-2cm)  -- cycle;

\draw[fill=gray!50,opacity=0.3 ] (0,0) -- (90 : 2cm) -- (135:2cm)  -- cycle;
\draw[fill=gray!50, opacity=0.3 ] (0,0) -- (90 : -2cm) -- (135:-2cm)  -- cycle;

\node[inner sep=0.5, fill=none, label=right:{1}] at (90:1) {};
\node[inner sep=0.5, fill=none, label=right:{1}] at (90:-1) {};	

\node[inner sep=0.5, fill=none, label=right:{2}] at (45:1) {};
\node[inner sep=0.5, fill=none, label=right:{2}] at (45:-1) {};	

\node[inner sep=0.5, fill=none, label=left:{3}] at (112:2) {};
\node[inner sep=0.5, fill=none, label=right:{3}] at (112:-2) {};

\end{tikzpicture}
\end{minipage}%
\begin{minipage}{0.33\textwidth}
\begin{tikzpicture}


\draw [fill] circle (1pt);

	\foreach \a in {0,45,...,360} {
		\draw[fill] (\a:2cm) circle (1pt); 
		}
		
		\node[inner sep=0.5, fill=none, label=left:{\scriptsize{$1$}}](1) at (180:2) {};
		\node[inner sep=0.5, fill=none, label=left:{\scriptsize{$2$}}](2) at (135:2) {};
		\node[inner sep=0.5, fill=none, label=above:{\scriptsize{$3$}}](3) at (90:2) {};
		\node[inner sep=0.5, fill=none, label=right:{\scriptsize{$4$}}](4) at (45:2) {};
		
		\node[inner sep=0.5, fill=none, label=right:{\scriptsize{$-1$}}](-1) at (180:-2) {};
		\node[inner sep=0.5, fill=none, label=right:{\scriptsize{$-2$}}](-2) at (135:-2) {};
		\node[inner sep=0.5, fill=none, label=below:{\scriptsize{$-3$}}](-3) at (90:-2) {};
		\node[inner sep=0.5, fill=none, label=left:{\scriptsize{$-4$}}](-4) at (45:-2) {};
		\node[inner sep=0.5, fill=none, label=right:{$\longrightarrow$}](-1) at (180:-2.5) {};
		\node[inner sep=0.5, fill=none, label=right:{}](0) at (0:0) {};
	
	\path[every node/.style={font=\sffamily\small}]
		(3) edge[color=blue] node [midway] {} (0);	
	\path[every node/.style={font=\sffamily\small}]
		(-3) edge[color=black] node [midway] {$\bullet$} (0);
		
	\path[every node/.style={font=\sffamily\small}]
		(3) edge[color=blue] node [midway] {} (0);	
	\path[every node/.style={font=\sffamily\small}]
		(-3) edge[color=black] node [midway] {$\bullet$} (0);
	
	\path[every node/.style={font=\sffamily\small}]
		(4) edge[color=blue] node [midway] {} (0);	
	\path[every node/.style={font=\sffamily\small}]
		(-4) edge[color=black] node [midway] {$\bullet$} (0);	
		\path[every node/.style={font=\sffamily\small}]
		(3) edge[color=blue] node [midway] {} (2);	
	\path[every node/.style={font=\sffamily\small}]
		(-3) edge[color=black] node [midway] {} (-2);
	\path[every node/.style={font=\sffamily\small}]
		(2) edge[color=blue] node [midway] {$\bullet$} (0);	
	\path[every node/.style={font=\sffamily\small}]
		(-2) edge[color=black] node [midway] {} (0);

\draw[fill=gray!50,opacity=0.3 ] (0,0) -- (45 : 2cm) -- (90:2cm)  -- cycle;
\draw[fill=gray!50, opacity=0.3 ] (0,0) -- (45 : -2cm) -- (90:-2cm)  -- cycle;

\draw[fill=gray!50,opacity=0.3 ] (0,0) -- (90 : 2cm) -- (135:2cm)  -- cycle;
\draw[fill=gray!50, opacity=0.3 ] (0,0) -- (90 : -2cm) -- (135:-2cm)  -- cycle;	

\draw[fill=gray!50,opacity=0.3 ] (0,0) -- (135 : 2cm) -- (225:2cm)  -- cycle;
\draw[fill=gray!50, opacity=0.3 ] (0,0) -- (135 : -2cm) -- (225:-2cm)  -- cycle;		
		
\node[inner sep=0.5, fill=none, label=right:{1}] at (90:1) {};
\node[inner sep=0.5, fill=none, label=right:{1}] at (90:-1) {};	

\node[inner sep=0.5, fill=none, label=right:{2}] at (45:1) {};
\node[inner sep=0.5, fill=none, label=right:{2}] at (45:-1) {};	

\node[inner sep=0.5, fill=none, label=left:{3}] at (112:2) {};
\node[inner sep=0.5, fill=none, label=right:{3}] at (112:-2) {};		

\node[inner sep=0.5, fill=none, label=left:{4}] at (135:1) {};
\node[inner sep=0.5, fill=none, label=right:{4}] at (135:-1) {};

\end{tikzpicture}
\end{minipage}%
\begin{minipage}{0.33\textwidth}
\begin{tikzpicture}


\draw [fill] circle (1pt);

	\foreach \a in {0,45,...,360} {
		\draw[fill] (\a:2cm) circle (1pt); 
		}
		
		\node[inner sep=0.5, fill=none, label=left:{\scriptsize{$1$}}](1) at (180:2) {};
		\node[inner sep=0.5, fill=none, label=left:{\scriptsize{$2$}}](2) at (135:2) {};
		\node[inner sep=0.5, fill=none, label=above:{\scriptsize{$3$}}](3) at (90:2) {};
		\node[inner sep=0.5, fill=none, label=right:{\scriptsize{$4$}}](4) at (45:2) {};
		
		\node[inner sep=0.5, fill=none, label=right:{\scriptsize{$-1$}}](-1) at (180:-2) {};
		\node[inner sep=0.5, fill=none, label=right:{\scriptsize{$-2$}}](-2) at (135:-2) {};
		\node[inner sep=0.5, fill=none, label=below:{\scriptsize{$-3$}}](-3) at (90:-2) {};
		\node[inner sep=0.5, fill=none, label=left:{\scriptsize{$-4$}}](-4) at (45:-2) {};

	\path[every node/.style={font=\sffamily\small}]
		(3) edge[color=blue] node [midway] {} (0);	
	\path[every node/.style={font=\sffamily\small}]
		(-3) edge[color=black] node [midway] {$\bullet$} (0);
		
	\path[every node/.style={font=\sffamily\small}]
		(3) edge[color=blue] node [midway] {} (0);	
	\path[every node/.style={font=\sffamily\small}]
		(-3) edge[color=black] node [midway] {$\bullet$} (0);
	
	\path[every node/.style={font=\sffamily\small}]
		(4) edge[color=blue] node [midway] {} (0);	
	\path[every node/.style={font=\sffamily\small}]
		(-4) edge[color=black] node [midway] {$\bullet$} (0);	
		\path[every node/.style={font=\sffamily\small}]
		(3) edge[color=blue] node [midway] {} (2);	
	\path[every node/.style={font=\sffamily\small}]
		(-3) edge[color=black] node [midway] {} (-2);
	\path[every node/.style={font=\sffamily\small}]
		(2) edge[color=blue] node [midway] {$\bullet$} (0);	
	\path[every node/.style={font=\sffamily\small}]
		(-2) edge[color=black] node [midway] {} (0);
		\path[every node/.style={font=\sffamily\small}]
		(1) edge[color=blue] node [midway] {} (2);	
	\path[every node/.style={font=\sffamily\small}]
		(-1) edge[color=black] node [midway] {} (-2);

\draw[fill=gray!50,opacity=0.3 ] (0,0) -- (45 : 2cm) -- (90:2cm)  -- cycle;
\draw[fill=gray!50, opacity=0.3 ] (0,0) -- (45 : -2cm) -- (90:-2cm)  -- cycle;

\draw[fill=gray!50,opacity=0.3 ] (0,0) -- (90 : 2cm) -- (135:2cm)  -- cycle;
\draw[fill=gray!50, opacity=0.3 ] (0,0) -- (90 : -2cm) -- (135:-2cm)  -- cycle;	

\draw[fill=gray!50,opacity=0.3 ] (0,0) -- (135 : 2cm)--(180:2cm) -- (225:2cm)  -- cycle;
\draw[fill=gray!50, opacity=0.3 ] (0,0) -- (135 : -2cm) -- (180:-2cm) -- (225:-2cm)  -- cycle;

\node[inner sep=0.5, fill=none, label=right:{1}] at (90:1) {};
\node[inner sep=0.5, fill=none, label=right:{1}] at (90:-1) {};	

\node[inner sep=0.5, fill=none, label=right:{2}] at (45:1) {};
\node[inner sep=0.5, fill=none, label=right:{2}] at (45:-1) {};	

\node[inner sep=0.5, fill=none, label=left:{3}] at (112:2) {};
\node[inner sep=0.5, fill=none, label=right:{3}] at (112:-2) {};		

\node[inner sep=0.5, fill=none, label=left:{4}] at (135:1) {};
\node[inner sep=0.5, fill=none, label=right:{4}] at (135:-1) {};	

\node[inner sep=0.5, fill=none, label=left:{5}] at (157:1.8) {};
\node[inner sep=0.5, fill=none, label=right:{5}] at (157:-1.8) {};	
\end{tikzpicture}
\end{minipage}%
\caption{A maximal chain in $NC^D(n)$ represented by an exceptional sequence of curves.}
\label{partitions}
\end{figure}
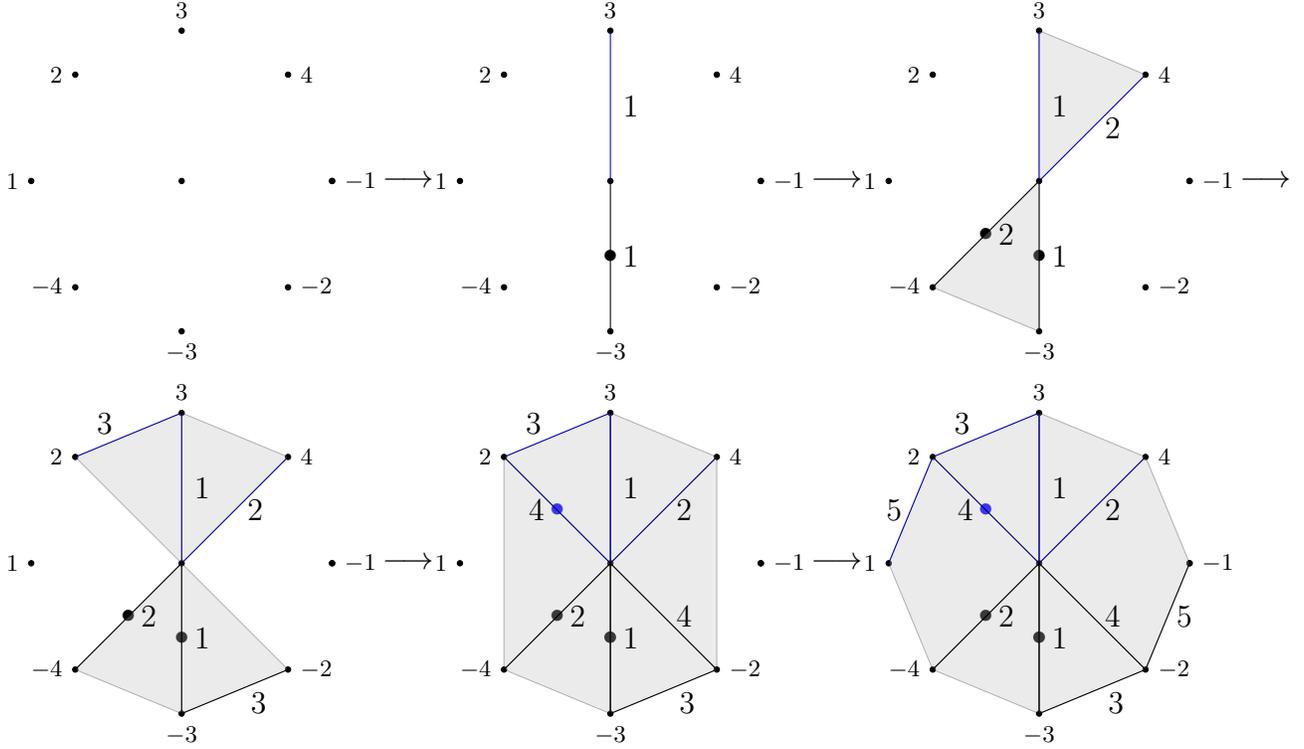


\section{Proof of Proposition~\ref{prop_tau_action}}\label{sec_prop_1_proof}

To prove Proposition~\ref{prop_tau_action}, we use the well-known fact that Auslander--Reiten translation associated may be calculated using \textit{reflection functors}. We define the \textit{reflection functor} \[R^+_k: \text{rep}(Q) \to \text{rep} (Q^\prime)\] when $k \in Q_0$ is a sink and $Q^\prime$ is the quiver obtained from $Q$ by reversing all arrows incident to $k$ as follows. Given $V = ((V_i)_{i \in Q_0}, (f_a)_{a \in Q_1}) \in \text{rep}(Q)$, we set ${R}_k^+(V) := ((V^\prime_i)_{i \in Q^\prime_0}, (f^\prime_a)_{a \in Q^\prime_1}) \in \text{rep}(Q^\prime)$ where
\begin{itemize}
\item $V_i^\prime = V_i$ for $i \neq k$ and $V^\prime_k$ is the kernel of the map $(f_a)_{a: s(a)\to k}: \left(\bigoplus_{a: s(a) \to k} V_{s(a)}  \right)\longrightarrow V_k ,$
\item $f^\prime_a = f_a$ for all arrows $a: i \to j \in Q_1$ with $j \neq k$, and for any arrows $a: i \to k \in Q_1$ the map $f^\prime_a: V_k^\prime \to V^\prime_i = V_i$ is the composition of the inclusion of $V_k^\prime$ into $\bigoplus_{a: s(a) \to k} V_{s(a)}$ with the projection onto the direct summand $V_i$.
\end{itemize}
The reflection functor $R^+_k$ sends the simple representation at vertex $k$ to the zero representation and induces an injective map from the other indecomposable representations of $Q$ to the indecomposable representations of $Q^\prime$. Additionally, if $\textbf{dim}(V) = (d_1, \ldots, d_n)$ with $V \in \text{rep}(Q)$, then \[\textbf{dim}(R^+_k(V)) = \displaystyle (d_1, \ldots, d_{k-1}, -d_k + \sum_{i: i \text{ is incident to } k} d_i, d_{k+1}, \ldots, d_n).\]

For more information about reflection functors we refer the reader to \cite{assem2006elements}.

\begin{proof}[Proof of Proposition~\ref{prop_tau_action}]
First, assume that the representation $V(\gamma)$ defined by $[\gamma] \in \text{Eq}(\Sigma_n)$ has $\tau V(\gamma) = 0$. In other words, $V(\gamma)$ is a projective representation of $Q$. By referring to Example~\ref{proj_and_inj_repns}, we see that $V(\varrho(\gamma)) \simeq \nu V(\gamma)$.

Next, assume that $\tau V(\gamma) \neq 0$. We prove that $V(\varrho(\gamma)) \simeq \nu V(\gamma)$ by considering the three types of representations that appear in Definition~\ref{def:2}. We prove the statement when $V(\gamma)$ is a representation of the form appearing in Definition~\ref{def:2} $b)$. The other two cases are handled similarly.

Recall that the Auslander--Reiten translation $\tau: \text{rep}(Q) \to \text{rep}(Q)$ may be expressed as $\tau = R^+_n\cdots R^+_1$. Let \[R^+_n\cdots R^+_1(V(\gamma)) = ((V^\prime_k)_{k \in Q_0}, (f^\prime_a)_{a \in Q_1}),\] and assume the endpoints of $\gamma$ are $i$ and $j$ where $i$ is its smallest endpoint. By assumption $V(\gamma)$ is not projective so $i \in \{2, \ldots, n-2\}$ and $j \in \{-(i+1), \ldots, -(n-1)\}$. By repeatedly using the above formula for how reflection functors act on dimension vectors, we obtain \[\begin{array}{ccccc}V^\prime_k & := & \left\{\begin{array}{lcl} \mathbb{K}^2 & : & k \in \{-j-1, \ldots, n-2\} \text{ for $-j \neq n-1$} \\ \mathbb{K} & : & k \in \{i-1, i, \ldots, -j-2\}\cup\{n-1,n\} \\ 0 & : & \text{otherwise.} \end{array}\right.\end{array}\] By inspection, we now have \[\textbf{dim}(\tau(V(\gamma))) = \textbf{dim}(V(\varrho(\gamma))).\]

Since $\tau(V(\gamma)) \neq 0$, we also know that $\tau(V(\gamma))$ is indecomposable. The fact that $Q^n$ is a Dynkin quiver implies that its indecomposable representations are determined up to isomorphism by their dimension vectors. Thus $\tau(V(\gamma)) \simeq V(\varrho(\gamma)).$
\end{proof}

\section{Proof of Theorem~\ref{thm1} and Theorem~\ref{thm:2}}\label{sec_thm_1_2_proof}

To prove Theorem~\ref{thm1} and Theorem~\ref{thm:2}, we first need to determine exactly which pairs of indecomposable representations define 0, 1, or 2 exceptional sequences. One of the ingredients we use in this classification is the \textit{Euler form} on $\mathbb{Z}^{Q^n_0} \cong \mathbb{Z}^n$. Given two vectors $\textbf{u} = (u_1, \ldots, u_n)$ and $\textbf{v} = (v_1, \ldots, v_n)$, the Euler form is given by \[\langle \textbf{u}, \textbf{v}\rangle = \displaystyle \sum_{i \in Q^n_0} u_iv_i - \sum_{\alpha \in Q^n_1} u_{s(\alpha)}v_{t(\alpha)}.\] It is well-known that for any representations $U$ and $V$ of an acyclic quiver $Q$, one can express the Euler form as \[\langle \textbf{dim}(U), \textbf{dim}(V)\rangle = \text{dim} \text{Hom}(U,V) - \text{dim} \text{Ext}^1(U,V).\]

\begin{lem}
{Let $U$ and $V$ be any indecomposable representations of $Q^n$. If $\text{Hom}(U, V) \neq 0$, then $\text{Hom}(V, U) = 0$ and $\text{Ext}^1(U, V) = 0.$ Dually, if $\text{Ext}^1(U, V) \neq 0$, then $\text{Ext}^1(V,U) = 0$ and $\text{Hom}(U,V) = 0$. Consequently, the sequence $(U,V)$ is exceptional if and only if $\langle \textbf{dim}(V), \textbf{dim}(U)\rangle = 0.$}
\end{lem}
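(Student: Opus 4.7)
The plan is to combine three facts standard for Dynkin quivers: (i) morphisms between indecomposable representations of $Q^n$ are sums of compositions of irreducible morphisms indexed by paths in the Auslander--Reiten quiver $\Gamma$; (ii) $\Gamma$ is acyclic in the sense that it contains no directed cycle through distinct vertices (visible in Figure~\ref{D5_ar_fig}, and true in general Dynkin type); and (iii) the Auslander--Reiten formula $\text{Ext}^1(U,V)\simeq \text{D}\,\text{Hom}(V,\tau U)$ already recorded in Section~\ref{sec:exc_sequences}. I also use that every indecomposable of $Q^n$ is a Schur root with vanishing self-Ext, and that no indecomposable of a Dynkin quiver is $\tau$-periodic.

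First I would dispose of the case $U\simeq V$: then $\text{End}(U)\simeq\mathbb{K}$ and $\text{Ext}^1(U,U)=0$, so the stated implications hold vacuously when applied to $(U,U)$. Assume now $U\not\simeq V$. If $\text{Hom}(U,V)\ne 0$, then (i) produces a directed path $U\to V$ in $\Gamma$, and (ii) rules out any reverse path, so $\text{Hom}(V,U)=0$. To see $\text{Ext}^1(U,V)=0$, a nonvanishing Ext would by (iii) give $\text{Hom}(V,\tau U)\ne 0$ and hence $\tau U\ne 0$ (so $U$ is non-projective), together with either $V\simeq \tau U$ or a directed path $V\to \tau U$. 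In either case, concatenation with the mesh path $\tau U\to U$ and the path $U\to V$ yields a directed cycle through distinct vertices of $\Gamma$, using the non-$\tau$-periodicity to ensure $\tau U\not\simeq U$. This contradicts (ii). The dual assertion (starting from $\text{Ext}^1(U,V)\ne 0$) follows by the symmetric argument after translating via (iii).

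Combining these implications yields the dichotomy that, for non-isomorphic indecomposables $U\not\simeq V$, at most one of $\dim\text{Hom}(V,U)$ and $\dim\text{Ext}^1(V,U)$ is nonzero. Since $Q^n$ is hereditary, \[\langle \textbf{dim}(V),\textbf{dim}(U)\rangle=\dim\text{Hom}(V,U)-\dim\text{Ext}^1(V,U),\] so its vanishing forces both summands to vanish individually; combined with the definition of exceptional sequence applied to $(U,V)$, this produces the stated equivalence (the reverse direction is immediate). The main obstacle is justifying the acyclicity (ii) rigorously in full generality for $Q^n$; I would handle it by citing the standard description of the AR quiver of a Dynkin quiver as a full subquiver of $\mathbb{Z}Q^n$ carrying a strictly monotone height function, rather than reprove it by hand.
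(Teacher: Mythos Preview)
Your argument is essentially the paper's: nonzero morphisms between non-isomorphic indecomposables correspond to directed paths in the (acyclic) Auslander--Reiten quiver, and the AR formula $\text{Ext}^1(U,V)\simeq D\,\text{Hom}(V,\tau U)$ converts the Ext-vanishing into another path statement; you spell out the resulting cycle contradiction and the Euler-form consequence more explicitly than the paper does, but the route is the same.

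One slip to correct: the case $U\simeq V$ is not vacuous---the hypothesis $\text{Hom}(U,U)\ne 0$ is satisfied while the conclusion $\text{Hom}(U,U)=0$ fails, so the lemma's first implication is literally false there (the paper's wording is imprecise on this point as well). The ``Consequently'' equivalence still holds in that case, since $\langle\textbf{dim}(U),\textbf{dim}(U)\rangle=\dim\text{Hom}(U,U)-\dim\text{Ext}^1(U,U)=1\ne 0$ and $(U,U)$ is not exceptional; you should state this rather than claim vacuity.
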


\begin{proof}
{We prove the first assertion, and the dual assertion is proved similarly. By assumption, there is a sequence of arrows of the Auslander--Reiten quiver of $Q^n$ going from $U$ to $V$. It is well known that the Auslander--Reiten quiver of any Dynkin quiver is acyclic. Therefore, there is no such sequence of arrows going from $V$ to $U$ or from $V$ to $\tau U$. This implies that $\text{Hom}(V,U) = 0$ and $\text{Ext}^1(U,V) \simeq D\text{Hom}(V,\tau U) = 0.$}
\end{proof}


Before presenting our classification, we need one other definition. {Let $\gamma :[0,1]\rightarrow\Sigma_n$ denote a geodesic curve whose endpoints are $i$ and $j$ where $1\le i<n$. The curve $\gamma$ induces a partition of the set of vertices of $\Sigma_n$ into disjoint sets of vertices $R,S,\{i,j\}$  (see Figure \ref{fig:support_fig2}). 
Without loss of generality, assume that $|S|<|R|$ . The \textit{support} of $\gamma$, denoted supp$(\gamma)$, is defined to be $S\sqcup\{i,j\}$. If $i=\pm n$, then the support of $\gamma$ is defined as $\text{supp}(\gamma)=\{i,i+1,\ldots,n-1,\pm n\}$.}

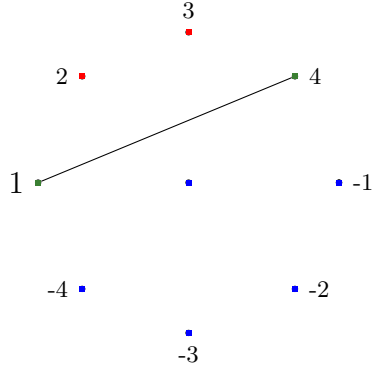
\begin{figure}
\centering
\begin{tikzpicture}
\draw [fill] circle (1pt);
	
	\foreach \a in {0,45,...,360} {
		\draw[fill] (\a:2cm) circle (1pt); 
		}
		
\node[inner sep=1, fill=OliveGreen, label=right:{\scriptsize{4}}] (b) at (45:2) {};
	\node[inner sep=1, fill=red, label=above:{\scriptsize{3}}] (c) at (90:2) {};
	\node[inner sep=1, fill=red, label=left:{\scriptsize{2}}] (d) at (135:2) {};
	\node[inner sep=1, fill=OliveGreen, label=left:{$\scriptsize{1}$}] (g) at (180:2) {};
	\node[inner sep=1, fill=blue, label=left:{\scriptsize{-4}}] (h) at (45:-2) {};
	\node[inner sep=1, fill=blue, label=below:{\scriptsize{-3}}] (i) at (90:-2) {};
	\node[inner sep=1, fill=blue, label=right:{\scriptsize{-2}}] (j) at (135:-2){};
	\node[inner sep=1, fill=blue, label=right:{\scriptsize{-1}}] (m) at (360:2) {};

	\node[inner sep=1, fill=blue, label=right:{}] (o) at (0:0) {};
\path[every node/.style={font=\sffamily\small}]
		(g) edge[color=black] node [midway] {} (b);
				
\end{tikzpicture}
\caption{Here, $\Sigma_n$ is partitioned into three sets: $\{i,j\}=\{1,4\},$ $S = \{2,3\}$, and $R=\{-1,-2,-3,-4,\pm5\}$. So $\text{supp}(\gamma) = \{1,2,3,4\}.$}
\label{fig:support_fig2}
\end{figure}

\begin{lem}\label{common_endpt_lemma}
Let $[\gamma], [\delta]$ be equivalence classes in $\text{Eq}(\Sigma_n)$ that share an endpoint $\ell$ (and therefore also share $-\ell$) and do not form a bad pair. Assume $\ell \in \{\pm 1, \ldots, \pm(n-1)\}$ or the essential curves of $[\gamma]$ and $[\delta]$ share $\ell = \pm n$. Then $(V(\gamma), V(\delta))$ is an exceptional sequence and $(V(\delta), V(\gamma))$ is not if and only if $[\delta]$ is clockwise from $[\gamma]$. On the other hand, assume that $[\gamma]$ and $[\delta]$ share $\ell = \pm n$, but $\ell$ is not shared by the essential curves. Then $(V(\gamma), V(\delta))$ is an exceptional sequence and $(V(\delta), V(\gamma))$ is not if and only if $[\delta]$ is counterclockwise from $[\gamma]$.
\end{lem}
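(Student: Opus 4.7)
My strategy is to reduce the claim to a sequence of Hom and Euler-form computations using the lemma immediately preceding this one. That lemma tells us $(V(\gamma), V(\delta))$ is exceptional if and only if $\langle \textbf{dim}(V(\delta)), \textbf{dim}(V(\gamma))\rangle = 0$, and it also implies that for distinct indecomposables $U, V$ of $Q^n$ a single nonzero $\text{Hom}(U,V)$ forces $\text{Hom}(V,U) = 0$ and $\text{Ext}^1(U,V) = 0$. Thus exhibiting one nonzero morphism between $V(\gamma)$ and $V(\delta)$ automatically disposes of three of the four conditions needed to decide the exceptionality of $(V(\gamma), V(\delta))$ and $(V(\delta), V(\gamma))$ at once.

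\textbf{Case structure and the natural morphism.} I would first handle the principal case $\ell \in \{\pm 1, \ldots, \pm (n-1)\}$, splitting on the three shapes of $V(\gamma)$ from Definition~\ref{def:2}: thin interval representations (Definition~\ref{def:2}(a)), representations with a $\mathbb{K}^2$ block (Definition~\ref{def:2}(b)), and puncture-type representations (Definition~\ref{def:2}(c)). In each subcase the condition ``$[\delta]$ clockwise from $[\gamma]$ about $\ell$'' translates into a concrete inequality between the non-shared endpoints of the essential curves. Assuming $[\delta]$ is clockwise from $[\gamma]$, I would construct an explicit nonzero morphism $V(\gamma) \to V(\delta)$: a natural sub-interval inclusion or quotient in the thin case, a map identifying a rank-one constituent of a $\mathbb{K}^2$ block in the middle case, and an analogous map in the puncture case. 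By the preceding lemma, this morphism forces $\text{Hom}(V(\delta), V(\gamma)) = 0$ and $\text{Ext}^1(V(\gamma), V(\delta)) = 0$, which covers three of the four conditions; in particular, $(V(\delta), V(\gamma))$ already fails to be exceptional because $\text{Hom}(V(\gamma), V(\delta)) \neq 0$.

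\textbf{The remaining $\text{Ext}$ vanishing and the $\pm n$ case.} It remains to show $\text{Ext}^1(V(\delta), V(\gamma)) = 0$. By Auslander--Reiten duality and Proposition~\ref{prop_tau_action}, this is $\text{Hom}(V(\gamma), V(\varrho(\delta))) = 0$ when $V(\delta)$ is not projective, and is automatic otherwise. The operation $\varrho$ rotates both endpoints of $\delta$ counterclockwise along the boundary, so $\varrho(\delta)$ no longer shares $\ell$ with $\gamma$. A case-by-case check using Definition~\ref{def:2} then confirms that no nonzero morphism $V(\gamma) \to V(\varrho(\delta))$ exists, typically because the supports become disjoint or a structure-map compatibility at a vertex incident to $\ell$ is violated. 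For $\ell = \pm n$ the argument is analogous, but with a twist: when only non-essential curves share $\ell = \pm n$, the bar operation $\overline{(\cdot)}$ interchanges $n$ and $-n$, which reverses the rotational sense and produces the ``counterclockwise'' clause in the second half of the lemma.

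\textbf{Main obstacle.} The principal difficulty is the combinatorial explosion of subcases from Definition~\ref{def:2}'s three representation shapes paired in all compatible ways, together with the sub-situations at $\pm n$. The argument follows a uniform template---exhibit the natural nonzero morphism and then verify a single Hom vanishing after applying $\varrho$---but bookkeeping endpoints, the choice of essential versus non-essential representatives, and the orientation conventions near the puncture demand careful attention.
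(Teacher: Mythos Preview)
Your plan has a genuine gap in the step where you assert that, whenever $[\delta]$ is clockwise from $[\gamma]$, you can construct a nonzero morphism $V(\gamma)\to V(\delta)$. This is false in several of the configurations that actually occur. A clean counterexample already appears in the simplest ``thin'' case: take $\ell\in\{2,\ldots,n-2\}$, let $\delta$ connect $\ell$ to some $i$ with $1\le i<\ell$, and let $\gamma$ connect $\ell$ to some $j$ with $\ell<j\le n-1$. Then $[\delta]$ is clockwise from $[\gamma]$ about $\ell$, but $V(\delta)$ is the interval representation supported on $\{i,\ldots,\ell-1\}$ while $V(\gamma)$ is supported on $\{\ell,\ldots,j-1\}$. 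These supports are disjoint, so $\text{Hom}(V(\gamma),V(\delta))=0=\text{Hom}(V(\delta),V(\gamma))$. What obstructs $(V(\delta),V(\gamma))$ from being exceptional here is not a Hom but a nontrivial extension: there is a nonsplit short exact sequence $0\to V(\delta)\to V(\nu)\to V(\gamma)\to 0$ with $\nu$ the curve from $i$ to $j$, so $\text{Ext}^1(V(\gamma),V(\delta))\neq 0$. The preceding lemma you invoke then forces $\text{Hom}(V(\gamma),V(\delta))=0$, so your proposed morphism cannot exist. The paper's proof confronts exactly this phenomenon: in roughly half of the configurations it exhibits a nonzero Hom, and in the other half it instead writes down an explicit nonsplit extension to witness the nonvanishing Ext.

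Your alternative for the ``$(V(\gamma),V(\delta))$ is exceptional'' direction---replacing the paper's direct Euler-form computation by the Auslander--Reiten duality check $\text{Hom}(V(\gamma),V(\varrho(\delta)))=0$---is a legitimate route, and in principle the same idea could repair the gap above: to show $(V(\delta),V(\gamma))$ is not exceptional in the disjoint-support cases, you could instead verify $\text{Hom}(V(\delta),V(\varrho(\gamma)))\neq 0$, which is equivalent to $\text{Ext}^1(V(\gamma),V(\delta))\neq 0$. But as written your proposal does not do this, and the template ``build a nonzero morphism $V(\gamma)\to V(\delta)$ in every subcase'' fails.
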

\begin{proof}
First, assume that $\ell \in \{1, \ldots, n-1\}$, that $\ell$ is an endpoint of $\gamma$ and $\delta$, and that $\text{supp}(\gamma)\cap \text{supp}(\delta) = \{\ell\}$. Then the equivalence classes $[\gamma]$ and $[\delta]$ appear in one of the configurations in Figure~\ref{fig:two_cases}.

\begin{figure}[htb!]
\[\begin{array}{cccccc}
\begin{tikzpicture}
\draw [fill] circle (1pt);
	
	\foreach \a in {0,30,...,360} {
		\draw[fill] (\a:2cm) circle (1pt); 
		}
		
	\node[inner sep=0.5, fill=none, label=above:{$\scriptsize{-i}$}] (b) at (30:2) {};
	\node[inner sep=0.5, fill=none, label=above:{$\scriptsize{j}$}] (c) at (60:2) {};
	\node[inner sep=0.5, fill=none, label=below:{}] (d) at (90:2) {};
	\node[inner sep=0.5, fill=none, label=above:{$\scriptsize{\ell}$}] (e) at (120:2) {};
	\node[inner sep=0.5, fill=none, label=right:{}] (f) at (150:2) {};
	\node[inner sep=0.5, fill=none, label=left:{$\scriptsize{1}$}] (g) at (180:2) {};
	\node[inner sep=0.5, fill=none, label=left:{$\scriptsize{i}$}] (h) at (210:2) {};
	\node[inner sep=0.5, fill=none, label=below:{$\scriptsize{-j}$}] (i) at (240:2) {};
	\node[inner sep=0.5, fill=none, label=right:{}] (j) at (270:2){};
	\node[inner sep=0.5, fill=none, label=below:{$\scriptsize{-\ell}$}] (k) at (300:2) {};
	\node[inner sep=0.5, fill=none, label=right:{}] (l) at (330:2) {};
	\node[inner sep=0.5, fill=none, label=right:{}] (m) at (360:2) {};
\path[every node/.style={font=\sffamily\small}]
		(e) edge[color=blue] node [midway] {} (h);
\path[every node/.style={font=\sffamily\small}]
		(e) edge[color=black] node [midway] {} (c);	
\path[every node/.style={font=\sffamily\small}]
		(k) edge[color=black] node [midway] {} (i);
\path[every node/.style={font=\sffamily\small}]
		(k) edge[color=blue] node [midway] {} (b);
			
	\node[inner sep=0.5, fill=none, label=center:{$	\scriptsize{\delta}$}] (a) at (165:1.2) {};
	\node[inner sep=0.5, fill=none, label=center:{$	\scriptsize{\gamma}$}] (a) at (90:1.4) {};			
	
\end{tikzpicture}
& &
\begin{tikzpicture}
\draw [fill] circle (1pt);
	
	\foreach \a in {0,30,...,360} {
		\draw[fill] (\a:2cm) circle (1pt); 
		}
		
	\node[inner sep=0.5, fill=none, label=above:{$\scriptsize{-i}$}] (b) at (30:2) {};
	\node[inner sep=0.5, fill=none, label=above:{}] (c) at (60:2) {};
	\node[inner sep=0.5, fill=none, label=below:{}] (d) at (90:2) {};
	\node[inner sep=0.5, fill=none, label=above:{$\scriptsize{\ell}$}] (e) at (120:2) {};
	\node[inner sep=0.5, fill=none, label=right:{}] (f) at (150:2) {};
	\node[inner sep=0.5, fill=none, label=left:{$\scriptsize{1}$}] (g) at (180:2) {};
	\node[inner sep=0.5, fill=none, label=left:{$\scriptsize{i}$}] (h) at (210:2) {};
	\node[inner sep=0.5, fill=none, label=below:{}] (i) at (240:2) {};
	\node[inner sep=0.5, fill=none, label=right:{}] (j) at (270:2){};
	\node[inner sep=0.5, fill=none, label=below:{$\scriptsize{-\ell}$}] (k) at (300:2) {};
	\node[inner sep=0.5, fill=none, label=right:{}] (l) at (330:2) {};
	\node[inner sep=0.5, fill=none, label=right:{}] (m) at (360:2) {};
	\node[inner sep=0.5, fill=none, label=right:{}] (o) at (0:0) {};
\path[every node/.style={font=\sffamily\small}]
		(e) edge[color=blue] node [midway] {} (h);
\path[every node/.style={font=\sffamily\small}]
		(e) edge[color=black] node [midway] {} (o);	
\path[every node/.style={font=\sffamily\small}]
		(k) edge[color=black] node [midway] {$\bullet$} (o);
\path[every node/.style={font=\sffamily\small}]
		(k) edge[color=blue] node [midway] {} (b);

	\node[inner sep=0.5, fill=none, label=center:{$	\scriptsize{\delta}$}] (a) at (165:1.2) {};
	\node[inner sep=0.5, fill=none, label=center:{$	\scriptsize{\gamma}$}] (a) at (100:.8) {};
				
\end{tikzpicture}
\\ (a) & & (b) \\
\\
\begin{tikzpicture}
\draw [fill] circle (1pt);
	
	\foreach \a in {0,30,...,360} {
		\draw[fill] (\a:2cm) circle (1pt); 
		}
		
	\node[inner sep=0.5, fill=none, label=right:{$\scriptsize{n-1}$}] (b) at (30:2) {};
	\node[inner sep=0.5, fill=none, label=above:{}] (c) at (60:2) {};
	\node[inner sep=0.5, fill=none, label=above:{$\scriptsize{\ell}$}] (d) at (90:2) {};
	\node[inner sep=0.5, fill=none, label=left:{$\scriptsize{i}$}] (e) at (120:2) {};
	\node[inner sep=0.5, fill=none, label=left:{$\scriptsize{-j}$}] (f) at (150:2) {};
	\node[inner sep=0.5, fill=none, label=left:{$\scriptsize{1}$}] (g) at (180:2) {};
	\node[inner sep=0.5, fill=none, label=left:{}] (h) at (210:2) {};
	\node[inner sep=0.5, fill=none, label=below:{}] (i) at (240:2) {};
	\node[inner sep=0.5, fill=none, label=below:{$\scriptsize{-\ell}$}] (j) at (270:2){};
	\node[inner sep=0.5, fill=none, label=below:{$\scriptsize{-i}$}] (k) at (300:2) {};
	\node[inner sep=0.5, fill=none, label=right:{$\scriptsize{j}$}] (l) at (330:2) {};
	\node[inner sep=0.5, fill=none, label=right:{}] (m) at (360:2) {};
	
\path[every node/.style={font=\sffamily\small}]
		(d) edge[color=blue] node [midway] {} (e);
\path[every node/.style={font=\sffamily\small}]
		(d) edge[color=black] node [midway] {} (l);	
\path[every node/.style={font=\sffamily\small}]
		(j) edge[color=black] node [midway] {} (f);
\path[every node/.style={font=\sffamily\small}]
		(j) edge[color=blue] node [midway] {} (k);	
\path[every node/.style={font=\sffamily\small}]
		(e) edge[color=OliveGreen, dashed] node [midway] {} (l);	
\path[every node/.style={font=\sffamily\small}]
		(f) edge[color=OliveGreen, dashed] node [midway] {} (k);	

	\node[inner sep=0.5, fill=none, label=center:{$	\scriptsize{\delta}$}] (a) at (105:2.2) {};
	\node[inner sep=0.5, fill=none, label=right:{$	\scriptsize{\gamma}$}] (a) at (30:1) {};		
	\node[inner sep=0.5, fill=none, label=center:{$	\scriptsize{\nu}$}] (a) at (120:1) {};
	
\end{tikzpicture} & & 
\begin{tikzpicture}
\draw [fill] circle (1pt);
	
	\foreach \a in {0,30,...,360} {
		\draw[fill] (\a:2cm) circle (1pt); 
		}
		
	\node[inner sep=0.5, fill=none, label=right:{$\scriptsize{n-1}$}] (b) at (30:2) {};
	\node[inner sep=0.5, fill=none, label=above:{}] (c) at (60:2) {};
	\node[inner sep=0.5, fill=none, label=above:{$\scriptsize{\ell}$}] (d) at (90:2) {};
	\node[inner sep=0.5, fill=none, label=left:{$\scriptsize{i}$}] (e) at (120:2) {};
	\node[inner sep=0.5, fill=none, label=left:{}] (f) at (150:2) {};
	\node[inner sep=0.5, fill=none, label=left:{$\scriptsize{1}$}] (g) at (180:2) {};
	\node[inner sep=0.5, fill=none, label=left:{}] (h) at (210:2) {};
	\node[inner sep=0.5, fill=none, label=below:{}] (i) at (240:2) {};
	\node[inner sep=0.5, fill=none, label=below:{$\scriptsize{-\ell}$}] (j) at (270:2){};
	\node[inner sep=0.5, fill=none, label=below:{$\scriptsize{-i}$}] (k) at (300:2) {};
	\node[inner sep=0.5, fill=none, label=right:{}] (l) at (330:2) {};
	\node[inner sep=0.5, fill=none, label=right:{}] (m) at (360:2) {};
	\node[inner sep=0.5, fill=none, label=right:{}] (n) at (0:0) {};
	
\path[every node/.style={font=\sffamily\small}]
		(d) edge[color=blue] node [midway] {} (e);
\path[every node/.style={font=\sffamily\small}]
		(d) edge[color=black]  node [midway] {$\bullet$} (n);	
\path[every node/.style={font=\sffamily\small}]
		(j) edge[color=black] node [midway] {} (n);
\path[every node/.style={font=\sffamily\small}]
		(j) edge[color=blue] node [midway] {} (k);	
\path[every node/.style={font=\sffamily\small}]
		(e) edge[color=OliveGreen, dashed] node [midway] {$\bullet$} (n);	
\path[every node/.style={font=\sffamily\small}]
		(k) edge[color=OliveGreen, dashed] node [midway] {} (n);

	\node[inner sep=0.5, fill=none, label=center:{$	\scriptsize{\delta}$}] (a) at (105:2.25) {};
	\node[inner sep=0.5, fill=none, label=right:{$	\scriptsize{\gamma}$}] (a) at (90:1) {};	
	\node[inner sep=0.5, fill=none, label=left:{$	\scriptsize{\nu}$}] (a) at (120:1) {};	
\end{tikzpicture}\\ 
(c) & & (d)
\end{array}\]
\caption{The possible configurations formed by $[\gamma]$ and $[\delta]$ where $\text{supp}(\gamma)\cap \text{supp}(\delta) = \{\ell\}$ and $\ell \in \{1, \ldots, n-1\}$. In $(a)$, $-(n-1)\le i<-j$ and $\ell < j < -i$. In $(b)$, $-(n-1)\le i<-\ell$ and $j = \pm n$; the configuration is considered up to applying $\overline{(\cdot)}$ to $[\gamma]$ and $[\delta]$. In $(c)$, $1 \le i < \ell$ and either $-1 \le j < -i$ or $\ell < j \le n-1$. In $(d)$, $1 \le i < \ell$ and $j = \pm n$; the configuration is considered up to applying $\overline{(\cdot)}$ to $[\gamma]$, $[\delta]$, and $[\nu]$.}
\label{fig:two_cases}
\end{figure}
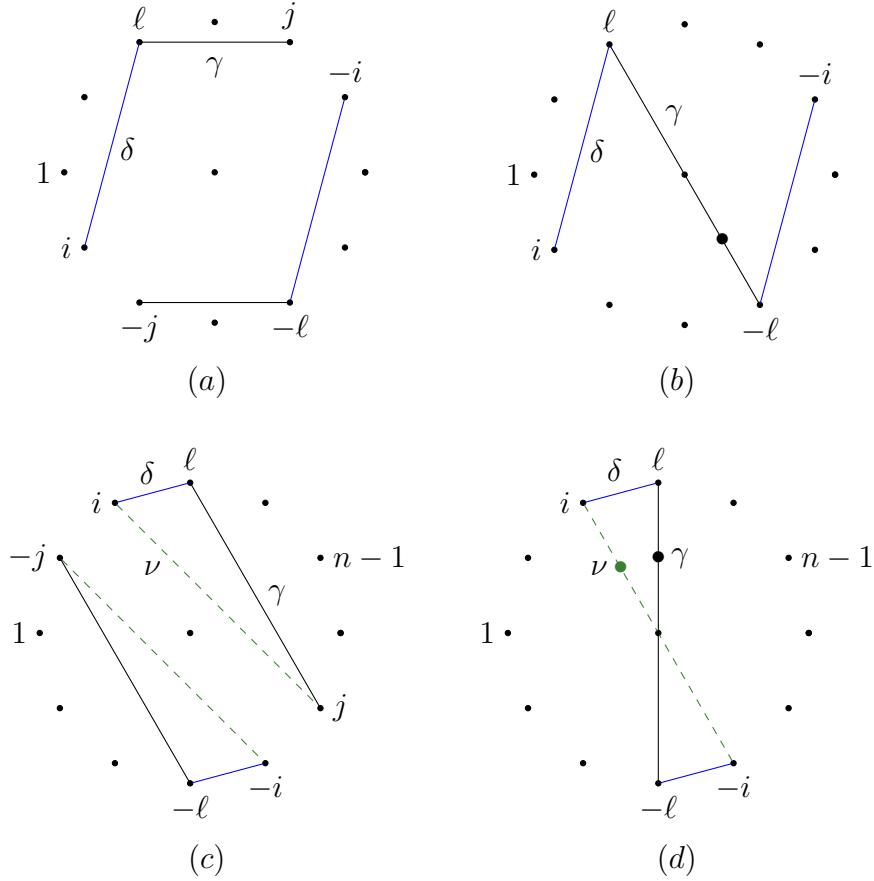

Suppose these equivalence classes appear in a configuration appearing in Figure~\ref{fig:two_cases}$(a)$ or Figure~\ref{fig:two_cases}$(b)$. Then we have that \[\text{Hom}(V(\gamma), V(\delta)) \neq 0,\] which implies that $(V(\delta), V(\gamma))$ is not an exceptional sequence. If $[\gamma]$ and $[\delta]$ appear in a configuration as in Figure~\ref{fig:two_cases}$(a)$, then we have that \[\begin{array}{cclccc} \langle \textbf{dim}(V(\delta)), \textbf{dim}(V(\gamma))\rangle & = & \dim(V(\gamma)) - \dim(V(\gamma))\\ & = & 0. \end{array}\] 
On the other hand, if $[\gamma]$ and $[\delta]$ appear in a configuration as in Figure~\ref{fig:two_cases}$(b)$, then we have that \[\begin{array}{cclccc} \langle \textbf{dim}(V(\delta)), \textbf{dim}(V(\gamma))\rangle & = & (\dim(V(\delta)) - 1) - (\dim(V(\delta)) - 1)\\ & = & 0. \end{array}\] Therefore, $(V(\gamma), V(\delta))$ is an exceptional sequence.

Now, suppose $[\gamma]$ and $[\delta]$ form a configuration appearing in Figure~\ref{fig:two_cases}$(c)$ or Figure~\ref{fig:two_cases}$(d)$. Observe that \[0 \to V(\delta) \to V(\nu) \to V(\gamma) \to 0\] is a nonsplit extension, where $[\nu]$ is the equivalence class of curves appearing in Figure~\ref{fig:two_cases}$(c)$ and Figure~\ref{fig:two_cases}$(d)$. Thus, $\text{Ext}^1(V(\gamma), V(\delta)) \neq 0$ so $(V(\delta), V(\gamma))$ is not an exceptional sequence. We also have that
\[\langle\textbf{dim}(V(\delta)),\textbf{dim}(V(\gamma))\rangle = \text{dim}(V(\delta))-\text{dim}(V(\delta))=0
\]
if $j\in \{-1,\ldots,-i+1\}$, and
\[\langle\textbf{dim}(V(\delta)),\textbf{dim}(V(\gamma))\rangle=0-0=0\]
if $j\in \{\ell +1,\ldots,n\}\cup\{-n\}$. Therefore, $(V(\gamma),V(\delta))$ is an exceptional sequence.

Next, assume that $\ell \in \{1, \ldots, n-1\}$, that $\ell$ is an endpoint of $\gamma$ and $\delta$, and that $|\text{supp}(\gamma)\cap \text{supp}(\delta)| \ge 2$. Then the equivalence classes $[\gamma]$ and $[\delta]$ appear in a configuration of the form shown in Figure~\ref{fig:four_cases_n}$(a)$, $(b)$, or $(c)$.

\begin{figure}[htb!]
\begin{subfigure}
\centering
\hfill

\begin{tikzpicture}
\draw [fill] circle (1pt);
	
	\foreach \a in {0,30,...,360} {
		\draw[fill] (\a:2cm) circle (1pt); 
		}
		
	\node[inner sep=0.5, fill=none, label=right:{$-i$}] (b) at (30:2) {};
	\node[inner sep=0.5, fill=none, label=above:{$-j$}] (c) at (60:2) {};
	\node[inner sep=0.5, fill=none, label=above:{$\scriptsize{\ell}$}] (d) at (90:2) {};
	\node[inner sep=0.5, fill=none, label=left:{}] (e) at (120:2) {};
	\node[inner sep=0.5, fill=none, label=left:{}] (f) at (150:2) {};
	\node[inner sep=0.5, fill=none, label=left:{$i$}] (h) at (210:2) {};
	\node[inner sep=0.5, fill=none, label=below:{$j$}] (i) at (240:2) {};
	\node[inner sep=0.5, fill=none, label=below:{$\scriptsize{-\ell}$}] (j) at (270:2){};
	\node[inner sep=0.5, fill=none, label=below:{\textnormal{(}a\textnormal{)}}] at (270:2.5){};
	\node[inner sep=0.5, fill=none, label=below:{}] (k) at (300:2) {};
	\node[inner sep=0.5, fill=none, label=right:{}] (l) at (330:2) {};
	\node[inner sep=0.5, fill=none, label=right:{}] (m) at (360:2) {};
	
\path[every node/.style={font=\sffamily\small}]
		(d) edge[color=blue] node [midway] {} (h);
\path[every node/.style={font=\sffamily\small}]
		(d) edge[color=black] node [midway] {} (i);	
\path[every node/.style={font=\sffamily\small}]
		(j) edge[color=black] node [midway] {} (c);
\path[every node/.style={font=\sffamily\small}]
		(j) edge[color=blue] node [midway] {} (b);	
		
	\path[every node/.style={font=\sffamily\small}]
		(h) edge[color=OliveGreen, dashed] node [midway] {} (i);
	\path[every node/.style={font=\sffamily\small}]
		(c) edge[color=OliveGreen, dashed] node [midway] {} (b);

	\node[inner sep=0.5, fill=none, label=right:{$	\scriptsize{\nu}$}] at (50:1.5) {};

	\node[inner sep=0.5, fill=none, label=center:{$	\scriptsize{\delta}$}] (a) at (150:1.2) {};
	\node[inner sep=0.5, fill=none, label=right:{$	\scriptsize{\gamma}$}] (a) at (180:1) {};		
	
\end{tikzpicture}
\end{subfigure}\hspace{10mm}
\begin{subfigure}
\centering
\begin{tikzpicture}
\draw [fill] circle (1pt);
	
	\foreach \a in {0,30,...,360} {
		\draw[fill] (\a:2cm) circle (1pt); 
		}
\node[inner sep=0.5, fill=none, label=below:{\textnormal{(}b\textnormal{)}}] at (270:2.5){};		
	\node[inner sep=0.5, fill=none, label=right:{$j$}] (b) at (30:2) {};
	\node[inner sep=0.5, fill=none, label=above:{}] (c) at (60:2) {};
	\node[inner sep=0.5, fill=none, label=above:{$\scriptsize{\ell}$}] (d) at (90:2) {};
	\node[inner sep=0.5, fill=none, label=left:{}] (e) at (120:2) {};
	\node[inner sep=0.5, fill=none, label=left:{$-i$}] (f) at (150:2) {};
	\node[inner sep=0.5, fill=none, label=left:{$-j$}] (h) at (210:2) {};
	\node[inner sep=0.5, fill=none, label=below:{}] (i) at (240:2) {};
	\node[inner sep=0.5, fill=none, label=below:{$\scriptsize{-\ell}$}] (j) at (270:2){};
	\node[inner sep=0.5, fill=none, label=below:{}] (k) at (300:2) {};
	\node[inner sep=0.5, fill=none, label=right:{$i$}] (l) at (330:2) {};
	\node[inner sep=0.5, fill=none, label=right:{}] (m) at (360:2) {};
	\node[inner sep=0.5, fill=none, label=right:{}] (n) at (0:0) {};
	
\path[every node/.style={font=\sffamily\small}]
		(d) edge[color=black] node [midway] {} (b);
\path[every node/.style={font=\sffamily\small}]
		(d) edge[color=blue]  node [midway] {} (l);	
\path[every node/.style={font=\sffamily\small}]
		(j) edge[color=blue] node [midway] {} (f);
\path[every node/.style={font=\sffamily\small}]
		(j) edge[color=black] node [midway] {} (h);

	\node[inner sep=0.5, fill=none, label=center:{$	\scriptsize{\gamma}$}] (a) at (60:1.5) {};
	\node[inner sep=0.5, fill=none, label=right:{$	\scriptsize{\delta}$}] (a) at (90:1) {};	
\end{tikzpicture}
\end{subfigure}
\vfill
\begin{subfigure}
\centering
\hfill

\begin{tikzpicture}
\draw [fill] circle (1pt);
	
	\foreach \a in {0,30,...,360} {
		\draw[fill] (\a:2cm) circle (1pt); 
		}
\node[inner sep=0.5, fill=none, label=below:{\textnormal{(}c\textnormal{)}}] at (270:2.5){};		
	\node[inner sep=0.5, fill=none, label=right:{}] (b) at (30:2) {};
	\node[inner sep=0.5, fill=none, label=above:{}] (c) at (60:2) {};
	\node[inner sep=0.5, fill=none, label=above:{$\scriptsize{\ell}$}] (d) at (90:2) {};
	\node[inner sep=0.5, fill=none, label=left:{}] (e) at (120:2) {};
	\node[inner sep=0.5, fill=none, label=left:{$-j$}] (f) at (150:2) {};
	\node[inner sep=0.5, fill=none, label=left:{}] (h) at (210:2) {};
	\node[inner sep=0.5, fill=none, label=below:{}] (i) at (240:2) {};
	\node[inner sep=0.5, fill=none, label=below:{$\scriptsize{-\ell}$}] (j) at (270:2){};
	\node[inner sep=0.5, fill=none, label=below:{}] (k) at (300:2) {};
	\node[inner sep=0.5, fill=none, label=right:{$j$}] (l) at (330:2) {};
	\node[inner sep=0.5, fill=none, label=right:{}] (m) at (360:2) {};
	\node[inner sep=0.5, fill=none, label=right:{}] (n) at (0:0) {};
	
\path[every node/.style={font=\sffamily\small}]
		(d) edge[color=blue] node [midway] {$\bullet$} (n);
\path[every node/.style={font=\sffamily\small}]
		(d) edge[color=black]  node [midway] {} (l);	
\path[every node/.style={font=\sffamily\small}]
		(j) edge[color=black] node [midway] {} (f);
\path[every node/.style={font=\sffamily\small}]
		(j) edge[color=blue] node [midway] {} (n);

	\node[inner sep=0.5, fill=none, label=center:{$	\scriptsize{\gamma}$}] (a) at (60:1.5) {};
	\node[inner sep=0.5, fill=none, label=right:{$	\scriptsize{\delta}$}] (a) at (90:1) {};
	
\end{tikzpicture}
\end{subfigure}\hspace{10mm}
\begin{subfigure}
\centering
\begin{tikzpicture}
\draw [fill] circle (1pt);
	
	\foreach \a in {0,30,...,360} {
		\draw[fill] (\a:2cm) circle (1pt); 
		}
	\node[inner sep=0.5, fill=none, label=below:{\textnormal{(}d\textnormal{)}}] at (270:2.5){};	
	\node[inner sep=0.5, fill=none, label=right:{$i$}] (b) at (30:2) {};
	\node[inner sep=0.5, fill=none, label=above:{}] (c) at (60:2) {};
	\node[inner sep=0.5, fill=none, label=above:{$j$}] (d) at (90:2) {};
	\node[inner sep=0.5, fill=none, label=left:{}] (e) at (120:2) {};
	\node[inner sep=0.5, fill=none, label=left:{}] (f) at (150:2) {};
	\node[inner sep=0.5, fill=none, label=left:{$-i$}] (h) at (210:2) {};
	\node[inner sep=0.5, fill=none, label=below:{}] (i) at (240:2) {};
	\node[inner sep=0.5, fill=none, label=below:{$-j$}] (j) at (270:2){};
	\node[inner sep=0.5, fill=none, label=below:{}] (k) at (300:2) {};
	\node[inner sep=0.5, fill=none, label=right:{}] (l) at (330:2) {};
	\node[inner sep=0.5, fill=none, label=right:{}] (m) at (360:2) {};
	\node[inner sep=0.5, fill=none, label=right:{}] (n) at (0:0) {};
	
\path[every node/.style={font=\sffamily\small}]
		(b) edge[color=blue] node [midway] {$\bullet$} (n);
\path[every node/.style={font=\sffamily\small}]
		(d) edge[color=black]  node [midway] {$\bullet$} (n);	
\path[every node/.style={font=\sffamily\small}]
		(j) edge[color=black] node [midway] {} (n);
\path[every node/.style={font=\sffamily\small}]
		(h) edge[color=blue] node [midway] {} (n);

	\node[inner sep=0.5, fill=none, label=below:{$	\scriptsize{\delta}$}] (a) at (30:1) {};
	\node[inner sep=0.5, fill=none, label=right:{$	\scriptsize{\gamma}$}] (a) at (90:1) {};

\end{tikzpicture}
\end{subfigure}
\vfill
\begin{subfigure}
\centering
\begin{tikzpicture}
	\draw [fill] circle (1pt);
	
	\foreach \a in {0,30,...,360} {
		\draw[fill] (\a:2cm) circle (1pt); 
		}
	\node[inner sep=0.5, fill=none, label=below:{\textnormal{(}e\textnormal{)}}] at (270:2.5){};	
	\node[inner sep=0.5, fill=none, label=right:{$-j$}] (b) at (30:2) {};
	\node[inner sep=0.5, fill=none, label=above:{}] (c) at (60:2) {};
	\node[inner sep=0.5, fill=none, label=above:{$i$}] (d) at (90:2) {};
	\node[inner sep=0.5, fill=none, label=left:{}] (e) at (120:2) {};
	\node[inner sep=0.5, fill=none, label=left:{}] (f) at (150:2) {};
	\node[inner sep=0.5, fill=none, label=left:{$j$}] (h) at (210:2) {};
	\node[inner sep=0.5, fill=none, label=below:{}] (i) at (240:2) {};
	\node[inner sep=0.5, fill=none, label=below:{$-i$}] (j) at (270:2){};
	\node[inner sep=0.5, fill=none, label=below:{}] (k) at (300:2) {};
	\node[inner sep=0.5, fill=none, label=right:{}] (l) at (330:2) {};
	\node[inner sep=0.5, fill=none, label=right:{}] (m) at (360:2) {};
	\node[inner sep=0.5, fill=none, label=right:{}] (n) at (0:0) {};
	
\path[every node/.style={font=\sffamily\small}]
		(b) edge[color=black] node [midway] {} (n);
\path[every node/.style={font=\sffamily\small}]
		(d) edge[color=blue]  node [midway] {$\bullet$} (n);	
\path[every node/.style={font=\sffamily\small}]
		(j) edge[color=blue] node [midway] {} (n);
\path[every node/.style={font=\sffamily\small}]
		(h) edge[color=black] node [midway] {$\bullet$} (n);	
		
	\path[every node/.style={font=\sffamily\small}]
		(h) edge[color=OliveGreen, dashed] node [midway] {} (d);
	\path[every node/.style={font=\sffamily\small}]
		(j) edge[color=OliveGreen, dashed] node [midway] {} (b);

	\node[inner sep=0.5, fill=none, label=right:{$	\scriptsize{\nu}$}] at (150:1.5) {};

	\node[inner sep=0.5, fill=none, label=below:{$	\scriptsize{\gamma}$}] (a) at (30:-1) {};

	\node[inner sep=0.5, fill=none, label=below:{$	\scriptsize{\gamma^\prime}$}] (a) at (30:1) {};
	\node[inner sep=0.5, fill=none, label=right:{$	\scriptsize{\delta}$}] (a) at (90:1) {};
\end{tikzpicture}
\end{subfigure}
\caption{In the case where $\text{supp}(\gamma)\cap \text{supp}(\delta)\neq\emptyset$, there are five possible configurations of curves. Here, $i,j$ vary and may be positive or negative in $(a)$, $(b)$, and $(c)$. In $(d)$, $1 \le j < i \le n-1$. In $(e)$, $1 \le i \le n-1$ and $-(n-1) \le j < -i$. }
\label{fig:four_cases_n}
\end{figure}
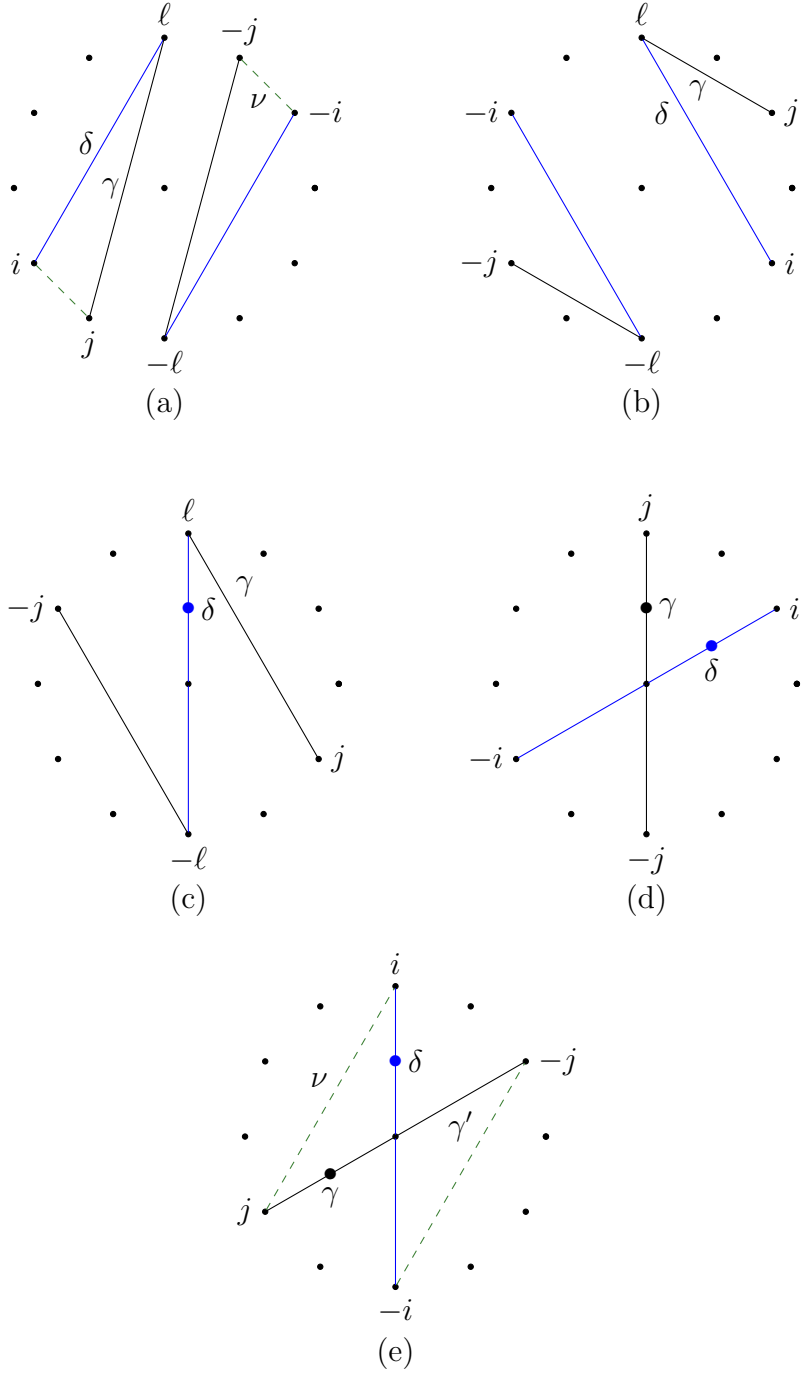

Assume that $[\gamma]$ and $[\delta]$ appear in a configuration of the form shown in Figure~\ref{fig:four_cases_n}$(a)$. If $1\le j < i < \ell$ or $-(n-1) \le i < j < -\ell$, then we have that $\text{Hom}(V(\gamma), V(\delta)) \neq 0$. In the former case, 
\[\langle\textbf{dim}(V(\delta)),\textbf{dim}(V(\gamma))\rangle = \text{dim}(V(\delta)) - \text{dim}(V(\delta)) = 0.\] In the latter case, 
\[\begin{array}{lll} 
\langle\textbf{dim}(V(\delta)),\textbf{dim}(V(\gamma))\rangle & = & \text{dim}(V(\gamma)) + 2|\{i\mid \text{dim}(V(\delta)_i) = \text{dim}(V(\gamma)_i) = 2\}| \\ & & -\text{dim}(V(\gamma)) - 2|\{a\mid \text{dim}(V(\delta)_{s(a)}) = \text{dim}(V(\gamma)_{t(a)}) = 2\}| \\ & = & 0.
\end{array}\]


If $1 \le i < \ell$ and $-(n-1) \le j < -\ell$, then there is a nonsplit extension \[0 \to V(\delta) \to V(\nu) \to V(\gamma) \to 0\] where $[\nu]$ is the equivalence class of curves appearing Figure~\ref{fig:four_cases_n}$(a)$. Thus $\text{Ext}^1(V(\gamma), V(\delta)) \neq 0$. Also, $\langle \textbf{dim}(V(\delta)),\textbf{dim}(V(\gamma))\rangle = 0$. Consequently, $(V(\delta), V(\gamma))$ is not an exceptional sequence, and $(V(\gamma), V(\delta))$ is an exceptional sequence.

Now, assume that $[\gamma]$ and $[\delta]$ appear in a configuration of the form shown in Figure~\ref{fig:four_cases_n}$(b)$. For each $i,j$, we have $\text{Hom}(V(\gamma), V(\delta)) \neq 0$. If $\ell < j < i \le n-1$, then $\langle \textbf{dim}(V(\delta)),\textbf{dim}(V(\gamma))\rangle = \text{dim}(V(\gamma)) - \text{dim}(V(\gamma)) = 0$. If $\ell < j \le n-1$ and $-\ell < i \le -1$, then \[\langle \textbf{dim}(V(\delta)),\textbf{dim}(V(\gamma))\rangle = 2\text{dim}(V(\gamma)) - 2\text{dim}(V(\gamma)) = 0.\] If $-\ell < i < j \le -1$, then 
\[\begin{array}{lll}
\langle \textbf{dim}(V(\delta)),\textbf{dim}(V(\gamma))\rangle & = & \text{dim}(V(\delta)) + 2|\{i \mid \text{dim}(V(\delta)_i) = 2\}| \\ & & -\text{dim}(V(\delta)) -2|\{a \mid \text{dim}(V(\delta)_{s(a)}) = 2\}|\\ & = & 0. 
\end{array}\] Thus, $(V(\delta), V(\gamma))$ is not an exceptional sequence, and $(V(\gamma), V(\delta))$ is an exceptional sequence.


Next, assume that $[\gamma]$ and $[\delta]$ appear in a configuration of the form shown in Figure~\ref{fig:four_cases_n}$(c).$ Here $i = \pm n$, and the configuration in Figure~\ref{fig:four_cases_n}$(c)$ is considered up to applying $\overline{(\cdot)}$ to $[\gamma]$ and $[\delta]$. For each $j$, we have $\text{Hom}(V(\gamma), V(\delta)) \neq 0.$ If $\ell < j \le n-1$, then \[\langle \textbf{dim}(V(\delta)), \textbf{dim}(V(\gamma))\rangle = \text{dim}(V(\gamma)) - \text{dim}(V(\gamma)) = 0.\] If $-\ell < j \le -1$, then 
\[\begin{array}{lllllll}\langle \textbf{dim}(V(\delta)), \textbf{dim}(V(\gamma)) \rangle & = & \text{dim}(V(\delta)) + |\{i \mid \text{dim}(V(\gamma)_i) = 2\}| \\ & & -\text{dim}(V(\delta)) -|\{a \mid \text{dim}(V(\gamma)_{t(a)})=2 \text{ and } \text{dim}(V(\delta)_{s(a)})\neq 0\}| \\ & = & 0.
\end{array}\]
We conclude that $(V(\delta), V(\gamma))$ is not an exceptional sequence, and $(V(\gamma), V(\delta))$ is an exceptional sequence.


The final case we consider is when $\gamma$ and $\delta$ share $\ell = n$ or $\ell = -n$. We complete the proof under the assumption that $\ell = -n$, and the relevant configurations appear in Figure~\ref{fig:four_cases_n}$(d)$ and $(e)$. If $1 \le j < i \le n-1$, as in Figure~\ref{fig:four_cases_n}$(d)$, then $\text{Hom}(V(\gamma), V(\delta)) \neq 0$ and \[\langle \textbf{dim}(V(\delta)), \textbf{dim}(V(\gamma)) \rangle = \text{dim}(V(\delta)) - \text{dim}(V(\delta)) = 0.\]


On the other hand, if $1 \le i \le n-1$ and $-(n-1) \le j < -i$, as in Figure~\ref{fig:four_cases_n}$(e)$, then there exists the nonsplit extension  $0 \to V(\delta) \to V(\nu) \to V(\gamma) \to 0$. Therefore, $\text{Ext}^1(V(\gamma), V(\delta) \neq 0$. We also have that \[\langle \textbf{dim}(V(\delta)), \textbf{dim}(V(\gamma))\rangle = (\text{dim}(V(\gamma)) - 1) - (\text{dim}(V(\gamma)) - 1) = 0.\] We conclude that $(V(\delta), V(\gamma))$ is not an exceptional sequence and $(V(\gamma), V(\delta))$ is an exceptional sequence.
\end{proof}

\begin{lem}\label{bad_pair_classification}
Let $[\gamma]$ and $[\delta]$ be two equivalence classes of curves that form a bad pair. Then neither $(V(\gamma),V(\delta))$ nor $(V(\delta),V(\gamma))$ are exceptional sequences. 
\end{lem}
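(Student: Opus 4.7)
The plan is to apply the Euler form criterion from the lemma immediately preceding the statement: $(U, V)$ is exceptional if and only if $\langle \textbf{dim}(V), \textbf{dim}(U) \rangle = 0$. Consequently, it suffices to show that both $\langle \textbf{dim}(V(\gamma)), \textbf{dim}(V(\delta))\rangle$ and $\langle \textbf{dim}(V(\delta)), \textbf{dim}(V(\gamma))\rangle$ are nonzero whenever $[\gamma]$ and $[\delta]$ form a bad pair. Equivalently, by the exclusive-vanishing lemma that precedes it, it suffices in each direction to exhibit either a nonzero morphism or a nonzero $\text{Ext}^1$. The proof is organized by configuration analysis in the spirit of Lemma~\ref{common_endpt_lemma}, but now reaches the opposite conclusion.

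For the \emph{same endpoints} case, the constraint $[\delta] \neq [\overline{\gamma}]$ forces the four-element endpoint set to avoid $\pm n$, so we may take the essential curve of $[\gamma]$ to connect $i$ and $j$ with $1 \leq i < j \leq n-1$ (Definition~\ref{def:2}(a)) and the essential curve of $[\delta]$ to connect $i$ and $-j$ (Definition~\ref{def:2}(b)). The assignment of the identity on $\mathbb{K} = V(\gamma)_k = V(\delta)_k$ for each $k \in \{i, \ldots, j-1\}$, and zero elsewhere, defines a valid inclusion $V(\gamma) \hookrightarrow V(\delta)$: the compatibility relation at the arrow $j \to j-1$ holds because $V(\gamma)_j = 0$. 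Hence $\text{Hom}(V(\gamma), V(\delta)) \neq 0$. Next, $V(\delta)$ is non-projective by inspection of Example~\ref{proj_and_inj_repns} (the curve of $[\delta]$ connects $i$ to $-j$ with $j \in \{2, \ldots, n-1\}$, whereas every projective-indecomposable curve connects vertex $1$ either to another positive boundary vertex or to $\pm n$), so by Proposition~\ref{prop_tau_action}, $\tau V(\delta) \simeq V(\varrho(\delta))$, whose essential curve connects $j-1$ and $n-1$. A second overlap argument at vertex $j-1$ gives a nonzero morphism $V(\gamma) \to V(\varrho(\delta))$, and Auslander--Reiten duality then supplies $\text{Ext}^1(V(\delta), V(\gamma)) \neq 0$. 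Thus neither ordering is exceptional; the two Euler pairings may independently be verified to equal $\pm 1$.

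For the \emph{crossing} case, we argue by configuration analysis along the same lines as in Lemma~\ref{common_endpt_lemma}. The interior intersection of an essential curve of one class with an essential curve or partner of the other can be enumerated according to whether either endpoint is $\pm n$, whether the classes additionally share a boundary endpoint, and the relative cyclic positions of the four endpoints on $\partial \Sigma_n$. In each sub-configuration, the crossing admits a geometric smoothing producing one or two new equivalence classes whose associated representations fit into a nonsplit short exact sequence of the form $0 \to V(\gamma) \to V(\nu) \to V(\delta) \to 0$ (or with the roles of $\gamma$ and $\delta$ reversed); this demonstrates $\text{Ext}^1 \neq 0$ in one direction. The overlap of supports forced by the crossing simultaneously yields a nonzero Hom in a consistent direction, exactly analogously to the Hom constructions in the preceding lemma.

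The main obstacle is the sheer breadth of the crossing case: crossings can involve the puncture through Definition~\ref{def:2}(b) and Definition~\ref{def:2}(c) representations in many combinations, and the $\overline{(\cdot)}$ operation generates additional subcases. Each individual verification reduces to a routine Euler-form computation or to the identification of an explicit extension via a smoothing curve, but the enumeration itself is the principal bookkeeping burden.
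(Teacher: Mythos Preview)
Your strategy matches the paper's: both show, for every bad-pair configuration, that $\text{Hom}$ is nonzero in one direction and $\text{Ext}^1$ is nonzero in the other, so that neither ordering is exceptional. The paper organises the analysis by the types of the four endpoints (three main cases, Figures~\ref{fig_bad1}--\ref{fig_bad3}) and in each sub-configuration writes down explicit maps and a nonsplit short exact sequence built from the smoothing curves $\mu,\nu$. Your use of Auslander--Reiten duality $\text{Ext}^1(V(\delta),V(\gamma))\simeq D\text{Hom}(V(\gamma),\tau V(\delta))$ in place of an explicit extension is a legitimate variant.

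There are, however, two genuine gaps. First, in the same-endpoints case your computation of $\varrho(\delta)$ is only correct when $i=1$: for $i\ge 2$ the curve $\delta$ connects $i$ to $-j$ with both endpoints on the boundary, so $\varrho(\delta)$ connects $i-1$ to $-(j-1)$, not $j-1$ to $n-1$. A nonzero morphism $V(\gamma)\to V(\varrho(\delta))$ still exists (it is supported at vertex $j-1$, mapping $\mathbb{K}$ into the $\mathbb{K}^2$ component via $[a,-a]^T$), but your ``overlap at $j-1$'' sentence does not establish this; you must actually check the commutativity constraint at the arrow $i\to i-1$, which forces all lower components of $\theta$ to vanish and constrains $\theta_{j-1}$ to the anti-diagonal.

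Second, and more seriously, the crossing case is not proved---you describe the mechanism (smoothings yield extensions, support overlaps yield Homs) but carry out none of the enumeration. This is not a formality: the paper devotes three cases and five sub-configurations (including the delicate $i_1=i_2$ situation of Figure~\ref{fig_bad2}$(b)$, where the middle term of the extension has three summands $V(\mu)\oplus V(\overline{\mu})\oplus V(\nu)$, and the degenerate $j_1=-j_2$ sub-sub-case where $V(\nu)$ drops out) to this analysis. Acknowledging that ``the enumeration is the principal bookkeeping burden'' is not the same as discharging it; until each configuration is checked, the lemma is not proved.
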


\begin{proof}
Assume that $\gamma$ and $\delta$ are the essential curves in the equivalence classes $[\gamma]$ and $[\delta]$, respectively. We prove the result in each of the following cases:
\begin{enumerate}
\item\label{bad1} both endpoints of the curves $\gamma$ and $\delta$ are in $\{1, \ldots, n\}$ or the curve $\gamma$ has both its endpoints in $\{1,\ldots, n-1\}$ and the curve $\delta$ has one endpoint in $\{1,\ldots, n-1\}$ and one endpoint is $-n$;
\item\label{bad2} both endpoints of the curve $\gamma$ are in $\{1,\ldots, n\}$, one endpoint of $\delta$ is in $\{1, \ldots, n-1\}$, and the other is in $\{-1,\ldots, -(n-1)\}$;
\item\label{bad3} the curves $\gamma$ and $\delta$ each have one endpoint in $\{1,\ldots, n-1\}$, the curve $\gamma$ has one endpoint in $\{-1,\ldots, -(n-1)\}$, and the curve $\delta$ has one endpoint in $\{-1,\ldots, -n\}$.
\end{enumerate} 

Throughout the proof, we let $i_1$ and $j_1$ denote the endpoints of $\gamma$ and $i_2$ and $j_2$ denote the endpoints of $\delta$. 


\textit{Case~\ref{bad1}}: We may reduce to examining the configurations of curves shown in Figure~\ref{fig_bad1}. In this situation, we have the following morphisms of representations
\[\begin{array}{rccrcccccccccc}
p_1: & V(\gamma) {\twoheadrightarrow} V(\mu_1) & & \iota_1: & V(\mu_1) \hookrightarrow V(\delta)\\
{\iota_2}: & V(\gamma) {\hookrightarrow} V(\mu_2) & & p_2: & V({\mu_2}) \twoheadrightarrow V(\delta). \\
\end{array}\]
Therefore, $\iota_1\circ p_1 \neq 0$ so $\text{Hom}(V(\gamma), V(\delta)) \neq 0$. We also obtain the nonsplit extension
\[\begin{tikzcd}[ampersand replacement=\&]
0 \ar[r] \& V(\gamma) \ar[r, "{\tiny\left[\begin{array}{c} p_1 \\ {\iota_2} \end{array}\right]}"]
    \&  V(\mu_1)\oplus V({\mu_2}) \ar[r, "{\tiny\left[\begin{array}{r} -\iota_1\\ {p_2} \end{array}\right]^T}"]  \& V(\delta) \ar[r] \& 0,
\end{tikzcd}\]
which shows that $\text{Ext}^1(V(\delta),V(\gamma)) \neq 0$.

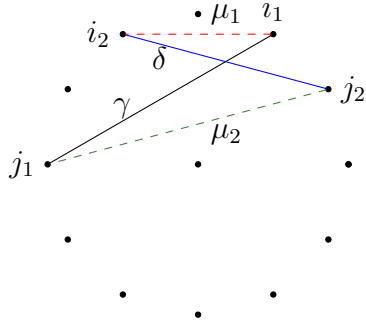
\begin{figure}
\begin{tikzpicture}
\draw [fill] circle (1pt);

	\foreach \a in {0,30,...,360} {
		\draw[fill] (\a:2cm) circle (1pt); 
		}
	
	\node[inner sep=0.5, fill=none, label=above:{}] (a) at (0:0) {};
	\node[inner sep=0.5, fill=none, label=right:{$j_2$}] (b) at (30:2) {};
	\node[inner sep=0.5, fill=none, label=above:{$i_1$}] (c) at (60:2) {};
	\node[inner sep=0.5, fill=none, label=below:{}] (d) at (90:2) {};
	\node[inner sep=0.5, fill=none, label=left:{$i_2$}] (e) at (120:2) {};
	\node[inner sep=0.5, fill=none, label=left:{}] (f) at (150:2) {};
	\node[inner sep=0.5, fill=none, label=left:{$j_1$}] (g) at (180:2) {};
	\node[inner sep=0.5, fill=none, label=left:{}] (h) at (210:2) {};
	\node[inner sep=0.5, fill=none, label=below:{}] (i) at (240:2) {};
	\node[inner sep=0.5, fill=none, label=right:{}] (j) at (270:2){};
	\node[inner sep=0.5, fill=none, label=below:{}] (k) at (300:2) {};
	\node[inner sep=0.5, fill=none, label=right:{}] (l) at (330:2) {};
	\node[inner sep=0.5, fill=none, label=right:{}] (m) at (360:2) {};

\path[every node/.style={font=\sffamily\small}]
		(c) edge[color=black] node [midway] {} (g);	
\path[every node/.style={font=\sffamily\small}]
		(e) edge[color=blue] node [midway] {} (b);

\path[every node/.style={font=\sffamily\small}]
		(b) edge[color=OliveGreen, dashed] node [midway] {} (g);	
\path[every node/.style={font=\sffamily\small}]
		(c) edge[color=red, dashed] node [midway] {} (e);

	\node[inner sep=0.5, fill=none, label=right:{$\gamma$}] at (150:1.5) {};
	\node[inner sep=0.5, fill=none, label=center:{$\delta$}] at (110:1.5) {};

	\node[inner sep=0.5, fill=none, label=right:{$\mu_1$}] at (90:2) {};	
	\node[inner sep=0.5, fill=none, label=right:{$\mu_2$}] at (90:0.4) {};
	
\end{tikzpicture}
\centering
\caption{The endpoint $j_2$ belongs to $\{i_1+1, \ldots, n\}\cup \{-n\}$. All other endpoints belong to $\{1, \ldots, n\}$ and satisfy $j_1 < i_2 < i_1$. When $j_2 \neq -n$, we also have that $i_1 < j_2$.} 
\label{fig_bad1}
\end{figure}

\textit{Case~\ref{bad2}}: Figure~\ref{fig_bad2} shows the possible configurations of curves that can appear in this case. In the situation of Figure~\ref{fig_bad2}$(a)$, we have the following morphisms of representations
\[\begin{array}{rccrcccccccccc}
p_1: & V(\gamma) {\twoheadrightarrow} V(\mu_1) & & \iota_1: & V(\mu_1) \hookrightarrow V(\delta)\\
{\iota_2}: & V(\gamma) {\hookrightarrow} V(\mu_2) & & p_2: & V({\mu_2}) \twoheadrightarrow V(\delta). \\
\end{array}\]
It follows that $\iota_1\circ p_1 \neq 0$ so $\text{Hom}(V(\gamma),V(\delta)) \neq 0$. We also obtain the following nonsplit extension
\[\begin{tikzcd}[ampersand replacement=\&]
0 \ar[r] \& V(\gamma) \ar[r, "{\tiny\left[\begin{array}{c} p_1 \\ {\iota_2} \end{array}\right]}"]
    \&  V(\mu_1)\oplus V({\mu_2}) \ar[r, "{\tiny\left[\begin{array}{r} -\iota_1\\ {p_2} \end{array}\right]^T}"]  \& V(\delta) \ar[r] \& 0,
\end{tikzcd}\]
which shows that $\text{Ext}^1(V(\delta),V(\gamma)) \neq 0$. In the situation of Figure~\ref{fig_bad2}$(b)$, we have the following morphisms of representations
\[\begin{array}{rccrccccccccc}
\iota: & V(\gamma) {\hookrightarrow} V(\mu) & & \iota^\prime: & V(\mu) \hookrightarrow V(\delta)\\
\overline{\iota}: & V(\gamma) {\hookrightarrow} V(\overline{\mu}) & & \overline{\iota}^\prime: & V(\overline{\mu}) \hookrightarrow V(\delta)   \\
q: & V(\gamma) {\twoheadrightarrow}  V(\nu) & & \epsilon: & V(\nu) \hookrightarrow V(\delta). \\
\end{array}\]
It follows that $\iota^\prime \circ \iota \neq 0$ so $\text{Hom}(V(\gamma),V(\delta)) \neq 0$. We also obtain the following nonsplit extension
\[\begin{tikzcd}[ampersand replacement=\&]
0 \ar[r] \& V(\gamma) \ar[r, "{\tiny\left[\begin{array}{c} \iota \\ \overline{\iota} \\ q  \end{array}\right]}"]
    \&  V(\mu)\oplus V(\overline{\mu})\oplus V(\nu) \ar[r, "{\tiny\left[\begin{array}{r} -\iota^\prime\\ \overline{\iota}^\prime \\ \epsilon  \end{array}\right]^T}"]  \& V(\delta) \ar[r] \& 0.
\end{tikzcd}\] In the special case that $j_1 = j_2$, removing $V(\nu)$ and the maps $q$ and $\epsilon$ from the above diagram produces a nonsplit extension. Thus, $\text{Ext}^1(V(\delta),V(\gamma)) \neq 0$.

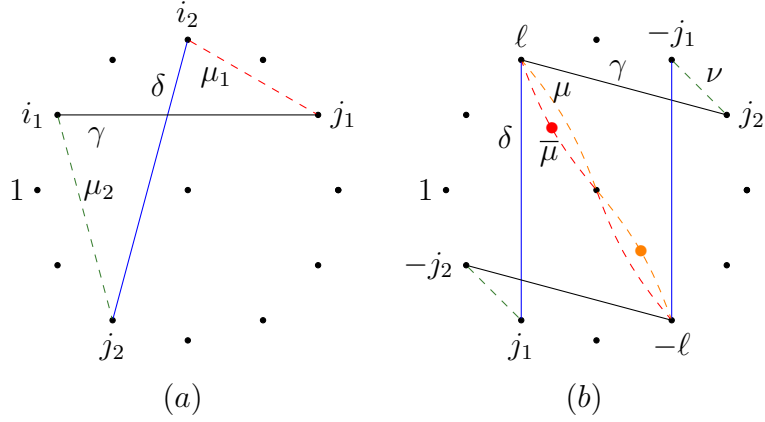
\begin{figure}
\[\begin{array}{cccc}
\begin{tikzpicture}
\draw [fill] circle (1pt);

	\foreach \a in {0,30,...,360} {
		\draw[fill] (\a:2cm) circle (1pt); 
		}
	
	\node[inner sep=0.5, fill=none, label=above:{}] (a) at (0:0) {};
	\node[inner sep=0.5, fill=none, label=right:{$j_1$}] (b) at (30:2) {};
	\node[inner sep=0.5, fill=none, label=above:{}] (c) at (60:2) {};
	\node[inner sep=0.5, fill=none, label=above:{$i_2$}] (d) at (90:2) {};
	\node[inner sep=0.5, fill=none, label=left:{}] (e) at (120:2) {};
	\node[inner sep=0.5, fill=none, label=left:{$i_1$}] (f) at (150:2) {};
	\node[inner sep=0.5, fill=none, label=left:{$1$}] (g) at (180:2) {};
	\node[inner sep=0.5, fill=none, label=left:{}] (h) at (210:2) {};
	\node[inner sep=0.5, fill=none, label=below:{$j_2$}] (i) at (240:2) {};
	\node[inner sep=0.5, fill=none, label=right:{}] (j) at (270:2){};
	\node[inner sep=0.5, fill=none, label=below:{}] (k) at (300:2) {};
	\node[inner sep=0.5, fill=none, label=right:{}] (l) at (330:2) {};
	\node[inner sep=0.5, fill=none, label=right:{}] (m) at (360:2) {};

\path[every node/.style={font=\sffamily\small}]
		(d) edge[color=blue] node [midway] {} (i);	
		\path[every node/.style={font=\sffamily\small}]
		(b) edge[color=black] node [midway] {} (f);

\path[every node/.style={font=\sffamily\small}]
		(f) edge[color=OliveGreen, dashed] node [midway] {} (i);
	\path[every node/.style={font=\sffamily\small}]
		(d) edge[color=red, dashed] node [midway] {} (b);

	\node[inner sep=0.5, fill=none, label=center:{$\gamma$}] at (150:1.4) {};
	\node[inner sep=0.5, fill=none, label=left:{$\delta$}] at (95:1.4) {};

	\node[inner sep=0.5, fill=none, label=right:{$\mu_1$}] at (90:1.5) {};	
	\node[inner sep=0.5, fill=none, label=left:{$\mu_2$}] at (180:0.8) {};

\end{tikzpicture} &
\begin{tikzpicture}
\draw [fill] circle (1pt);

	\foreach \a in {0,30,...,360} {
		\draw[fill] (\a:2cm) circle (1pt); 
		}
	
	\node[inner sep=0.5, fill=none, label=above:{}] (a) at (0:0) {};
	\node[inner sep=0.5, fill=none, label=right:{$j_2$}] (b) at (30:2) {};
	\node[inner sep=0.5, fill=none, label=above:{$-j_1$}] (c) at (60:2) {};
	\node[inner sep=0.5, fill=none, label=below:{}] (d) at (90:2) {};
	\node[inner sep=0.5, fill=none, label=above:{$\ell$}] (e) at (120:2) {};
	\node[inner sep=0.5, fill=none, label=left:{}] (f) at (150:2) {};
	\node[inner sep=0.5, fill=none, label=left:{$1$}] (g) at (180:2) {};
	\node[inner sep=0.5, fill=none, label=left:{$-j_2$}] (h) at (210:2) {};
	\node[inner sep=0.5, fill=none, label=below:{$j_1$}] (i) at (240:2) {};
	\node[inner sep=0.5, fill=none, label=right:{}] (j) at (270:2){};
	\node[inner sep=0.5, fill=none, label=below:{$-\ell$}] (k) at (300:2) {};
	\node[inner sep=0.5, fill=none, label=right:{}] (l) at (330:2) {};
	\node[inner sep=0.5, fill=none, label=right:{}] (m) at (360:2) {};

\path[every node/.style={font=\sffamily\small}]
		(e) edge[color=blue] node [midway] {} (i);	
\path[every node/.style={font=\sffamily\small}]
		(e) edge[color=black] node [midway] {} (b);
\path[every node/.style={font=\sffamily\small}]
		(c) edge[color=blue] node [midway] {} (k);
		\path[every node/.style={font=\sffamily\small}]
		(h) edge[color=black] node [midway] {} (k);

\path[every node/.style={font=\sffamily\small}]
		(h) edge[color=OliveGreen, dashed] node [midway] {} (i);
	\path[every node/.style={font=\sffamily\small}]
		(c) edge[color=OliveGreen, dashed] node [midway] {} (b);

\path[every node/.style={font=\sffamily\small}]
		(a) edge[color=red, dashed, bend left=10] node [midway] {$\bullet$} (e);	
		\path[every node/.style={font=\sffamily\small}]
		(a) edge[color=red, dashed, bend right=10] node [midway] {} (k);		
		
		\path[every node/.style={font=\sffamily\small}]
		(a) edge[color=orange, dashed, bend right=10] node [midway] {} (e);	
		\path[every node/.style={font=\sffamily\small}]
		(a) edge[color=orange, dashed, bend left=10] node [midway] {$\bullet$} (k);

	\node[inner sep=0.5, fill=none, label=center:{$\delta$}] at (150:1.4) {};
	\node[inner sep=0.5, fill=none, label=right:{$\gamma$}] at (90:1.6) {};

	\node[inner sep=0.5, fill=none, label=right:{$\mu$}] at (120:1.5) {};	
	\node[inner sep=0.5, fill=none, label=left:{$\overline{\mu}$}] at (124:0.6) {};
	
	\node[inner sep=0.5, fill=none, label=right:{$\nu$}] at (50:2) {};
	
\end{tikzpicture}\\
(a) & (b)
\end{array}\]
\caption{Here, $1\le i_1 < i_2 < j_1 \le n$ and $j_2 \in \{-1, \ldots, -(n-1)\}$. In $(a)$, we show $\gamma$ and $\delta$ when $i_1 \neq i_2$; the endpoint $j_1$ is allowed to equal $n$. In $(b)$, we show $\gamma$ and $\delta$ when $i_1 = i_2$. In $(b)$, if $j_1 = -j_2$, the equivalence class $[\nu]$ does not appear. }
\label{fig_bad2}
\end{figure}

\textit{Case~\ref{bad3}}: Figure~\ref{fig_bad3} shows the possible configurations of curves that can appear in this case. In the situation of Figure~\ref{fig_bad3}$(a)$, we have the following morphisms
\[\begin{array}{rccrcccccccccc}
p: & V(\gamma) {\twoheadrightarrow} V(\nu) & & q: & V(\nu) \twoheadrightarrow V(\delta)\\
p^\prime: & V(\gamma) {\twoheadrightarrow} V(\mu) & & q^\prime: & V({\mu}) \twoheadrightarrow V(\delta). \\
\end{array}\]
This implies that $q\circ p \neq 0$ so $\text{Hom}(V(\gamma), V(\delta)) \neq 0$. We have also the following nonsplit extension
\[\begin{tikzcd}[ampersand replacement=\&]
0 \ar[r] \& V(\gamma) \ar[r, "{\tiny\left[\begin{array}{c} p \\ p^\prime \end{array}\right]}"]
    \&  V(\nu)\oplus V({\mu}) \ar[r, "{\tiny\left[\begin{array}{r} -q\\ q^\prime \end{array}\right]^T}"]  \& V(\delta) \ar[r] \& 0,
\end{tikzcd}\]
which shows that $\text{Ext}^1(V(\delta), V(\gamma)) \neq 0$.

\begin{figure}
\[\begin{array}{cccccc}
\begin{tikzpicture}
\draw [fill] circle (1pt);

	\foreach \a in {0,30,...,360} {
		\draw[fill] (\a:2cm) circle (1pt); 
		}
	
	\node[inner sep=0.5, fill=none, label=above:{}] (a) at (0:0) {};
	\node[inner sep=0.5, fill=none, label=right:{}] (b) at (30:2) {};
	\node[inner sep=0.5, fill=none, label=above:{}] (c) at (60:2) {};
	\node[inner sep=0.5, fill=none, label=above:{$i_2$}] (d) at (90:2) {};
	\node[inner sep=0.5, fill=none, label=left:{$i_1$}] (e) at (120:2) {};
	\node[inner sep=0.5, fill=none, label=left:{}] (f) at (150:2) {};
	\node[inner sep=0.5, fill=none, label=left:{$1$}] (g) at (180:2) {};
	\node[inner sep=0.5, fill=none, label=left:{$j_2$}] (h) at (210:2) {};
	\node[inner sep=0.5, fill=none, label=below:{$j_1$}] (i) at (240:2) {};
	\node[inner sep=0.5, fill=none, label=right:{}] (j) at (270:2){};
	\node[inner sep=0.5, fill=none, label=below:{}] (k) at (300:2) {};
	\node[inner sep=0.5, fill=none, label=right:{}] (l) at (330:2) {};
	\node[inner sep=0.5, fill=none, label=right:{}] (m) at (360:2) {};

\path[every node/.style={font=\sffamily\small}]
		(e) edge[color=black] node [midway] {} (i);	
		\path[every node/.style={font=\sffamily\small}]
		(d) edge[color=blue] node [midway] {} (h);

\path[every node/.style={font=\sffamily\small}]
		(h) edge[color=OliveGreen, dashed] node [midway] {} (e);
	\path[every node/.style={font=\sffamily\small}]
		(d) edge[color=red, dashed] node [midway] {} (i);

	\node[inner sep=0.5, fill=none, label=center:{$\gamma$}] at (127:1.4) {};
	\node[inner sep=0.5, fill=none, label=left:{$\delta$}] at (99:1.4) {};

	\node[inner sep=0.5, fill=none, label=center:{$\mu$}] at (88:1.5) {};	
	\node[inner sep=0.5, fill=none, label=center:{$\nu$}] at (180:1.7) {};

\end{tikzpicture} &
\begin{tikzpicture}
\draw [fill] circle (1pt);

	\foreach \a in {0,30,...,360} {
		\draw[fill] (\a:2cm) circle (1pt); 
		}
	
	\node[inner sep=0.5, fill=none, label=above:{}] (a) at (0:0) {};
	\node[inner sep=0.5, fill=none, label=right:{}] (b) at (30:2) {};
	\node[inner sep=0.5, fill=none, label=above:{}] (c) at (60:2) {};
	\node[inner sep=0.5, fill=none, label=above:{$i_2$}] (d) at (90:2) {};
	\node[inner sep=0.5, fill=none, label=left:{$i_1$}] (e) at (120:2) {};
	\node[inner sep=0.5, fill=none, label=left:{}] (f) at (150:2) {};
	\node[inner sep=0.5, fill=none, label=left:{$1$}] (g) at (180:2) {};
	\node[inner sep=0.5, fill=none, label=left:{$j_2$}] (h) at (210:2) {};
	\node[inner sep=0.5, fill=none, label=below:{}] (i) at (240:2) {};
	\node[inner sep=0.5, fill=none, label=below:{$j_1$}] (j) at (270:2){};
	\node[inner sep=0.5, fill=none, label=below:{}] (k) at (300:2) {};
	\node[inner sep=0.5, fill=none, label=right:{}] (l) at (330:2) {};
	\node[inner sep=0.5, fill=none, label=right:{}] (m) at (360:2) {};

\path[every node/.style={font=\sffamily\small}]
		(e) edge[color=black] node [midway] {} (j);	
		\path[every node/.style={font=\sffamily\small}]
		(d) edge[color=blue] node [midway] {} (h);

\path[every node/.style={font=\sffamily\small}]
		(h) edge[color=OliveGreen, dashed] node [midway] {} (e);
	\path[every node/.style={font=\sffamily\small}]
		(d) edge[color=red, dashed, bend right=10] node [midway] {} (a);		
		
\path[every node/.style={font=\sffamily\small}]
		(j) edge[color=red, dashed, bend left=10] node [midway] {$\bullet$} (a);	
		
		\path[every node/.style={font=\sffamily\small}]
		(d) edge[color=orange, dashed, bend left=10] node [midway] {$\bullet$} (a);		
		
\path[every node/.style={font=\sffamily\small}]
		(j) edge[color=orange, dashed, bend right=10] node [midway] {} (a);

	\node[inner sep=0.5, fill=none, label=left:{$\gamma$}] at (180:0.5) {};
	\node[inner sep=0.5, fill=none, label=left:{$\delta$}] at (99:1.4) {};

	\node[inner sep=0.5, fill=none, label=center:{$\overline{\mu}$}] at (80:1.5) {};	
	\node[inner sep=0.5, fill=none, label=left:{$\mu$}] at (90:0.5) {};	
	\node[inner sep=0.5, fill=none, label=center:{$\nu$}] at (180:1.7) {};

\end{tikzpicture} &
\begin{tikzpicture}
\draw [fill] circle (1pt);

	\foreach \a in {0,30,...,360} {
		\draw[fill] (\a:2cm) circle (1pt); 
		}
	
	\node[inner sep=0.5, fill=none, label=below:{$-n$}] (a) at (0:0) {};
	\node[inner sep=0.5, fill=none, label=right:{}] (b) at (30:2) {};
	\node[inner sep=0.5, fill=none, label=above:{}] (c) at (60:2) {};
	\node[inner sep=0.5, fill=none, label=above:{$i_2$}] (d) at (90:2) {};
	\node[inner sep=0.5, fill=none, label=left:{$i_1$}] (e) at (120:2) {};
	\node[inner sep=0.5, fill=none, label=left:{}] (f) at (150:2) {};
	\node[inner sep=0.5, fill=none, label=left:{$1$}] (g) at (180:2) {};
	\node[inner sep=0.5, fill=none, label=left:{$j_2$}] (h) at (210:2) {};
	\node[inner sep=0.5, fill=none, label=below:{}] (i) at (240:2) {};
	\node[inner sep=0.5, fill=none, label=right:{}] (j) at (270:2){};
	\node[inner sep=0.5, fill=none, label=below:{}] (k) at (300:2) {};
	\node[inner sep=0.5, fill=none, label=right:{}] (l) at (330:2) {};
	\node[inner sep=0.5, fill=none, label=right:{}] (m) at (360:2) {};

\path[every node/.style={font=\sffamily\small}]
		(e) edge[color=black] node [midway] {$\bullet$} (a);	
		\path[every node/.style={font=\sffamily\small}]
		(d) edge[color=blue] node [midway] {} (h);

\path[every node/.style={font=\sffamily\small}]
		(h) edge[color=OliveGreen, dashed] node [midway] {} (e);
	\path[every node/.style={font=\sffamily\small}]
		(d) edge[color=red, dashed] node [midway] {} (a);

	\node[inner sep=0.5, fill=none, label=center:{$\gamma$}] at (127:1.4) {};
	\node[inner sep=0.5, fill=none, label=left:{$\delta$}] at (99:1.4) {};

	\node[inner sep=0.5, fill=none, label=right:{$\mu$}] at (88:1.5) {};	
	\node[inner sep=0.5, fill=none, label=center:{$\nu$}] at (180:1.7) {};

\end{tikzpicture} \\ 
(a) & (b) & (c)
\end{array}\]
\caption{Here, $1 \le i_1 < i_2 \le n-1$. In $(a)$, we require that $i_2 \neq -j_1$ $-(n-1) \le j_2 < j_1 \le -1$. In addition, it can happen that the centroid is properly contained in the region bounded by $\gamma$, $\delta$, and $\mu$. In $(b)$, we require that $i_2 = -j_1$ and $-(n-1) \le j_2 < j_1$. In $(c)$, we require that $j_2 = -n$ and $-(n-1) \le j_2 < -i_2$.}
\label{fig_bad3}
\end{figure}
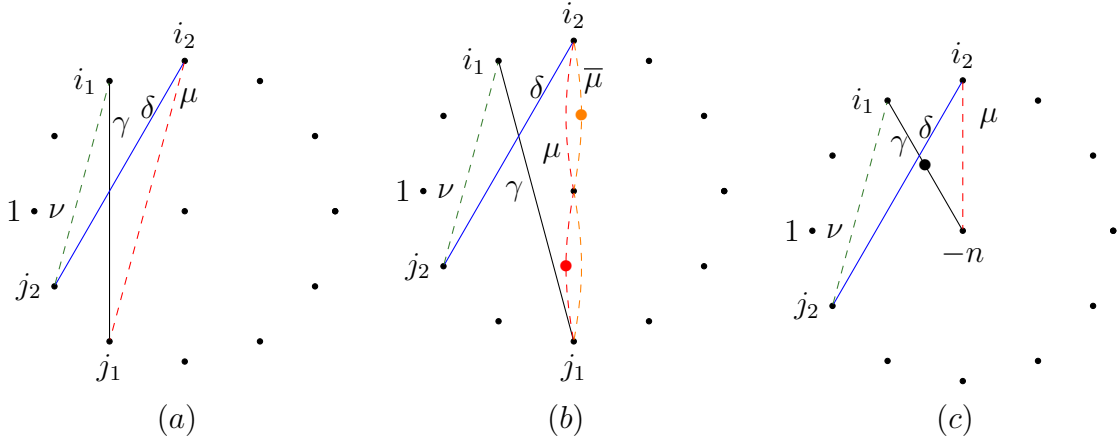

In the situation of Figure~\ref{fig_bad3}$(b)$, we have the following morphisms
 \[\begin{array}{rccrccccccccc}
p: & V(\gamma) {\twoheadrightarrow} V(\nu) & & q: & V(\nu) \twoheadrightarrow V(\delta)\\
q_1: & V(\gamma) {\twoheadrightarrow} V({\mu}) & & \iota: & V({\mu}) \hookrightarrow V(\delta)   \\
\overline{q_1}: & V(\gamma) {\twoheadrightarrow}  V(\overline{\mu}) & & \overline{\iota}: & V(\overline{\mu}) \hookrightarrow V(\delta). \\
\end{array}\]
We obtain that $q\circ p \neq 0$ so $\text{Hom}(V(\gamma),V(\delta)) \neq 0$. We also obtain the nonsplit extension
\[\begin{tikzcd}[ampersand replacement=\&]
0 \ar[r] \& V(\gamma) \ar[r, "{\tiny\left[\begin{array}{c} p \\ q_1 \\ \overline{q_1}  \end{array}\right]}"]
    \&  V(\nu)\oplus V({\mu})\oplus V(\overline{\mu}) \ar[r, "{\tiny\left[\begin{array}{r} q\\ \iota \\ -\overline{\iota}  \end{array}\right]^T}"]  \& V(\delta) \ar[r] \& 0,
\end{tikzcd}\] 
which shows that $\text{Ext}^1(V(\delta), V(\gamma)) \neq 0$.

Lastly, in the situation of Figure~\ref{fig_bad3}$(c)$, we have the following morphisms
\[\begin{array}{rccrcccccccccc}
\iota: & V(\gamma) {\hookrightarrow} V(\nu) & & p^\prime: & V(\nu) \twoheadrightarrow V(\delta)\\
p: & V(\gamma) {\twoheadrightarrow} V(\mu) & & \iota^\prime: & V({\mu}) \hookrightarrow V(\delta). \\
\end{array}\]
We observe that $\iota^\prime\circ p \neq 0$ so $\text{Hom}(V(\gamma),V(\delta)) \neq 0$. We also have the nonsplit extension
\[\begin{tikzcd}[ampersand replacement=\&]
0 \ar[r] \& V(\gamma) \ar[r, "{\tiny\left[\begin{array}{c} \iota \\ p \end{array}\right]}"]
    \&  V(\nu)\oplus V({\mu}) \ar[r, "{\tiny\left[\begin{array}{r} -p^\prime\\ \iota^\prime \end{array}\right]^T}"]  \& V(\delta) \ar[r] \& 0,
\end{tikzcd}\]
which show that $\text{Ext}^1(V(\delta), V(\gamma)) \neq 0$. This completes the proof.\end{proof}

\begin{lem}\label{no_intersection_lemma}
Let $[\gamma]$ and $[\delta]$ be two equivalence classes that do not form a bad pair and whose curves have no common endpoints. Then $(V(\gamma),V(\delta))$ and $(V(\delta), V(\gamma))$ are exceptional sequences.
\end{lem}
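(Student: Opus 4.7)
The plan is to invoke the Euler form characterization established in the previous lemma: for indecomposable representations $U,V$ of $Q^n$, the sequence $(U,V)$ is exceptional if and only if $\langle \mathbf{dim}(V), \mathbf{dim}(U)\rangle = 0$. Hence it suffices to prove that both $\langle \mathbf{dim}(V(\gamma)), \mathbf{dim}(V(\delta))\rangle = 0$ and $\langle \mathbf{dim}(V(\delta)), \mathbf{dim}(V(\gamma))\rangle = 0$.

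I would proceed by case analysis on the geometric configuration of the essential curves $\gamma$ and $\delta$ in $\Sigma_n$, in the spirit of the case analysis used in Lemma~\ref{common_endpt_lemma}. Since $\gamma$ is noncrossing with both $\delta$ and $\delta^\prime$, shares no endpoints with either, and $[\gamma],[\delta]$ are not a bad pair, a standard planar argument on the disk $\Sigma_n$ reduces the possibilities to three families: (i) the supports of $V(\gamma)$ and $V(\delta)$ are disjoint, i.e.\ the two curves occupy disjoint regions of $\Sigma_n$; (ii) one curve is nested inside the region bounded by the other and a boundary arc, so one support properly contains the other; (iii) one or both curves have the centroid $\pm n$ as an endpoint, giving analogous disjoint-or-nested configurations adapted to the puncture.

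In case (i), the noncrossing condition together with the disjointness of the supports forces them to be separated by a vertex not lying in either support; hence no arrow $\alpha \in Q^n_1$ has $s(\alpha)$ in one support and $t(\alpha)$ in the other. Both the vertex sum $\sum_k u_k v_k$ and the arrow sum $\sum_\alpha u_{s(\alpha)}v_{t(\alpha)}$ in the Euler form vanish in each ordering, giving the result immediately. In cases (ii) and (iii), I would write out the dimension vectors explicitly using Definition~\ref{def:2} and verify that the Euler form telescopes to zero: each vertex $k$ common to both supports contributes $+1$ (or $+2$ on the two-dimensional portion of case b)) to the vertex sum, and this is exactly cancelled by an arrow contribution $u_{s(\alpha)} v_{t(\alpha)}$ coming from an arrow whose source or target sits on the boundary of the smaller support. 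Swapping the roles of $\gamma$ and $\delta$ yields the same cancellation, so both Euler forms vanish.

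The main obstacle I anticipate is the bookkeeping in the subcases where one (or both) of the essential curves crosses the branch region of $Q^n$ near vertex $n-2$ and the associated representation acquires two-dimensional components, as in Definition~\ref{def:2}(b). Here the telescoping must be checked separately on the one-dimensional ``tail'' of the support and on the two-dimensional ``doubled'' region near $\{n-2,n-1,n\}$. Crucially, the noncrossing hypothesis together with the assumption $[\delta] \neq [\overline{\gamma}]$ forces the two-dimensional parts of $V(\gamma)$ and $V(\delta)$ to be compatibly nested, so that the $+2$ contributions in the vertex sum are matched one-for-one by $+2$ arrow contributions at the branch point, and the Euler form still collapses to zero.
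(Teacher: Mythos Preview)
Your approach is essentially the same as the paper's: both reduce to checking $\langle \mathbf{dim}(V(\gamma)),\mathbf{dim}(V(\delta))\rangle=\langle \mathbf{dim}(V(\delta)),\mathbf{dim}(V(\gamma))\rangle=0$ via the Euler-form criterion from the preceding lemma, and both dispatch this by a case analysis on the configuration of the essential curves. The paper organises the cases by the sign pattern of the endpoints of $\gamma$ and $\delta$ (three cases, with accompanying figures enumerating the sub-configurations) rather than by your disjoint/nested/puncture trichotomy, and then simply declares the Euler-form computation ``routine'' in the style of Lemma~\ref{common_endpt_lemma}; your telescoping description is a reasonable expansion of what that routine verification amounts to.
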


\begin{proof}
Assume that $\gamma$ and $\delta$ are the essential curves in the equivalence classes $[\gamma]$ and $[\delta]$, respectively. We prove the result in each of the following cases:
\begin{enumerate}
\item\label{good1} both endpoints of the curves $\gamma$ and $\delta$ are in $\{1, \ldots, n\}$ or the curve $\gamma$ has both its endpoints in $\{1,\ldots, n-1\}$ and the curve $\delta$ has one endpoint in $\{1,\ldots, n-1\}$ and one endpoint is $-n$;
\item\label{good2} both endpoints of the curve $\gamma$ are in $\{1,\ldots, n\}$, one endpoint of $\delta$ is in $\{1, \ldots, n-1\}$, and the other is in $\{-1,\ldots, -(n-1)\}$;
\item\label{good3} the curves $\gamma$ and $\delta$ each have one endpoint in $\{1,\ldots, n-1\}$, the curve $\gamma$ has one endpoint in $\{-1,\ldots, -(n-1)\}$, and the curve $\delta$ has one endpoint in $\{-1,\ldots, -n\}$.
\end{enumerate} 

For Case~\ref{good1}, we show the possible configurations of curves in Figure~\ref{fig_good1}. Similarly, we show the possible configurations of curves for Case~\ref{good2} and Case~\ref{good3} in Figure~\ref{fig_good2} and Figure~\ref{fig_good3}, respectively. As is done in the proof of Lemma~\ref{common_endpt_lemma}, it is routine to verify in each case that \[\langle \textbf{dim}(V(\gamma)), \textbf{dim}(V(\delta)) \rangle = \textbf{dim}(V(\delta)), \textbf{dim}(V(\gamma)) \rangle = 0.\]

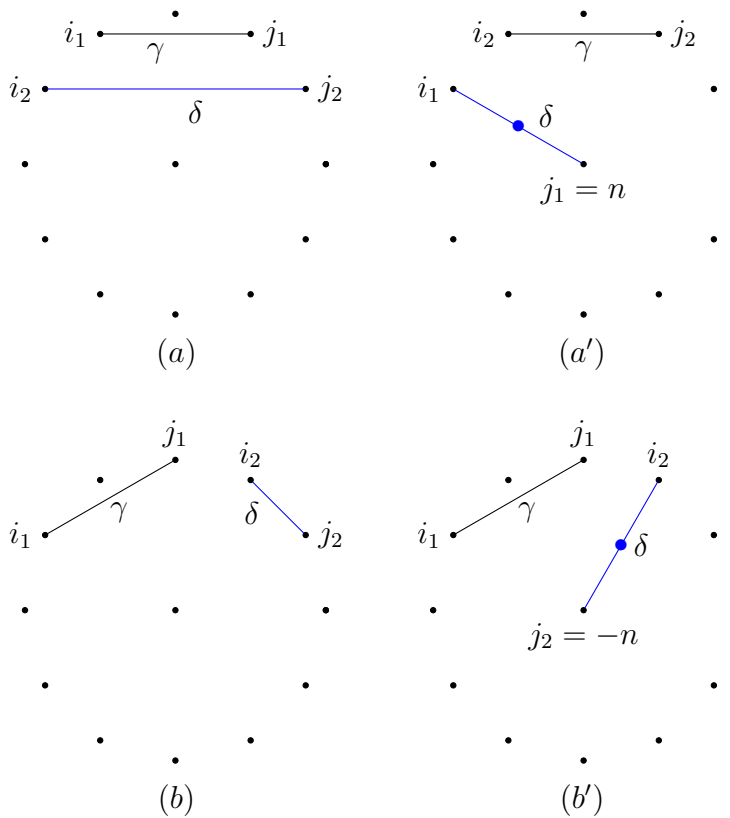
\begin{figure}[!htbp]
\[\begin{array}{ccccc} \begin{tikzpicture}
\draw [fill] circle (1pt);

	\foreach \a in {0,30,...,360} {
		\draw[fill] (\a:2cm) circle (1pt); 
		}
	
	\node[inner sep=0.5, fill=none, label=above:{}] (a) at (0:0) {};
	\node[inner sep=0.5, fill=none, label=right:{$j_2$}] (b) at (30:2) {};
	\node[inner sep=0.5, fill=none, label=right:{$j_1$}] (c) at (60:2) {};
	\node[inner sep=0.5, fill=none, label=below:{}] (d) at (90:2) {};
	\node[inner sep=0.5, fill=none, label=left:{$i_1$}] (e) at (120:2) {};
	\node[inner sep=0.5, fill=none, label=left:{$i_2$}] (f) at (150:2) {};
	\node[inner sep=0.5, fill=none, label=left:{}] (g) at (180:2) {};
	\node[inner sep=0.5, fill=none, label=left:{}] (h) at (210:2) {};
	\node[inner sep=0.5, fill=none, label=below:{}] (i) at (240:2) {};
	\node[inner sep=0.5, fill=none, label=right:{}] (j) at (270:2){};
	\node[inner sep=0.5, fill=none, label=below:{}] (k) at (300:2) {};
	\node[inner sep=0.5, fill=none, label=right:{}] (l) at (330:2) {};
	\node[inner sep=0.5, fill=none, label=right:{}] (m) at (360:2) {};

\path[every node/.style={font=\sffamily\small}]
		(c) edge[color=black] node [midway] {} (e);	
\path[every node/.style={font=\sffamily\small}]
		(b) edge[color=blue] node [midway] {} (f);
		
	\node[inner sep=0.5, fill=none, label=right:{$\delta$}] at (90:0.7) {};
	\node[inner sep=0.5, fill=none, label=center:{$\gamma$}] at (100:1.5) {};
\end{tikzpicture} & & \begin{tikzpicture}
\draw [fill] circle (1pt);

	\foreach \a in {0,30,...,360} {
		\draw[fill] (\a:2cm) circle (1pt); 
		}
	
	\node[inner sep=0.5, fill=none, label=below:{$j_1=n$}] (a) at (0:0) {};
	\node[inner sep=0.5, fill=none, label=above:{}] (b) at (30:2) {};
	\node[inner sep=0.5, fill=none, label=right:{$j_2$}] (c) at (60:2) {};
	\node[inner sep=0.5, fill=none, label=below:{}] (d) at (90:2) {};
	\node[inner sep=0.5, fill=none, label=left:{$i_2$}] (e) at (120:2) {};
	\node[inner sep=0.5, fill=none, label=left:{$i_1$}] (f) at (150:2) {};
	\node[inner sep=0.5, fill=none, label=left:{}] (g) at (180:2) {};
	\node[inner sep=0.5, fill=none, label=left:{}] (h) at (210:2) {};
	\node[inner sep=0.5, fill=none, label=below:{}] (i) at (240:2) {};
	\node[inner sep=0.5, fill=none, label=right:{}] (j) at (270:2){};
	\node[inner sep=0.5, fill=none, label=below:{}] (k) at (300:2) {};
	\node[inner sep=0.5, fill=none, label=right:{}] (l) at (330:2) {};
	\node[inner sep=0.5, fill=none, label=right:{}] (m) at (360:2) {};

\path[every node/.style={font=\sffamily\small}]
		(f) edge[color=blue] node [midway] {$\bullet$} (a);	
\path[every node/.style={font=\sffamily\small}]
		(c) edge[color=black] node [midway] {} (e);
		
	\node[inner sep=0.5, fill=none, label=right:{$\delta$}] at (140:1) {};
	\node[inner sep=0.5, fill=none, label=center:{$\gamma$}] at (90:1.5) {};
\end{tikzpicture} \\ (a) & & (a^\prime) \\ \\
 \begin{tikzpicture}
\draw [fill] circle (1pt);

	\foreach \a in {0,30,...,360} {
		\draw[fill] (\a:2cm) circle (1pt); 
		}
	
	\node[inner sep=0.5, fill=none, label=below:{}] (a) at (0:0) {};
	\node[inner sep=0.5, fill=none, label=right:{$j_2$}] (b) at (30:2) {};
	\node[inner sep=0.5, fill=none, label=above:{$i_2$}] (c) at (60:2) {};
	\node[inner sep=0.5, fill=none, label=above:{$j_1$}] (d) at (90:2) {};
	\node[inner sep=0.5, fill=none, label=above:{}] (e) at (120:2) {};
	\node[inner sep=0.5, fill=none, label=left:{$i_1$}] (f) at (150:2) {};
	\node[inner sep=0.5, fill=none, label=left:{}] (g) at (180:2) {};
	\node[inner sep=0.5, fill=none, label=left:{}] (h) at (210:2) {};
	\node[inner sep=0.5, fill=none, label=below:{}] (i) at (240:2) {};
	\node[inner sep=0.5, fill=none, label=right:{}] (j) at (270:2){};
	\node[inner sep=0.5, fill=none, label=below:{}] (k) at (300:2) {};
	\node[inner sep=0.5, fill=none, label=right:{}] (l) at (330:2) {};
	\node[inner sep=0.5, fill=none, label=right:{}] (m) at (360:2) {};

\path[every node/.style={font=\sffamily\small}]
		(c) edge[color=blue] node [midway] {} (b);	
\path[every node/.style={font=\sffamily\small}]
		(d) edge[color=black] node [midway] {} (f);
		
	\node[inner sep=0.5, fill=none, label=center:{$\gamma$}] at (120:1.5) {};
	\node[inner sep=0.5, fill=none, label=right:{$\delta$}] at (60:1.5) {};
\end{tikzpicture}
& & \begin{tikzpicture}
\draw [fill] circle (1pt);

	\foreach \a in {0,30,...,360} {
		\draw[fill] (\a:2cm) circle (1pt); 
		}
	
	\node[inner sep=0.5, fill=none, label=below:{$j_2=-n$}] (a) at (0:0) {};
	\node[inner sep=0.5, fill=none, label=above:{}] (b) at (30:2) {};
	\node[inner sep=0.5, fill=none, label=above:{$i_2$}] (c) at (60:2) {};
	\node[inner sep=0.5, fill=none, label=above:{$j_1$}] (d) at (90:2) {};
	\node[inner sep=0.5, fill=none, label=above:{}] (e) at (120:2) {};
	\node[inner sep=0.5, fill=none, label=left:{$i_1$}] (f) at (150:2) {};
	\node[inner sep=0.5, fill=none, label=left:{}] (g) at (180:2) {};
	\node[inner sep=0.5, fill=none, label=left:{}] (h) at (210:2) {};
	\node[inner sep=0.5, fill=none, label=below:{}] (i) at (240:2) {};
	\node[inner sep=0.5, fill=none, label=right:{}] (j) at (270:2){};
	\node[inner sep=0.5, fill=none, label=below:{}] (k) at (300:2) {};
	\node[inner sep=0.5, fill=none, label=right:{}] (l) at (330:2) {};
	\node[inner sep=0.5, fill=none, label=right:{}] (m) at (360:2) {};

\path[every node/.style={font=\sffamily\small}]
		(c) edge[color=blue] node [midway] {$\bullet$} (a);	
\path[every node/.style={font=\sffamily\small}]
		(d) edge[color=black] node [midway] {} (f);
		
	\node[inner sep=0.5, fill=none, label=center:{$\gamma$}] at (120:1.5) {};
	\node[inner sep=0.5, fill=none, label=right:{$\delta$}] at (60:1) {};
\end{tikzpicture}\\ (b) & & (b^\prime)  \end{array}\]
\caption{In $(a)$, we have $1 \le i_2 < i_1 < j_1 < j_2 \le n$. In $(a^\prime)$, we have $1 \le i_2 < i_1 < j_1 \le n-1$ and $j_2 = -n$. In $(b)$, we have $1 \le i_1 < j_1 < i_2 < j_2 \le n$. In $(b^\prime)$, we have $1 \le i_1 < j_1 < i_2 \le n-1$ and $j_2 = -n$.}
\label{fig_good1}
\end{figure}

\begin{figure}[!htbp]
\[\begin{array}{cccc}
 \begin{tikzpicture}
\draw [fill] circle (1pt);

	\foreach \a in {0,30,...,360} {
		\draw[fill] (\a:2cm) circle (1pt); 
		}
	
	\node[inner sep=0.5, fill=none, label=above:{}] (a) at (0:0) {};
	\node[inner sep=0.5, fill=none, label=right:{}] (b) at (30:2) {};
	\node[inner sep=0.5, fill=none, label=right:{$j_1$}] (c) at (60:2) {};
	\node[inner sep=0.5, fill=none, label=below:{}] (d) at (90:2) {};
	\node[inner sep=0.5, fill=none, label=left:{$i_1$}] (e) at (120:2) {};
	\node[inner sep=0.5, fill=none, label=left:{$i_2$}] (f) at (150:2) {};
	\node[inner sep=0.5, fill=none, label=left:{$1$}] (g) at (180:2) {};
	\node[inner sep=0.5, fill=none, label=left:{}] (h) at (210:2) {};
	\node[inner sep=0.5, fill=none, label=below:{$j_2$}] (i) at (240:2) {};
	\node[inner sep=0.5, fill=none, label=right:{}] (j) at (270:2){};
	\node[inner sep=0.5, fill=none, label=below:{}] (k) at (300:2) {};
	\node[inner sep=0.5, fill=none, label=right:{}] (l) at (330:2) {};
	\node[inner sep=0.5, fill=none, label=right:{}] (m) at (360:2) {};

\path[every node/.style={font=\sffamily\small}]
		(c) edge[color=black] node [midway] {} (e);	
\path[every node/.style={font=\sffamily\small}]
		(i) edge[color=blue] node [midway] {} (f);
		
	\node[inner sep=0.5, fill=none, label=center:{$\delta$}] at (180:1.3) {};
	\node[inner sep=0.5, fill=none, label=center:{$\gamma$}] at (100:1.5) {};
\end{tikzpicture}& & \begin{tikzpicture}
\draw [fill] circle (1pt);

	\foreach \a in {0,30,...,360} {
		\draw[fill] (\a:2cm) circle (1pt); 
		}
	
	\node[inner sep=0.5, fill=none, label=above:{}] (a) at (0:0) {};
	\node[inner sep=0.5, fill=none, label=above:{}] (b) at (30:2) {};
	\node[inner sep=0.5, fill=none, label=above:{$i_2$}] (c) at (60:2) {};
	\node[inner sep=0.5, fill=none, label=above:{$j_1$}] (d) at (90:2) {};
	\node[inner sep=0.5, fill=none, label=above:{}] (e) at (120:2) {};
	\node[inner sep=0.5, fill=none, label=left:{$i_1$}] (f) at (150:2) {};
	\node[inner sep=0.5, fill=none, label=left:{}] (g) at (180:2) {};
	\node[inner sep=0.5, fill=none, label=left:{$j_2$}] (h) at (210:2) {};
	\node[inner sep=0.5, fill=none, label=below:{}] (i) at (240:2) {};
	\node[inner sep=0.5, fill=none, label=right:{}] (j) at (270:2){};
	\node[inner sep=0.5, fill=none, label=below:{}] (k) at (300:2) {};
	\node[inner sep=0.5, fill=none, label=right:{}] (l) at (330:2) {};
	\node[inner sep=0.5, fill=none, label=right:{}] (m) at (360:2) {};

\path[every node/.style={font=\sffamily\small}]
		(d) edge[color=black] node [midway] {} (f);	
\path[every node/.style={font=\sffamily\small}]
		(c) edge[color=blue] node [midway] {} (h);
		
	\node[inner sep=0.5, fill=none, label=right:{$\delta$}] at (150:1) {};
	\node[inner sep=0.5, fill=none, label=center:{$\gamma$}] at (100:1.5) {};
	\node[inner sep=0.5, fill=none, label=left:{$1$}] at (180:2) {};
\end{tikzpicture} \\
 (a) & & (b)
\end{array}\]
\caption{In $(a)$, we have $1 \le i_2 < i_1 < j_1 \le n$ and $-(n-1) \le j_2 \le -j_1$. In $(b)$, we have $1 \le i_1 < j_1 < i_2 < -j_2 \le n-1$ and $-(n-1) \le j_2 \le -i_2$.}
\label{fig_good2}
\end{figure}
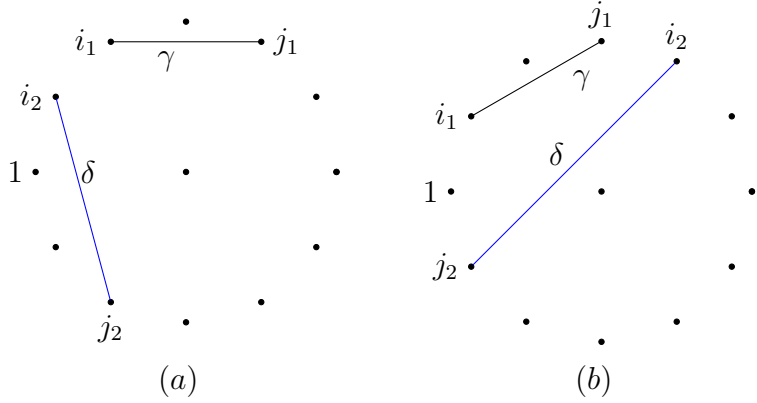

\begin{figure}[!htbp]
\[\begin{tikzpicture}
\draw [fill] circle (1pt);

	\foreach \a in {0,30,...,360} {
		\draw[fill] (\a:2cm) circle (1pt); 
		}
	
	\node[inner sep=0.5, fill=none, label=above:{}] (a) at (0:0) {};
	\node[inner sep=0.5, fill=none, label=above:{}] (b) at (30:2) {};
	\node[inner sep=0.5, fill=none, label=above:{}] (c) at (60:2) {};
	\node[inner sep=0.5, fill=none, label=above:{$i_2$}] (d) at (90:2) {};
	\node[inner sep=0.5, fill=none, label=above:{}] (e) at (120:2) {};
	\node[inner sep=0.5, fill=none, label=left:{$i_1$}] (f) at (150:2) {};
	\node[inner sep=0.5, fill=none, label=left:{}] (g) at (180:2) {};
	\node[inner sep=0.5, fill=none, label=left:{$j_1$}] (h) at (210:2) {};
	\node[inner sep=0.5, fill=none, label=below:{$j_2$}] (i) at (240:2) {};
	\node[inner sep=0.5, fill=none, label=right:{}] (j) at (270:2){};
	\node[inner sep=0.5, fill=none, label=below:{}] (k) at (300:2) {};
	\node[inner sep=0.5, fill=none, label=right:{}] (l) at (330:2) {};
	\node[inner sep=0.5, fill=none, label=right:{}] (m) at (360:2) {};

\path[every node/.style={font=\sffamily\small}]
		(d) edge[color=blue] node [midway] {} (i);	
\path[every node/.style={font=\sffamily\small}]
		(f) edge[color=black] node [midway] {} (h);
		
	\node[inner sep=0.5, fill=none, label=right:{$\delta$}] at (150:1) {};
	\node[inner sep=0.5, fill=none, label=center:{$\gamma$}] at (180:1.5) {};
	\node[inner sep=0.5, fill=none, label=left:{$1$}] at (180:2) {};
\end{tikzpicture} \]
\caption{Here, we have $1 \le i_1 < i_2 < -j_2$ and $-(n-1) \le j_1 < j_2 < -i_2$, or we have that $ 1 \le i_1 < i_2 \le n-1$ and $-(n-1) \le j_1 < -i_2$ and $j_2 = -n$.}
\label{fig_good3}
\end{figure}
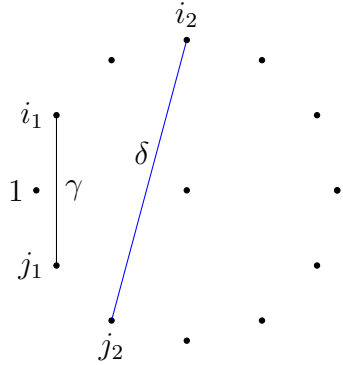
\end{proof}

\begin{proof}[Proof of Theorem~\ref{thm1}] To prove the theorem, it is enough to show that for any exceptional collection of curves $\{[\gamma_i]\}_{i = 1}^k$ the representations $\{V(\gamma_i)\}_{i=1}^k$ are an exceptional collection of representations and vice versa.

Let $\{[\gamma_i]\}_{i=1}^k$ be an exceptional collection of curves. We first show that $\{V(\gamma_i)\}_{i=1}^k$ is an exceptional collection of representations. Since $\{[\gamma_i]\}_{i=1}^k$ contains no bad pairs, Lemmas~\ref{common_endpt_lemma} and \ref{no_intersection_lemma} and Example~\ref{commuting_curves_fig} imply that for any $i,j \in \{1,\ldots, k\}$ at least one of $(V(\gamma_i),V(\gamma_j))$ or $(V(\gamma_j),V(\gamma_i))$ is an exceptional sequence.

We claim that there exists $i_1 \in \{1, \ldots, k\}$ such that $(V(\gamma_{i_1}), V(\gamma_i))$ is an exceptional sequence for all $i \in \{1, \ldots, k\}\backslash\{i_1\}$. This is clear if the curves $\{[\gamma_i]\}_{i = 1}^k$ do not form any cycles. On the other hand, suppose that there is a cycle formed by these curves.  Since $\{[\gamma_i]\}_{i=1}^k$ is an exceptional collection of curves, there is a unique pair of cycles, and it must be one of the configurations of the form shown in Figure~\ref{????}. 

Observe that the collection $\{[\gamma_i]\}_{i = 1}^k\backslash\{[\gamma]\}$ is also an exceptional collection of curves, and it does not have any cycles, where $[\gamma]$ is the equivalence class shown in Figure~\ref{????}. Therefore, there exists $[\delta] \in \{[\gamma_i]\}_{i = 1}^k\backslash\{[\gamma]\}$ such that $(V(\delta), V(\gamma_i))$ is an exceptional sequence for all $[\gamma_i] \in \{[\gamma_i]\}_{i = 1}^k\backslash\{[\gamma]\}$. If $[\delta]$ appears in one of the configurations in Figure~\ref{????}, then $(V(\gamma), V(\delta))$ is an exceptional sequence. So $[\gamma]$ is the desired equivalence class.

If  $[\delta]$ does not appear in Figure~\ref{????}, then $(V(\gamma), V(\delta))$ or $(V(\delta), V(\gamma))$ is an exceptional sequence. In the former case, $[\gamma]$ is the desired equivalence class. In the latter case, $[\delta]$ is the desired equivalence class. This establishes the claim.


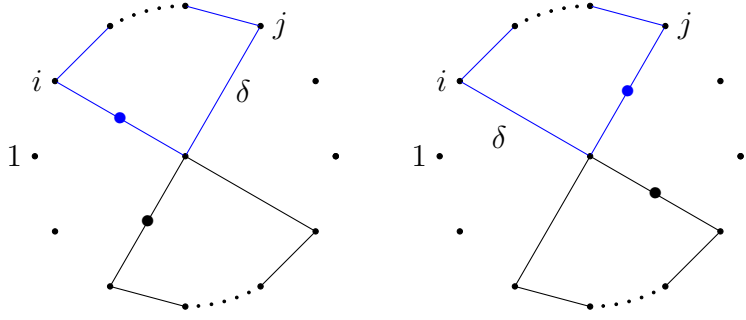
\begin{figure}[!htb]
\centering
\begin{tikzpicture}
\draw [fill] circle (1pt);
	
	\foreach \a in {0,30,...,360} {
		\draw[fill] (\a:2cm) circle (1pt); 
		}
		\foreach \a in {90,95,...,120} {
		\draw[fill] (\a:2cm) circle (0.5pt);
		}
		
		\foreach \a in {270,275,...,300} {
		\draw[fill] (\a:2cm) circle (0.5pt);
		}
\node[inner sep=0.5,fill=none,label=left:{$1$}]at (180:2) {};	

\node[inner sep=0.5,fill=none,label=left:{$i$}]at (150:2) {};	
\node[inner sep=0.5,fill=none,label=right:{$j$}]at (60:2) {};

\node[inner sep=0.5,fill=none,label=right:{{$\delta$}}] at (60:1) {};

	\node[inner sep=0.5, fill=none, label=right:{}] (a) at (60:2) {};
	\node[inner sep=0.5, fill=none, label=right:{}] (b) at (150:2) {};
	\node[inner sep=0.5, fill=none, label=right:{}] (c) at (120:2) {};
	\node[inner sep=0.5, fill=none, label=right:{}] (d) at (90:2) {};
	\node[inner sep=0.5, fill=none, label=below:{}] (e) at (0:0) {};

	\path[every node/.style={font=\sffamily\small}] 
	(a) edge[color=blue] node [right] {} (e);
	\path[every node/.style={font=\sffamily\small}] 
	(b) edge[color=blue] node [midway] {$\bullet$} (e);
	\path[every node/.style={font=\sffamily\small}] 
	(c) edge[color=blue] node [right] {} (b);
	\path[every node/.style={font=\sffamily\small}] 
	(d) edge[color=blue] node [right] {} (a);
	
	\node[inner sep=0.5, fill=none, label=right:{}] (a) at (60:-2) {};
	\node[inner sep=0.5, fill=none, label=right:{}] (b) at (150:-2) {};
	\node[inner sep=0.5, fill=none, label=right:{}] (c) at (120:-2) {};
	\node[inner sep=0.5, fill=none, label=right:{}] (d) at (90:-2) {};
	\node[inner sep=0.5, fill=none, label=below:{}] (e) at (0:0) {};
	
	\path[every node/.style={font=\sffamily\small}] 
	(a) edge[color=black] node [midway] {$\bullet$} (e);
	\path[every node/.style={font=\sffamily\small}] 
	(b) edge[color=black] node [midway] {} (e);
	\path[every node/.style={font=\sffamily\small}] 
	(c) edge[color=black] node [right] {} (b);
	\path[every node/.style={font=\sffamily\small}] 
	(d) edge[color=black] node [right] {} (a);
	
\end{tikzpicture} \ \ \ \ \ 
\begin{tikzpicture}
\draw [fill] circle (1pt);
	
	\foreach \a in {0,30,...,360} {
		\draw[fill] (\a:2cm) circle (1pt); 
		}
		\foreach \a in {90,95,...,120} {
		\draw[fill] (\a:2cm) circle (0.5pt);
		}
		
		\foreach \a in {270,275,...,300} {
		\draw[fill] (\a:2cm) circle (0.5pt);
		}
\node[inner sep=0.5,fill=none,label=left:{$1$}]at (180:2) {};	

\node[inner sep=0.5,fill=none,label=left:{$i$}]at (150:2) {};	
\node[inner sep=0.5,fill=none,label=right:{$j$}]at (60:2) {};

\node[inner sep=0.5,fill=none,label=right:{{$\delta$}}] at (170:1.5) {};

	\node[inner sep=0.5, fill=none, label=right:{}] (a) at (60:2) {};
	\node[inner sep=0.5, fill=none, label=right:{}] (b) at (150:2) {};
	\node[inner sep=0.5, fill=none, label=right:{}] (c) at (120:2) {};
	\node[inner sep=0.5, fill=none, label=right:{}] (d) at (90:2) {};
	\node[inner sep=0.5, fill=none, label=below:{}] (e) at (0:0) {};
	
	\path[every node/.style={font=\sffamily\small}] 
	(a) edge[color=blue] node [midway] {$\bullet$} (e);
	\path[every node/.style={font=\sffamily\small}] 
	(b) edge[color=blue] node [midway] {} (e);
	\path[every node/.style={font=\sffamily\small}] 
	(c) edge[color=blue] node [right] {} (b);
	\path[every node/.style={font=\sffamily\small}] 
	(d) edge[color=blue] node [right] {} (a);
	
	\node[inner sep=0.5, fill=none, label=right:{}] (a) at (60:-2) {};
	\node[inner sep=0.5, fill=none, label=right:{}] (b) at (150:-2) {};
	\node[inner sep=0.5, fill=none, label=right:{}] (c) at (120:-2) {};
	\node[inner sep=0.5, fill=none, label=right:{}] (d) at (90:-2) {};
	\node[inner sep=0.5, fill=none, label=below:{}] (e) at (0:0) {};
	
	\path[every node/.style={font=\sffamily\small}] 
	(a) edge[color=black] node [midway] {} (e);
	\path[every node/.style={font=\sffamily\small}] 
	(b) edge[color=black] node [midway] {$\bullet$} (e);
	\path[every node/.style={font=\sffamily\small}] 
	(c) edge[color=black] node [right] {} (b);
	\path[every node/.style={font=\sffamily\small}] 
	(d) edge[color=black] node [right] {} (a);
	
\end{tikzpicture}
\caption{The two possible types of pairs of cycles that may appear in an exceptional collection of curves. These configurations are considered up to the action of $\varrho$ on all curves in the configuration.}
\label{????}
\end{figure}

Let $V^1 = V(\gamma_{i_1})$. Now, inductively, let ${i_p} \in \{1, \ldots, k\}\backslash\{i_1, \ldots, i_{p-1}\}$ be an index such that the pair $(V(\gamma_{i_p}), V(\gamma_i))$ is an exceptional sequence for all $i \in \{1, \ldots, k\}\backslash\{i_1,\ldots, i_{p-1}\}$. Setting $V^p := V(\gamma_{i_p})$ for all $p \in \{1, \ldots, k\}$, we obtain that $(V^1, \ldots, V^k)$ is an exceptional sequence, as desired.

Next, we show any exceptional collection of representations $\{V^i\}_{i = 1}^k$ gives rise to an exceptional collection of curves. Without loss of generality, we assume that $(V(\gamma_i))_{i = 1}^k$ is an exceptional sequence where $V^i = V(\gamma_i)$ for all $i \in \{1, \ldots, k\}$. This implies that $(V(\gamma_i), V(\gamma_j))$ is an exceptional sequence for all $i$ and $j$ satisfying $i < j$. Therefore, Lemma~\ref{bad_pair_classification} implies that the set $\{[\gamma_i]\}_{i=1}^k$ does not contain any bad pairs.

Suppose the curves $\{[\gamma_i]\}_{i=1}^k$ have a configuration of the form shown in Figure~\ref{fig:forbidden_cconfiguration}. Without loss of generality, assume that the configuration is of the form shown in the left image of Figure~\ref{fig:forbidden_cconfiguration}, and let $\gamma_{i_1}$, $\gamma_{i_2}$, and $\gamma_{i_3}$ be the curves connecting $k_1$ to $n$, $k_2$ to $-n$, and $k_3$ to $n$, respectively. By Lemma~\ref{common_endpt_lemma}, we must have that $(V(\gamma_{i_1}),V(\gamma_{i_3})$, $(V(\gamma_{i_2}), V(\gamma_{i_1}))$, and $(V(\gamma_{i_3}), V(\gamma_{i_2}))$ must be exceptional sequences, and each pair of these representations taken in the opposite order is not an exceptional sequence. However, this implies that these three representations cannot be totally ordered in a way that previous an exceptional sequences consisting of all three. This contradicts that  $(V(\gamma_i))_{i = 1}^k$ is an exceptional sequence. 


Suppose that the curves $\{[\gamma_i]\}_{i=1}^k$ form a cycle that is not of the form shown in Figure~\ref{fig_special_configuration}. Because the collection $\{[\gamma_i]\}_{i=1}^k$ does not contain any bad pairs, it contains a cycle of curves none of which have a dot in its interior. Since this cycle is embedded in the plane, it encloses a region of the plane. Orient this cycle so that one travels counterclockwise around this region. Thus,  $[\gamma_{i_{s+1}}]$ is clockwise from $[\gamma_{i_{s}}]$ for all $s \in \{1, \ldots, p\}$, interpreting the indices cyclically.   This implies that $V(\gamma_{i_s})$ precedes in $V(\gamma_{i_{s+1}})$ in the exceptional sequence for all $s \in \{1, \ldots, p\}$, interpreting the indices cyclically. However, this contradicts that $(V(\gamma_i))_{i = 1}^k$ is an exceptional sequence. Consequently, no such cycle is formed by the curves $\{[\gamma_i]\}_{i=1}^k$. 

Lastly, we show that the curves $\{[\gamma_i]\}_{i=1}^k$ form at most one configuration of the form shown in Figure~\ref{fig_special_configuration}. Suppose two such configurations exist. Therefore, these curves must also form a cycle that is not of the form shown in Figure~\ref{fig_special_configuration} or a configuration of the form shown in Figure~\ref{fig:forbidden_cconfiguration}. This is a contradiction. We conclude that $\{[\gamma_i]\}_{i=1}^k$ is an exceptional collection of curves. This completes the proof.
\end{proof}

\begin{proof}[Proof of Theorem~\ref{thm:2}]
To prove the theorem, it is enough to show that for any exceptional sequence of curves $([\gamma_i])_{i = 1}^k$ the representations $(V(\gamma_i))_{i=1}^k$ are an exceptional sequence of representations and vice versa.

We first show that if $s,t \in \{1,\ldots, k\}$ satisfy $s < t$, then $(V(\gamma_s), V(\gamma_t))$ is an exceptional sequence, which implies that $(V(\gamma_i))_{i=1}^k$  is an exceptional sequence. By Theorem~\ref{thm1}, we know that $\{V(\gamma_i)\}_{i=1}^k$ is an exceptional collection of curves. So $[\gamma_s]$ and $[\gamma_t]$ do not form a bad pair.

If $[\gamma_s]$ and $[\gamma_t]$ have no common endpoints, then Lemma~\ref{no_intersection_lemma} implies that $(V(\gamma_s),V(\gamma_t))$ is an exceptional sequence. 

Next, assume that $[\gamma_s]$ and $[\gamma_t]$ have exactly one common endpoint. Assume that $[\gamma_s]$ and $[\gamma_t]$ share an endpoint $\ell$. Since $([\gamma_i])_{i = 1}^k$ is an exceptional sequence of curves, Lemma~\ref{common_endpt_lemma} implies that $(V(\gamma_s), V(\gamma_t))$ is an exceptional sequence of representations.

Now, assume $[\gamma_s]$ and $[\gamma_t]$ have at least two common endpoints. Since $[\gamma_s]$ and $[\gamma_t]$ do not form a bad pair, it must be the case that $[\gamma_t] = [\overline{\gamma}_s]$. By Example~\ref{commuting_curves_fig}, we know that  $(V(\gamma_s),V(\gamma_t))$ is an exceptional sequence. We obtain that $(V(\gamma_i))_{i=1}^k$ is an exceptional sequence.

Lastly, suppose that $(V^i)_{i=1}^k$ is an exceptional sequence of representations. Write $V^i = V(\gamma_i)$ for each $i \in \{1, \ldots, k\}$. Since $\{[\gamma_i]\}_{i = 1}^k$ is an exceptional collection of curves, there are no bad pairs among the classes $\{[\gamma_i]\}_{i = 1}^k$. We reduce to considering any pair of representations $V(\gamma_s)$ and $V(\gamma_t)$ with $s, t \in \{1, \ldots, k\}$ where $s < t$ and where $(V(\gamma_t), V(\gamma_s))$ is not an exceptional sequence. In this situation, since $\{[\gamma_i]\}_{i = 1}^k$ contains no bad pairs, the classes $[\gamma_s]$ and $[\gamma_t]$ have exactly one common endpoint. From Lemma~\ref{common_endpt_lemma}, we see that $[\gamma_s]$ and $[\gamma_t]$ are ordered in a way that is consistent with the definition  of an exceptional sequence of curves. We obtain that $([\gamma_i])_{i = 1}^k$ is an exceptional sequence of curves.
\end{proof}

\section{Proof of Theorem~\ref{thm:5.1}}\label{sec_thm_3_proof}

To prove Theorem~\ref{thm:5.1}, we recall some basic facts about the Coxeter groups of type $D_n$. The \textit{Grothendieck group} of a quiver $Q$ is the free abelian group $\mathcal{K}_0(Q)$ with basis given by the isomorphism classes $\widetilde{V}$ of representations $V$ of $Q$. The relations are given by $\widetilde{V} = \widetilde{U} + \widetilde{W}$ whenever there is an extension $0 \to U \to V \to W \to 0$. The dimension vector map is a group isomorphism $\textbf{dim}: \mathcal{K}_0(Q) \to \mathbb{Z}^{|Q_0|}$, and the simple representations of $Q$ give a basis of $\mathcal{K}_0(Q)$.

Let $\mathcal{V} = \mathcal{K}_0(Q)\otimes \mathbb{R}$, and endow this vector space with the symmetrized Euler form (i.e., $(\alpha, \beta) := \langle \alpha, \beta\rangle + \langle \beta, \alpha \rangle$ for any $\alpha, \beta \in \mathcal{V}$). The dimension vectors of simple representations of $Q$ give the basis $\{e_i\}_{i = 1}^{|Q_0|}$ of $\mathcal{V}$, via the isomorphism $\mathcal{K}_0(Q) \simeq \mathbb{Z}^{|Q_0|}$. 

A vector $v \in \mathcal{V}$ is a \textit{positive root} if $(v, v) = 2$ and $v$ is a non-negative integer combination of the $e_i$s. Each positive root gives rise to a reflection defined by \[s_v(w) = w - (v,w)v.\] Let $W$ be the group generated by these reflections. It can be shown that the reflections $s_i := s_{e_i}$ generate $W$, and the data $(W,\{s_i\}_{i = 1}^{|Q_0|})$ is a Coxeter system (see \cite[II.5.1]{humphreys1990reflection}).

For the remainder of the paper, $W$ will denote the Coxeter group $W$ that is associated with the quiver $Q^n$. In this case, the indecomposable representations are in bijection with the reflections in $W$ via the map $V \mapsto s_{V} := s_{\textbf{dim}(V)}$. The following lemma is a straightforward consequence of the definition of $V(\gamma)$.

\begin{lem}\label{factorization_lemma}
Let $[\gamma]$ be any equivalence class in $\text{Eq}(\Sigma_n)$ where $\gamma$ is the essential curve with endpoints $i$ and $j$ where $i > 0$. Then the reflection $s_{V(\gamma)} \in W$ may be expressed as follows
\[s_{V(\gamma)} = \left\{\begin{array}{l|l} s_{j-1}s_{j-2}\cdots s_{i+1}s_is_{i+1} \cdots s_{j-2}s_{j-1} & \text{$i < j \le n-1$}\\
s_ns_{n-2}\cdots s_{i+1}s_is_{i+1}\cdots s_{n-2}s_n & \text{$j = n$}\\
s_{n-1}s_{n-2}\cdots s_{i+1}s_is_{i+1}\cdots s_{n-2}s_{n-1} & \text{$j = -n$}\\
s_{-j}\cdots s_{n-2}s_ns_{n-1}s_{n-2}\cdots s_{i+1}s_is_{i+1}\cdots s_{n-2}s_{n-1}s_ns_{n-2}\cdots s_{-j} & \text{$-1 \le j \le -(n-2)$}\\
s_ns_{n-1}s_{n-2}\cdots s_{i+1}s_is_{i+1} \cdots s_{n-2}s_{n-1}s_n & \text{$j = -(n-1)$.}\end{array}\right.\]
\end{lem}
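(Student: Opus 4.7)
The plan is to reduce each of the five factorization claims to a direct computation of the action of a word in the simple reflections on a simple root. Each factorization in the statement has the form $w \, s_i \, w^{-1}$ for an explicit word $w$ in the simple reflections: for example, in Case 1 one has $w = s_{j-1} \cdots s_{i+1}$, and in Case 4 one has $w = s_{-j} \cdots s_{n-2} s_n s_{n-1} s_{n-2} \cdots s_{i+1}$. Using the standard Coxeter identity $w s_k w^{-1} = s_{w(e_k)}$ together with $s_{V(\gamma)} = s_{\textbf{dim}(V(\gamma))}$, it suffices to verify in each case that $w(e_i) = \textbf{dim}(V(\gamma))$.

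First I would record the action of simple reflections on simple roots in type $D_n$: $s_k(e_k) = -e_k$, $s_k(e_\ell) = e_\ell + e_k$ when $k$ and $\ell$ are adjacent in the $D_n$ Dynkin diagram, and $s_k(e_\ell) = e_\ell$ otherwise. Cases 1, 2, 3, and 5 are then handled by a short induction: applying the letters of $w$ to $e_i$ one at a time, at each step the newly applied $s_k$ acts on a partial sum containing exactly one diagram-neighbor of vertex $k$ (with coefficient $1$), so the effect is simply to append $e_k$ to the running sum. The resulting vector has all coordinates in $\{0,1\}$, supported exactly on the vertices specified by Definition~\ref{def:2}(a) or (c).

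The main obstacle is Case 4, where $\textbf{dim}(V(\gamma))$ has coordinate $2$ at each of the vertices $-j, -j+1, \ldots, n-2$. The critical step is the second occurrence of $s_{n-2}$ in $w$: at that moment the running vector $v$ contains $e_{n-2}$ with coefficient $1$ and all three of its diagram-neighbors $e_{n-3}, e_{n-1}, e_n$ with coefficient $1$, giving $(v, e_{n-2}) = 2 - 1 - 1 - 1 = -1$ and bumping the coefficient of $e_{n-2}$ to $2$. One then shows by induction that each subsequent $s_k$ for $k$ running from $n-3$ down to $-j$ is applied to a vector in which $e_k$ has coefficient $1$, its diagram-neighbor $e_{k-1}$ has coefficient $1$, and its diagram-neighbor $e_{k+1}$ has coefficient $2$, again yielding $(v, e_k) = -1$ and producing the next coefficient $2$. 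Tracking this bookkeeping recovers the dimension vector predicted by Definition~\ref{def:2}(b) and completes the proof.
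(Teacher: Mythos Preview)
Your proposal is correct and is exactly the kind of routine verification the paper has in mind; the paper itself offers no proof beyond the remark that the lemma ``is a straightforward consequence of the definition of $V(\gamma)$.'' Your use of the conjugation identity $w s_i w^{-1} = s_{w(e_i)}$ together with the step-by-step tracking of $w(e_i)$ against the dimension vectors in Definition~\ref{def:2} is the natural way to make that remark precise, and your handling of the coefficient-$2$ entries in Case~4 is accurate.
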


Following \cite{bjorner2006combinatorics}, the group $W$ may also be described as a certain subgroup of the symmetric group $\mathfrak{S}_{[n]^{\pm}}$ where $[n]^{\pm} := \{-1,-2, \ldots, -n, 1, 2, \ldots, n\}$. We review this now. Let $w \in \mathfrak{S}_{[n]^{\pm}}$ be any element with property that $w(-i) = -w(i)$ for any $i \in [n]^{\pm}$. We refer to such an element as a \textit{signed permutation}. We can write signed permutations in \textit{window notation} as $w = [a_1, \ldots, a_n]$ where $w(i) = a_i$. 

Let $\mathfrak{S}_n^D$ denote the group of all signed permutations $w \in \mathfrak{S}_{[n]^{\pm}}$ where the window notation for $w$ has even number of negative values (i.e., $|\{i \mid a_i < 0\}| = 2k$ for some $k \in \mathbb{Z}$ where $w = [a_1, \ldots, a_n]$). The reflections in $\mathfrak{S}_n^D$ are pairs of disjoint transpositions $((i,j)) := (i,j)(-i,-j)$ with $i \neq -j$. In general, we will refer to a product of disjoint cycles $(i_1, \ldots, i_k)(-i_1, \ldots, -i_k)$ as a \textit{paired cycle}, and we refer to a cycle $(i_1, \ldots, i_k, -i_1, \ldots, -i_k)$ as a \textit{balanced cycle}.

Next, by \cite[Proposition 8.2.3]{bjorner2006combinatorics}, there is a group isomorphism $\varphi: W \to \mathfrak{S}_n^D$ defined by 
\[\varphi(s_i) = \left\{\begin{array}{l|l}((i,i+i)) & i \neq n-1, n\\ ((n-1,n)) & i = n-1 \\ ((n-1,-n)) & i = n\end{array} \right.\]
and extended via the group multiplication. The following lemma follows from a case-by-case check using Lemma~\ref{factorization_lemma}.

\begin{lem}
Let $[\gamma]$ be any equivalence class in $\text{Eq}(\Sigma_n)$ where $\gamma$ is the essential curve with endpoints $i$ and $j$ where $i > 0$. Then $\varphi(s_{V(\gamma)}) = ((i,j)).$
\end{lem}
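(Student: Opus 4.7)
The plan is to establish $\varphi(s_{V(\gamma)}) = ((i,j))$ by a direct case-by-case computation, using the explicit reduced expressions for $s_{V(\gamma)}$ supplied by Lemma~\ref{factorization_lemma}. The key structural observation is that each of the five factorizations listed there has the palindromic form $w\,s_i\,w^{-1}$, where $w$ is a product of simple reflections $s_k$ with $k\ge i+1$. For instance, when $i<j\le n-1$ one has $w = s_{j-1}s_{j-2}\cdots s_{i+1}$, and when $-(n-2)\le j\le -1$ one has $w = s_{-j}\cdots s_{n-2}s_n s_{n-1}s_{n-2}\cdots s_{i+1}$; analogous expressions hold in the remaining three cases.

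Applying the homomorphism $\varphi$ and using the elementary conjugation identity $\sigma\cdot((a,b))\cdot\sigma^{-1} = ((\sigma(a),\sigma(b)))$ in $\mathfrak{S}_n^D$, the question reduces to computing the pair $(\varphi(w)(i),\,\varphi(w)(i+1))$. Because every simple reflection appearing in $w$ has index strictly greater than $i$, its image under $\varphi$ is a paired transposition whose entries have absolute value at least $i+1$, so $\varphi(w)(i)=i$ in every case. It therefore suffices to prove that $\varphi(w)(i+1) = j$ in each of the five cases.

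These verifications proceed by a direct right-to-left trace of $i+1$ through the sequence of paired transpositions comprising $\varphi(w)$. In each case the initial segment of simple paired transpositions $((i+1,i+2)), ((i+2,i+3)),\ldots, ((n-2,n-1))$ (or an appropriate truncation) carries $i+1$ upward through $i+2,i+3,\ldots$ to the terminal value of the ascent. Any factor $\varphi(s_{n-1})=((n-1,n))$ or $\varphi(s_n)=((n-1,-n))$ then transports the result through the centroid labels, and when a descending tail is present, the paired transpositions $((n-2,n-1)), ((n-3,n-2)),\ldots, ((-j,-j+1))$ carry the resulting negative index $-(n-1)$ down to $-(-j)=j$.

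The main obstacle will be the bookkeeping in the case $-(n-2)\le j\le -1$, where $w$ is longest and the trace of $i+1$ traverses the ascending segment, the sign flip induced by the composition $((n-1,n))((n-1,-n))$, and the descending segment in succession. No genuine conceptual obstruction arises beyond this case analysis, but care must be taken to verify that the descending paired transpositions terminate precisely at $((-j,-j+1))$ so that the trace arrives at $j$ rather than over- or undershooting.
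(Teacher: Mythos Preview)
Your proposal is correct and takes essentially the same approach as the paper, whose proof is simply the one-line remark that the lemma ``follows from a case-by-case check using Lemma~\ref{factorization_lemma}.'' Your observation that each factorization in Lemma~\ref{factorization_lemma} has the palindromic form $w\,s_i\,w^{-1}$, together with the conjugation identity $\sigma\cdot((a,b))\cdot\sigma^{-1}=((\sigma(a),\sigma(b)))$ and the fact that $\varphi(w)$ fixes $i$, is a clean organizing principle that makes the verification transparent and is exactly what the paper leaves implicit.
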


Before proving Theorem~\ref{thm:5.1}, we recall an alternative definition of the lattice of $D_n$ noncrossing partitions. For any $w \in W$, let $\ell(w)$ denote the minimum number $r$ such that $w$ can be expressed as a product of reflections in $W$. Define a poset $T^{D_n}$ on $W$ where $w \le w^\prime$ if $\ell(w^\prime) = \ell(w) + \ell(w^{-1}w^\prime)$. Now, fix the Coxeter element $c=s_1s_2\cdots s_{n-1}s_n$ of $W$. Note that \[\varphi(c) = (1, 2, \ldots, n-1, -1, -2, \ldots, -(n-1))(n,-n).\] 

By \cite[Theorem 1.1]{athanasiadis2004noncrossing}, the lattice of $D_n$ noncrossing partitions is isomorphic to the interval $[1, c] \subseteq T^{D_n}$ via the isomorphism
\[\begin{array}{rcl}
 [1,c] & {\longrightarrow} & NC^D(n)\\
 w & \mapsto & f(\varphi(w)).
\end{array}
\]
The map $f$, as introduced in \cite{athanasiadis2004noncrossing}, is defined by sending $\varphi(w)$ to the partition of $[n]^{\pm}$ whose nonzero blocks are formed by the paired cycles of $\varphi(w)$, and whose zero block (if it exists) is the union of the elements of all balanced cycles of $\varphi(w)$.

\begin{proof}[Proof of Theorem~\ref{thm:5.1}]
Let $(V^1, \ldots, V^k)$ be an exceptional sequence of representations of $Q^n$. By \cite[Lemma 3.9, 3.10]{ingalls2009noncrossing}, the element $s_{V^k}\cdots s_{V^1} \in W$ is a noncrossing partition in $[1,c]$. Moreover, \cite[Lemma 3.11]{ingalls2009noncrossing} implies that $s_{V^i} \cdots s_{V^1} < s_{V^j}\cdots s_{V^1}$ in $[1,c]$ for any $i, j \in \{1, \ldots, k\}$ with $i < j$. When $j = i+1$, the order relation in $[1,c]$ is a covering relation. Consequently, the map \[(V^1, \ldots, V^k) \mapsto (1, s_{V^1}, \ldots, s_{V^k}\cdots s_{V^1})\] defines a bijection between exceptional sequences of representations of $Q^n$ and saturated chains of $[1,c]$ that include the identity.

Combining the maps presented in this section with our bijection from Theorem~\ref{thm:2}, we obtain that the map \[([\gamma_1], \ldots, [\gamma_k]) \mapsto (\{\{i\}\}_{i \in [n]^{\pm}}, (f\circ \varphi)(s_{V(\gamma_1)}),\ldots, (f\circ \varphi)(s_{V(\gamma_k)} \cdots s_{V(\gamma_1)})) \] defines a bijection between exceptional sequences of curves on $\Sigma_n$ and saturated chains of $NC^D(n)$ that include the identity.
\end{proof}




\section*{Acknowledgements}{This project is the result of a CRM-ISM  summer scholarship at Universit\'e du Qu\'ebec \`a Montr\'eal. A. Garver thanks Jacob P. Matherne for interesting discussions on this project.}


\bibliography{sample}{}
\bibliographystyle{plain}


\end{document}